%#LATEX#E# - MultiEdit label should be on the 1st line in the file

\documentclass[10pt,twoside]{article}
\usepackage{amsfonts}
\usepackage{amscd}
\usepackage{amsmath}
\usepackage{verbatim}
\usepackage{amssymb}
\usepackage{indentfirst}
\pagestyle{plain}
 \textheight=547pt

%%%%%%%%%%%%%%%%%%%%%%%%%%%%%%%%%%%%%%%%%%%%%%%%%%%%%%%%%%%%%%%%%%%%%%%%%%%%%%%%%%%%%%%%%%%%%%%%%%%

\newtheorem{theorem}{Theorem}
\newtheorem{theorema}{Theorem}
\newtheorem{theoremb}{Theorem}

\newtheorem{cor}[theorem]{Corollary}

\newtheorem{dfn}[theorema]{Definition}

\newtheorem{lem}[theorem]{Lemma}

\newtheorem{prop}[theorem]{Proposition}
\newtheorem{remark}[theoremb]{Remark}

\newenvironment{proof}[1][Proof]{\textbf{#1.} }{\qed}

%%%%%%%%%%%%%%%%%%%%%%%%%% The Main MACROS%%%%%%%%%%%%%%%%%%%%%%%%%%%%%%%%

\newcommand\A{{\cal A}}
\renewcommand\a{\alpha}
\renewcommand\b{\beta}

\newcommand\C{{\cal C}}
\newcommand\CC{{\mathbb C}}

\renewcommand\d{\delta}
\newcommand\D{{\cal D}}

\newcommand\E{{\cal E}}

\newcommand\g{{\frak g}}

\renewcommand\ll{\lambda}
\newcommand\La{\Lambda}

\newcommand\oo{\omega}
\newcommand\op[1]{\mathop{\rm #1}\nolimits}
\newcommand\ot{\otimes}
\newcommand\p{\partial}

\newcommand\R{{\mathbb R}}

\renewcommand\t{\tau}

\newcommand\T{\Theta}

\newcommand\ve{\varepsilon}

\newcommand\vp{\varphi}

\newcommand\we{\wedge}

\newcommand\z{\sigma}
\newcommand\Z{{\mathbb Z}}

%%%%%%%%%%%% Special symbols %%%%%%%
\def\Rom#1{\uppercase\expandafter{\romannumeral#1}}

\newcommand\qed{\phantom{\underline{y}}\hfill\hfill$\square$}

\newcommand\bib[1]{\bibitem[#1]{#1}}

\renewcommand\mod{\hskip2.5pt\op{mod}}

\makeatletter
\renewcommand{\@evenhead}{\hfil Boris Kruglikov \ $\diamond$ \ Valentin Lychagin\hfil}
\renewcommand{\@oddhead}{\hfil Global Lie-Tresse theorem\hfil}
\renewcommand{\@begintheorem}[2]{\begin{trivlist}\it
 \item[\hspace{\labelsep}{\bf #1\ #2.}]}
\renewcommand{\@endtheorem}{\end{trivlist}}
\def\@seccntformat#1{\csname the#1\endcsname.\quad}
\def\numberline#1{\hb@xt@\@tempdima{#1\if&#1&\else.\fi\hfil}}
\makeatother

%%%%%%%%%%%%%%%%%%%% The end of the Main MACROS %%%%%%%%%%%%%%%%%%%%%%%

\begin{document}

\title{\bf Global Lie-Tresse theorem}
\author{\textsc{Boris Kruglikov and Valentin Lychagin}}
\date{}

\maketitle

\begin{abstract}
We prove a global algebraic version of the Lie-Tresse theorem which states that the
algebra of differential invariants of an algebraic pseudogroup action on a
differential equation is generated by a finite number of rational-polynomial
differential invariants and invariant derivations.
 \footnote{MSC numbers: 53A55, 58H10; 35A30, 58A20. Keywords:
algebraic group, pseudogroup action, rational invariant, differential invariant,
invariant derivation, Tresse derivative, differential syzygy, separation of orbits, Spencer cohomology.}%
\end{abstract}

%%%%%%%%%%%%%%%%%%%%%%%%%%%%%%%%%%%%%%%%%%%%%%%%%%%%%%%%%%%%%%%%%%%%%%%%%%
\section*{Introduction}\label{sec0}

According to the Erlangen program of F.\,Klein a geometry is characterized by the invariants
of a transitive transformation group. Finite generation property for
algebraic invariants was the topic of D.\,Hilbert's XIV problem.

For infinite groups of S.\,Lie and E.\,Cartan (as well as for usual Lie groups)
\cite{Li$_1$,C$_1$} (see also \cite{SS,S$_1$,AM})
the Noetherian property generally does not hold for the algebra of differential invariants
(of arbitrary order), and instead finiteness is guaranteed by the Lie-Tresse theorem,
which uses invariant functions and invariant derivations as generators.

This theorem is a phenomenological statement motivated by Lie and Tresse
\cite{Li$_2$,Tr$_1$}. It was rigorously proved for un-constrained actions of pseudogroups
(i.e.\ on regular strata in the space of jets) in \cite{Kum}, see also \cite{Ov,Ol$_1$,MMR,SSh}.
This was generalized for pseudogroup actions on differential equations
(which can be, for instance, singular strata of un-constrained actions) in \cite{KL$_2$}.
Algorithmic construction of differential invariants via Gr\"obner basis technique
(in the case of free actions) is done in \cite{OP}.

In all these approaches the regularity issue plays a significant role and
the generating property of the algebra $\A$ of differential invariants
(as well as the very definition of $\A$) holds {\em micro-locally\/},
i.e.\ on open domains in the space of infinite jets $J^\infty$ (usually these
domains are not Zariski open or $G$-invariant, and their jet-order and size
depend on the element of $\A$).

In the present paper we overcome this difficulty by considering {\it algebraic actions\/}
of Lie pseudogroups (essentially all known examples are such) and restricting to differential
invariants that are rational functions in jets of certain order $\le l$ and polynomial in higher jets.
 %($l$ is a fixed number to be described later).
These invariants will separate the regular orbits and will be finitely generated in the Lie-Tresse sense globally.

The set of singularities is algebraic in $J^\infty$, i.e.\ given by a finite number
of finite jet-order relations, specifying when the generators are not defined (denominators vanish)
or are dependent (some determinants vanish). This is one of our results on {\it stabilization
of singularities\/}. Then we prove the Lie-Tresse theorem in the complement to these singularities.
This global result is an enhancement of the micro-local smooth situation\footnote{In the local case
one restricts to a neighborhood $\hat U=\pi_{\infty,0}^{-1}(U)\subset J^\infty$ for an open set $U\subset M$,
while in the micro-local case $\hat U=\pi_{\infty,k}^{-1}(U)\subset J^\infty$
for some open set $U\subset J^k$.}
since the latter can be easily deduced by an application of the implicit function theorem.

{\it Rational differential invariants\/} are natural in the classification problems, and all
known examples are such (see the discussion in \S \ref{S43} about appearance of roots).
Our paper justifies this experimental observation.

It is important to stress that the separation property holds globally and this gives
a method to distinguish between different regular orbits of the pseudogroup actions.
This implies, in particular, a possibility to solve algebraic equivalence problems
via the differential-geometric technique (see e.g.\ \cite{BL}).

%%%%%%%%%%%%%%%%%%%%%%%%%%%%%%%%%%%%%%%%%%%%%%%%%%%%%%%%%%%%%%%%%%%%%%%%%%
\subsection{Algebraic pseudogroups}\label{S01}

A {\em Lie pseudogroup\/} $G\subset\op{Diff}_{\op{loc}}(M)$ acting on a
manifold $M$ consists of a collection of local diffeomorphisms
$\vp$, each bearing its own domain of definition $\op{dom}(\vp)$ and
range $\op{im}(\vp)$, that satisfies the following properties:
 \begin{enumerate}
  \item $\op{id}_M\in G$ and $\op{dom}(\op{id}_M)=\op{im}(\op{id}_M)=M$,
  \item If $\vp,\psi\in G$, then $\vp\circ\psi\in G$ whenever
$\op{dom}(\vp)\subset\op{im}(\psi)$,
  \item If $\vp\in G$, then $\vp^{-1}\in G$ and $\op{dom}(\vp^{-1})=\op{im}(\vp)$,
  \item $\vp\in G$ iff for every open subset $U\subset\op{dom}(\vp)$
the restriction $\left.\vp\right|_U\in G$,
  \item The pseudogroup is of order $l$ if this is the minimal number such
that $\vp\in G$ whenever for each point $a\in\op{dom}(\vp)$ the
$l$-jet is admissible: $[\vp]^l_a\in G^l$.
 \end{enumerate}

Property 5 means that the pseudogroup is defined by a
{\em Lie equation\/} of differential order $l$, i.e.\ the embedding
$G^l\subset J^l(M,M)$ uniquely determines the higher groupoids $\{G^k\}_{k\ge l}$ (defined below)
by the prolongation technique, and it even determines $G$.
Analytic pseudogroups transitive on $M$ are always Lie pseudogroups as
follows from the Cartan-Kuranishi theorem (see also Remark \ref{rem2} in \S \ref{S12}).
In most part of the text below we will simply write "pseudogroup".

In this paper we assume throughout that the action of $G$ on $M$ is {\it transitive\/},
i.e.\ any two points on $M$ can be superposed by a pseudogroup element.
In this case property 5 in the above list can be removed from the assumptions,
without altering any of our results (we however do not use this).
 % Here is a sketch of the proof.
 % (1) The equation for the Lie algebra sheaf is linear, so its prolongation-projection results in a subbundle
 % of $J^\infty(TM)$ over the whole $M$ due to transitivity. This output equation is formally integrable.
 % (2) The equation for the pseudogroup on every step of prolongation-projection gives a
 % formal transitive pseudogroup, i.e.\ every stabilizer $G_x^k$ is a groupoid, and since for a fixed $k$
 % the dimension of $G_x^k$ stabilizes to that of the Lie algebra sheaf truncated to the jet-level $k$,
 % and the reductions of the discrete components are no more than countable, we stabilize at a
 % formally integrable transitive pseudogroup $G$, which has all smoothness properties by (1).
 % (3) Let $f,g$ be two elements of the pseudogroup $G$ with the same infinite jets everywhere.
 % Then the composition $f^{-1}g$ is defined and has all derivatives at all points the same as the
 % identity map, whence it is the identity map. QED.
 % Remark: in computing differential invariants we need only the formal pseudogroup, so the last
 % step is unnecessary for the Lie-Tresse type theorem.
In the {\it intransitive case\/} some of our results still hold under additional assumptions.
 % In the case of intransitive action, the results of this paper hold micro-locally over
 % a regular orbit provided the local orbifold has a nice geometric structure.

A transformation $\vp\in G$ defines a map ($l$-th prolongation) of
the space of jets of dimension $n$ submanifolds
$\vp^{(l)}:J^l(M,n)\to J^l(M,n)$, which obeys the following
property:
 $$
(\vp\circ\psi^{-1})^{(l)}=\vp^{(l)}\circ(\psi^{(l)})^{-1}.
 $$
This action on $J^\infty(M,n)$ induces the action of $G$ on the space of functions
$\mathcal{F}(J^\infty(M,n))$, which is the main object of our study.
The alternatives for $\mathcal{F}$ are:
 % smooth functions $C^\infty$, rational functions $\mathfrak{R}$ or polynomials $\mathfrak{P}$ on $J^\infty$
smooth or analytic functions, rational functions or polynomials with respect to the
coordinates on the fibers of the bundle $J^\infty\to J^1$ (fibers of the bundle
$\pi_{i+1,i}:J^{i+1}\to J^i$ have a natural affine structure for $i\geq1$).
We will use a (reasonably minimal) combination of these.

Let us denote the stabilizer of order $k$ of the point $a\in M$ by $G^k_a=\{\vp\in G^k:\vp(a)=a\}$
(in the transitive case dependence on $a$ is not essential: these sub-groups are conjugate).
The group $G^k_a$ acts on the $k$-jets of dimension $n$ submanifolds
at $a$, which is an algebraic manifold $J_a^k(M,n)$, see \S\ref{S1} for details.

 \begin{dfn}
The $G$ action on $M$ is called algebraic if for the order $l$ of the pseudogroup
the stabilizer $G^l_a$ is an algebraic group acting algebraically on $J^l_a$.
 \end{dfn}
Actually we require that $G^l_a$ is an algebraic subgroup of the differential group $D^l_a$
(see \S\ref{S12} for more details).
It is then straightforward to check that the prolonged action of $G^k_a$ on $J^k$ is also
algebraic for all $k\ge l$.

We want more than this, namely we wish to consider a differential equation $\E$ as a submanifold in jets,
such that $\E^k\subset J^k$ is $G$-invariant for all $k$ (in the general setup, for which
our results are valid, the action of $G$ is defined only on $\E$ and is not induced by an ambient action).

In this case we have an action of $G^k_a$ on $\E^k_a=\pi_k^{-1}(a)$ and we call it {\it algebraic\/}
if $\E^k_a\subset J^k_a$ for any point $a\in M$ is an algebraic (nonsingular) submanifold on which
$G_a^k$ acts algebraically. We call $\E$ {\em algebraic differential equation\/} in this case.
Notice that this property concerns the behavior only with respect
to the derivatives, so for instance sin-Gordon $u_{xy}=\sin u$ or Liouville $u_{xy}=e^u$
are algebraic differential equations from this perspective.

Without loss of generality we can assume that the maximal order of $\E$ is $\le l$.
Then the assumption that $\E^l$ is algebraic and {\it irreducible\/}
implies the same property for the prolongations $\E^k=(\E^l)^{(k-l)}$ provided the system is
formally integrable (this is relevant for overdetermined systems only, see \S \ref{S11}).
These will be the standing assumptions of this paper.

%%%%%%%%%%%%%%%%%%%%%%%%%%%%%%%%%%%%%%%%%%%%%%%%%%%%%%%%%%%%%%%%%%%%%%%%%%
\subsection{Main results and discussion}\label{S02}

Like in the classical invariant theory, the pseudogroup actions possess
the algebra $\A$ of differential invariants, which are simply the invariant functions
of the prolonged action in jets $\vp^{(k)}\in\op{Diff}_\text{loc}(J^k)$, $\vp\in G$, for all $k$.
Generally the number of independent differential invariants is infinite.
But micro-locally $\A$ can be finitely generated via the Lie approach with a finite
number of invariant derivations and a finite number of differential invariants,
or via the Tresse method using differentiation of some invariants by the others (see \S\ref{S14}).

We call a closed subset $S\subset J^k$ {\it Zariski closed\/} if its intersection $S_a$ with every
fiber $J^k_a$, $a\in M$, is Zariski closed. The same concerns subsets $S\subset\E^k$.
If $S$ is $G$-invariant, it projects onto $M$ (the action is assumed transitive).
Formally integrable equations correspond to compatible systems and are discussed in \S1.

Our first result is the analog of the Cartan-Kuranishi theorem
(that regularity is guaranteed by a finite number of conditions) for pseudogroup actions.

 \begin{theorem}\label{Thm1}
Consider an algebraic action of a pseudogroup $G$ on a formally integrable irreducible
differential equation $\E$ over $M$. Suppose $G$ acts transitively on $M$.
Then there exists a number $l$ and a Zariski closed invariant proper subset $S_l\subset\E^l$ such that
the action is regular in $\pi_{\infty,l}^{-1}(\E^l\setminus S_l)\subset\E^\infty$, i.e.\
$\E^k\setminus\pi_{k,l}^{-1}(S_l)$  for any $k\ge l$ is algebraically fibered by
the orbits of $G^k$ (in particular the orbits are closed and have the same dimension).
In other words, there exists a rational geometric quotient
 $$
\bigl(\E^k\setminus\pi_{k,l}^{-1}(S_l)\bigr)/G^k\simeq Y^k.
 $$
 \end{theorem}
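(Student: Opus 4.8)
The plan is to combine finiteness results from algebraic group actions (applied fiberwise over $M$) with the prolongation structure of jet spaces and Spencer cohomology, so that the regularity conditions defined at a single finite order $l$ propagate to all higher orders. First I would work fiberwise: fix $a\in M$ and consider the algebraic group $G^l_a$ acting algebraically on the irreducible variety $\E^l_a$. By the Rosenlicht-type theorem on rational quotients for algebraic group actions, there is a $G^l_a$-invariant proper Zariski closed subset $\Sigma_a\subset\E^l_a$ outside which the orbits are closed, of constant dimension, and admit a geometric quotient that is a rational variety; moreover the "bad locus" is described by vanishing of finitely many rational invariants (denominators and rank-drop determinants). The transitivity of $G$ on $M$ lets one transport $\Sigma_a$ across the base consistently (the stabilizers at different points are conjugate), producing a Zariski closed invariant $S_l\subset\E^l$ that fibers over all of $M$; one should check that this can be done so that $S_l$ depends algebraically on $a$, using that the whole setup ($\E^l\subset J^l$, the groupoid $G^l$, the action) is algebraic over $M$.

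Next I would handle the prolongation step, which is the heart of the argument: show that regularity at level $l$ forces regularity at every level $k>l$ after removing only $\pi_{k,l}^{-1}(S_l)$, with no new singular conditions appearing. The mechanism is that the fibers of $\pi_{k+1,k}:\E^{k+1}\to\E^k$ are affine spaces modeled on the prolongation $g^{(k+1-l)}_a$ of the symbol, and the prolonged pseudogroup $G^{k+1}$ acts on these fibers affinely, covering the action of $G^k$ on the base. The relevant linear-algebra data — the dimension of the symbol prolongations and the fact that the symbol is involutive (hence no obstructions) past a certain order — is controlled by Spencer $\delta$-cohomology, and the Cartan-Kuranishi / formal integrability hypothesis guarantees these cohomology groups stabilize (vanish in the relevant bidegrees) from order $l$ on, possibly after enlarging $l$ once. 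Given this, an orbit of $G^{k+1}$ projects onto an orbit of $G^k$ with fiber an affine subspace of constant dimension, so closedness and constant orbit dimension lift inductively, and the quotient $Y_{k+1}$ is an affine bundle over $Y_k$, hence still a rational variety. The rational geometric quotient $\E^k\setminus\pi_{k,l}^{-1}(S_l)\to Y_k$ is then assembled by descent from the fiberwise quotients.

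The main obstacle I anticipate is precisely the uniformity of the prolongation step: one needs that a single finite order $l$ simultaneously (i) makes the base-fiber action regular in the algebraic-group sense, (ii) makes the symbol involutive so that all further prolongations are "free" affine extensions, and (iii) makes the relevant Spencer cohomology stabilize — and that these three thresholds can be reconciled into one $l$ by a finite enlargement. This is where the analog of Cartan-Kuranishi is genuinely used, and where formal integrability and irreducibility of $\E^l$ are essential (irreducibility is needed so that a proper Zariski closed singular subset is automatically "small" and its complement connected, so that constancy of orbit dimension is meaningful). A secondary technical point is ensuring $S_l$ is genuinely $G$-invariant and not merely $G^l_a$-invariant fiberwise — this requires checking that the gluing over $M$ respects the groupoid structure, which follows from transitivity but deserves a careful statement. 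Once these are in place, the existence of the rational geometric quotient $Y_k$ is a formal consequence of Rosenlicht's theorem applied fiberwise together with the affine-bundle tower structure.
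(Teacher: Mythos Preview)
Your outline matches the paper's approach: Rosenlicht applied fiberwise at a fixed level $l$, then an inductive argument showing $Y_{k+1}\to Y_k$ is an affine bundle for $k\ge l$ via Spencer cohomology and a Cartan--Kuranishi-type stabilization.

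One point needs sharpening, however. You attribute the constancy of orbit-fiber dimensions under prolongation to involutivity of the symbol $g$ of $\E$ and the formal integrability hypothesis; but involutivity of $g$ only controls the fibers of $\E^{k+1}\to\E^k$, not the $G$-orbits inside them. The paper handles this by introducing a \emph{separate} symbolic system $\varpi_k\subset g_k$ --- the symbol of the orbit itself, obtained from the ``automorphic equation'' $G_N$ (equivalently the linear Lie equation $\mathcal{R}_N$) --- and proving an independent stabilization theorem for $\varpi$: there exist $l$ and a Zariski-open set over which $\varpi_{k+1}=\varpi_k^{(1)}$ for all $k\ge l$. Only then does the exact sequence $0\to\varpi_k\to g_k\to\mathfrak{d}_k\to 0$, combined with involutivity of $g$, give by a diagram chase the vanishing of $H^{i,j}(\mathfrak{d})$ for $i\ge l$, which is what makes the quotient tower affine. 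Your obstacles paragraph is right that ``an analog of Cartan--Kuranishi is genuinely used''; the clarification is that it is applied to the orbit symbol $\varpi$, not to $g$, and this requires first constructing the orbit equation.
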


Consequently, we have the following stabilization of singularities: By Rosenlicht's theorem \cite{R$_1$}
(see \S \ref{S21}) the action on $\E^k$ has a geometric quotient outside a proper set $S_k$ of singularities
for all $k>0$. Then for some number $l$ (which can exceed the order of the pseudogroup $G$), we have:
$\pi_{k,l}(S_k)\subset S_l$ for $k\ge l$.

\smallskip

Denote by $\mathfrak{P}_l(\E)$ the algebra of functions on $\E^\infty$
(this means $\cup_k\pi_{\infty,k}^*(\mathcal{F}(\E^k))$)
that are smooth in the base variables, rational in the fiber variables of $\pi_l:J^l\to M$
and polynomial in the higher jets of order $k>l$.\footnote{Ultimately, when $l\to\infty$, we get
functions that are smooth in the base variables and rational on the fibers.
The theory applies to this class as well (and is simpler).}
We will look for differential invariants among such functions, which we call {\it rational-polynomial}.

Our second main result is the global Lie-Tresse theorem for pseudogroup actions on differential equations
(it includes the important case of un-constrained action, when $\E^l=J^l$ is the entire jet-space).

 \begin{theorem}\label{Thm2}
With the assumptions of the previous theorem, there exists a number $l$ and a Zariski closed
invariant proper subset $S\subset\E^l$ such that the algebra $\mathfrak{P}_l(\E)^G$ of differential
invariants separates the regular orbits from $\E^\infty\setminus\pi_{\infty,l}^{-1}(S)$ and
is finitely generated in the following sense.

There exists a finite number of functions $I_1,\dots,I_t\in\mathfrak{P}_l(\E)^G$ and a finite number
of rational invariant derivations $\nabla_1,\dots,\nabla_s:\mathfrak{P}_l(\E)^G\to \mathfrak{P}_l(\E)^G$
such that any function from $\mathfrak{P}_l(\E)^G$ is a polynomial of differential
invariants $\nabla_{J}I_i$, where $\nabla_J=\nabla_{j_1}\cdots\nabla_{j_r}$
for some multi-indices $J=(j_1,\dots,j_r)$, with coefficients being
rational functions of $I_i$.
 \end{theorem}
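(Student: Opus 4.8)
The plan is to reduce the global Lie-Tresse statement to a combination of (a) Theorem~\ref{Thm1}, which already provides a stable regular stratum $\E^\infty\setminus\pi_{\infty,l}^{-1}(S_l)$ with rational geometric quotients $Y_k$, and (b) a Spencer-cohomological argument controlling the prolongations, so that the algebra of invariants stabilizes in a finitely generated way. First I would fix $l$ large enough so that Theorem~\ref{Thm1} applies and, in addition, so that the symbol of the prolonged action has vanishing Spencer $\delta$-cohomology in the relevant bidegrees above order $l$ (this is the standard stabilization, guaranteed by the analog of the Cartan-Kuranishi / Poincar\'e $\delta$-lemma for the symbolic system of the Lie equation). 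The geometric quotient $Y_l$ is a rational algebraic variety whose field of rational functions pulls back to the rational differential invariants of order $l$; choose $I_1,\dots,I_{t_0}$ to be rational functions on $Y_l$ that generate its function field and separate its points (possible by Rosenlicht's theorem and primitive-element-type arguments for rational function fields of algebraic varieties).

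Next I would build the invariant derivations. Over the regular stratum the Cartan (total) derivatives $\mathcal D_{x^1},\dots,\mathcal D_{x^n}$ do not commute with $G$, but one can correct them: using $n$ sufficiently generic differential invariants $f_1,\dots,f_n$ (taken among the $I_i$ or their prolongations, chosen so the corresponding Tresse/Jacobian determinant is not identically zero on the stratum — this determinant's zero locus is Zariski closed and gets absorbed into $S$), the Tresse derivatives $\nabla_i = \sum_j (\p f/\p x)^{-1}_{ij}\,\mathcal D_{x^j}$ map invariants to invariants and are rational invariant derivations of the required type. Then I would show, fiberwise and order by order, that $\E^{k+1}$ as a bundle over $\E^k$ (restricted to the regular stratum) is, after dividing by $G^{k+1}$, an affine bundle over $Y_k$, and that a basis of its fiber coordinates is obtained by applying the $\nabla_i$ to the order-$k$ invariants; the key mechanism is that the symbol of the quotient map at level $k+1$ is exactly the prolonged symbol $g_{k+1}$ of the pseudogroup's Lie equation, and the Spencer $\delta$-lemma identifies its cokernel with $S^{k+1}T^*\ot(\text{invariant jet space})$ — the very thing spanned by the iterated Tresse derivatives of lower-order invariants. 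This is the step I expect to be the main obstacle: one must show that \emph{polynomial} (not merely local-analytic) invariants suffice at each prolongation level, i.e. that the affine-bundle trivialization of the quotient $(\E^{k+1}\setminus\cdots)/G^{k+1}\to Y_k$ can be taken algebraically and compatibly in $k$, so that the new generators at step $k+1$ are polynomials in the $\nabla_i$-images of the old ones with coefficients rational in the $I_j$.

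Finally I would assemble the induction: having fixed the finite set $I_1,\dots,I_t$ (the generators of the field of order-$l$ rational invariants together with enough higher-order invariants to make the symbol stabilization work and the Tresse determinant nonzero) and the derivations $\nabla_1,\dots,\nabla_s$ ($s=n$, or fewer if $\E$ constrains the $x$-derivatives), I would prove by induction on $k$ that every element of $\mathfrak P_l(\E)^G$ of jet-order $\le k$ lies in the $\mathrm{Frac}(\R[I_1,\dots,I_t])$-algebra generated by the $\nabla_J I_i$ with $|J|\le k-l$. The base case $k=l$ is the choice of the $I_i$; the inductive step uses the affine-bundle description above: an order-$(k+1)$ invariant is, along the fibers of $\E^{k+1}\to\E^k$, an affine function whose linear part is a combination of $\nabla_i$ of order-$k$ invariants and whose "constant term" has order $\le k$, so both parts are handled by the inductive hypothesis after clearing denominators that are rational in the $I_j$. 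Separation of regular orbits in $\E^\infty$ then follows because the $I_i$ already separate orbits in $Y_l$ and the $\nabla_J I_i$ recover all the fiber coordinates of the $Y_k$'s, so two points in the regular stratum with the same values of all $\nabla_J I_i$ project to the same point of every $Y_k$, hence lie on the same $G^k$-orbit for all $k$, hence on the same $G^\infty$-orbit. The singularity set $S\supset S_l$ is the union of $S_l$, the zero locus of the chosen Tresse determinant, and the (finitely many, by stabilization) loci where the denominators of the chosen generators vanish — a Zariski closed invariant proper subset of $\E^l$, as claimed.
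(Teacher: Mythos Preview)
Your approach is essentially the paper's: choose $l$ so that Theorem~\ref{Thm1} holds and the Spencer cohomology $H^{k,*}(\mathfrak{d})$ of the quotient symbolic system vanishes for $k\ge l$; use Rosenlicht plus the primitive element theorem to pick $I_1,\dots,I_t$ generating the field of rational order-$l$ invariants; then run induction on $k$, using surjectivity of $\mathfrak{d}_k^*\ot\t\to\mathfrak{d}_{k+1}^*$ (dual to injectivity of the Spencer $\d$) to show the $\nabla_i$ of order-$k$ invariants span the new fiber coordinates.

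Two places where the paper is more careful than your sketch. First, your construction of the invariant derivations relies on Tresse derivatives built from $n$ invariants with nonvanishing horizontal Jacobian, but the theorem does not assume $\E$ is ample, and the paper shows (Theorem~\ref{Thmn} and the examples in \S\ref{S42}) that without ampleness there may be strictly fewer than $n$ such derivations and Tresse derivatives need not exist. The paper instead invokes Theorem~\ref{tHmT}, which produces $s=\op{codim}\a^\vee$ invariant derivations dually from the invariant horizontal $1$-forms $\hat d f_i$, and these suffice because the symbols $v_i$ act surjectively on $\mathfrak{d}_k^*$; your parenthetical ``or fewer if $\E$ constrains the $x$-derivatives'' acknowledges the issue but does not supply the replacement mechanism. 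Second, in your inductive step you assert that an order-$(k+1)$ invariant is \emph{affine} along the fiber of $\E^{k+1}\to\E^k$; this is false as stated (it is polynomial of arbitrary degree in those fiber coordinates). The correct argument, which the paper gives, is that the $\nabla_i(g_\a)$ already span the affine coordinates on the quotient fiber (by the symbolic surjectivity), so that after splitting the fiber coordinates as $(p',p'')$ one shows no $p''$-dependence can occur at the symbolic level, and then expands $I$ polynomially in the $\nabla_i(g_\a)$ and checks via $L_{\hat X}$ that the coefficients are themselves invariants of lower order.
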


The number $l$ might not be the same as in Theorem \ref{Thm1} and is usually greater than the stabilization
jet-level of $G$ and $\E$. It is determined by the stabilization in cohomology as shown in Section \ref{S2}.
The set $S$ includes the previous singularity locus $S_l$ but it can be bigger (for instance, it can include
points where the rank of differentials of the generating invariants $\nabla_JI_i$ of order $\le l$ drop).
The number $t$ can be taken equal to the codimension of the regular orbit in $\E^l$ plus 1.
The number $s$ does not exceed the dimension of the affine complex characteristic variety of $\E$
(this notion is defined in Section \ref{S1}).

Let us remark that the derivations $\nabla_j$ not always can be taken as Tresse derivatives/horizontal
vector fields --- in Section \ref{S4} we show examples, where growth of the dimension of the algebra
of differential invariants on $\E^k$ is a polynomial in $k$ of degree $<n$,
and the number of invariant derivations is $s<n$.

In the above approach we remove not only the genuine singular orbits but also some
regular ones. We can minimize the amount of removed
regular orbits (shrink $S$) at the price of increasing the number of basic invariants $I_i$.
 % and possibly the number of derivations $\nabla_j$.

  \begin{remark}
It is not enough to consider only polynomial invariants, as readily follows from Nagata's counter-example
to Hilbert XIV: the algebra of polynomial invariants needs not be finitely-generated  \cite{PV}.
Hilbert's finiteness property holds for reductive groups, but the sub-groups $G^k_x$ arising in the
pseudogroup actions are seldom such. In addition, the polynomial invariants often do not separate
the regular orbits, that's why we need rational invariants
(however regular orbits can be separated by polynomial relative differential invariants).
% They have also the property of finite generation in the spirit of Lie-Tresse, but we will dwell upon this elsewhere.
  \end{remark}

Some other results proved in this paper are: finiteness theorems for
invariant derivations, differential forms and other natural geometric
objects (tensors, differential operators, connections etc). We also prove
finite generation property for differential syzygies and higher syzygies,
interpreting this as a $G$-invariant Cartan-Kuranishi theorem.

\smallskip

Let us remark that the results of the paper hold true for some other classes of functions.
Namely if the manifold $\E^l$ and the $G$-action is real or complex analytic,
then we can consider the field $\mathfrak{R}(\E)$ consisting of functions that are analytic
in the base variables and rational on the fibers of $\pi_\infty:J^\infty\to M$.
Our theory of differential invariants applies to this class as well, and we can conclude
finiteness and separation property.
Similarly, it holds if the action of $G$ is transitive on $\E^k$ and is algebraic on the
fibers of $\E^\infty\to\E^k$.

If $\E^l$ is a complex analytic Stein space (for $l\gg1$) and $G$ a complex-reductive Lie group
acting upon it by symmetries, then results of \cite{GM} imply existence of rational quotient
on a finite jet-level. Then the results of Section \ref{S3} yield a global structure theorem
in the class of meromorphic functions $\mathcal{M}(\E)$, i.e.\ meromorphic rational differential
invariants are finitely generated in the Lie-Tresse sense and they separate the regular orbits.

The classes $\mathfrak{P}_l(\E)$, $\mathfrak{R}(\E)$ and $\mathcal{M}(\E)$ are sufficiently rich,
yet controllable. Differential invariants in the larger space of smooth functions $C^\infty(\E)$
can fail to satisfy the finiteness property of the generalized Lie-Tresse theorem globally.

%%%%%%%%%%%%%%%%%%%%%%%%%%%%%%%%%%%%%%%%%%%%%%%%%%%%%%%%%%%%%%%%%%%%%%%%%%
\subsection{Overview of the problem and Outline of the paper}\label{S03}

Differential invariants play an important role in the solution of the equivalence problem
for geometric structures (K\"ahler and Einstein metrics, projective and conformal structures,
webs  etc) and integration of differential equations.

Their investigations have origin in the works by Lie, Halphen, Tresse and Cartan,
with later advances by Sternberg, Kobayashi, Chern and Tanaka to name a few.
This includes the theory of representations and the structure of infinite pseudogroups
\cite{C$_1$,SS,GS$_1$,Kum,RK}.

Structure description of the algebra of differential invariants, in spite of the recent
interest and progress, was mainly micro-local or was limited to locally free and
un-constrained actions (i.e.\ action on the whole space of jets), see \cite{Th,KJ,KL$_4$,OP,Man}.

Many classical irrational algebraic expressions (containing roots) are local or even micro-local
but not global invariants; we claim that all global $G$-invariants are generated by
rational differential invariants.

To see this consider the well-known formula for the curvature of curves, which illustrates the
difference between micro-local and global approaches.

\medskip

{\bf Example.} The proper motion group $E(2)_+=SO(2)\ltimes\R^2$ acts on the curves in Euclidean
$\R^2(x,y)$. The classical curvature
 $$
\kappa=\frac{y''(x)}{\sqrt{(1+y'(x)^2)^{3}}}
 $$
is not an invariant of $E(2)_+$ (indeed, the reflection $(x,y)\mapsto(-x,-y)$ preserves the circle
$x^2+y^2=r^2$ but transforms $\kappa\mapsto-\kappa$; $\kappa=\pm r^{-1}$), but its square
$\kappa^2$ is a bona fide rational differential invariant.

Notice however that the Lie group $E(2)_+$ is connected, and that $\kappa$ is invariant under the action of
its Lie algebra (resp.\ local Lie group). What happens is that under the lifted action of $S^1$ on the space
of 1st jets, the derivative $y'(x)$ becomes infinite and changes the branch, whence the change of sign.

\medskip

In this paper we show that this is the common situation with the algebraic actions and establish
the structure theorem for the algebra of differential invariants (with Lie-Tresse generating property).
The specification of behavior of differential invariants (rational-polynomial) makes it possible to
obtain the first global result on the subject.

\medskip

Structure of the paper is as follows. In Section \ref{S1} we discuss the basics from the
geometric theory of differential equations, including symbolic modules, characteristics,
Spencer cohomology and prolongations, and we also introduce pseudogroups, actions,
and define differential invariants and invariant derivations. Most of these notions are
standard, but we make some important specifications for the needs of the present paper.

In Section \ref{S2} we introduce our main tools: symbolic calculus of the differential invariants,
together with the corresponding Spencer-like cohomology that count the differential invariants.
Working in the algebraic category we recall Rosenlicht's theorem and the notion of
quotient by the action. Then we prove stabilization (a Noetherian property) for this symbolic system,
and deduce stabilization of cohomology. We also establish existence of invariant derivations
(under no additional assumptions like those of \cite{Kum}).

In Section \ref{S3} we combine Rosenlicht's theorem with the results from Section~\ref{S2}
to obtain our main results. We also generalize the results to obtain finiteness of other geometric
quantities for the algebraic actions of pseudogroups. In particular, we prove finite generation
property of the module of invariant derivatives, and indicate, when we can choose $n$
independent invariant derivatives, in which case they coincide with invariant derivations.
We briefly indicate how to overcome the restriction that $G$ is algebraic: essentially
one needs to separate only the orbits in low level jets because the prolonged actions are
always algebraic in higher jets.
Then we discuss differential syzygy and their finiteness.
Finally we bound the growth of the Hilbert function. This implies rationality
of the Poincar\'e series, solving the problem of V.\,Arnold on counting the number of moduli
of geometric structures with respect to transitive pseudogroup actions.

An example of a pseudogroup action presented at the end of Sections \ref{S2} and \ref{S3}
illustrates all the introduced notions and claims.

In Section \ref{S4} we perform some calculations. We start by recalling the basic methods
to compute differential invariants, and then we present some new examples. They are chosen
to demonstrate importance and sharpness of our main assumptions.
In particular, we show that dropping the assumption of algebraic action or
considering non-transitive actions can lead to violation of separation or finite generation
property in the Lie-Tresse theorem (both local and global versions). The examples are also of
independent interest as they are related to classical geometric problems.

Notations in Section \ref{S1} will be used generously, so that $\A$ can denote
$\mathfrak{P}_l(\E)^G$, $\mathfrak{R}(\E)^G$, $\mathcal{M}(\E)^G$ or $C^\infty(\E)^G$
depending on the context. For definiteness we address mostly the case of
smooth functions (globally or micro-locally), the other definitions being analogous.
Starting from Section \ref{S2} however we restrict $\A$ to be our main algebra of
rational-polynomial invariants (if not specified otherwise) and the same concerns derivations
and other invariant objects.

%%%%%%%%%%%%%%%%%%%%%%%%%%%%%%%%%%%%%%%%%%%%%%%%%%%%%%%%%%%%%%%%%%%%%%%%%%
\section{Pseudogroup actions}\label{S1}

In this section we discuss the general introduction of pseudogroups,
differential equations and actions developing the ideas of \cite{GS$_2$,SS,S$_2$}.
In some parts the exposition follows \cite{KL$_4$}.

%%%%%%%%%%%%%%%%%%%%%%%%%%%%%%%%%%%%%%%%%%%%%%%%%%%%%%%%%%%%%%%%%%%%%%%%%%
\subsection{Jets and the geometric theory of PDEs}\label{S11}

For a smooth manifold $M$ of dimension $m$ denote by $J^k(M,n)$ the space of $k$-jets
$a_k=[N]^k_a$ of $n$-dimensional submanifolds $N\subset M$ through $a$, taken over all points $a\in M$.
Every submanifold $N$ of dimension $n$ determines uniquely the jet-extension
$j_k(N)=\{[N]^k_a:a\in N\}\subset J^k(M,n)$.

We have $J^0(M,n)=M$ and $J^1(M,n)=\op{Gr}_{n}(TM)$.
The natural projections $\pi_{k,k-1}:J^k(M,n)\to J^{k-1}(M,n)$ allow us to define
the projective limit $J^\infty(M,n)=\lim J^k(M,n)$.

The fibers $F(a_{k-1})=\pi_{k,k-1}^{-1}(a_{k-1})$ for $k>1$ carry a canonical
affine structure \cite{Go,KLV}, associated with the vector structure described below.

Denote $\t=T_aN=[N]^1_a$ and $\nu=T_aM/T_aN$ (both are determined by $a_1$).
Let $a_k\in J^k(M,n)$, $a_{k-1}=\pi_{k,k-1}(a_k)$.
Then $T_{a_k}F(a_{k-1})\simeq S^k\t^*\ot\nu$ and we get the exact sequence:
 $$
0\to S^k\t^*\ot\nu\to T_{a_k}J^k(M,n)\stackrel{(\pi_{k,k-1})_*} \longrightarrow
T_{a_{k-1}}J^{k-1}(M,n)\to 0.
 $$
Thus the affine structure of $F(a_{k-1})$ is modeled on the vector space $S^k\t^*\ot\nu$.

If $M$ is the total space of a vector bundle $\pi:M\to B$ of rank $m-n$, the space
$J^k\pi\subset J^k(M,n)$ is an open dense subset consisting of jets of
local sections, i.e.\ submanifolds $N$ transversal to the fibers of $\pi$.

A system of differential equations (PDE) of maximal order $l$ is a
sequence $\E=\{\E^k\}_{0\le k\le l}$ of submanifolds $\E^k\subset J^k(M,n)$ with $\E^0=M$
such that for all $0\le k<l$ the following conditions hold:

 \begin{enumerate}
  \item
$\pi^\E_{k+1,k}:\E^{k+1}\to\E^k$ are smooth fiber bundles.
  \item
Denote $L(a_{k+1})=T_{a_k}j_k(N)$ for $a_{k+1}=[N]_a^{k+1}$.
The first prolongations
 $$
(\E^k)^{(1)}=\{a_{k+1}\in J^{k+1}\,|\,L(a_{k+1})\subset T_{a_k}\E^k\}.
 $$
are smooth subbundles of $\pi_{k+1}$ and $\E^{k+1}\subset(\E^k)^{(1)}$.
 \end{enumerate}

The higher prolongations are defined either by requiring the higher contact of $j_k(N)$ with $\E^k$
or inductively. We let $\E^{i+l}=(\E^l)^{(i)}$ and assume that the projections $\pi_{k+1,k}:\E^{k+1}\to\E^{k}$
are (affine) bundles for $k\ge l$ --- this is equivalent to formal integrability (compatibility) of the system $\E$ \cite{S$_2$,Go}.

Consider a point $a_l\in\E^l$ with $a_i=\pi_{l,i}(a_l)$ for $i<l$.
It determines the symbolic system $\{g_i\}_{i=0}^\infty$ by the formula
 $$
g_i=T_{a_i}\E^i\cap F(a_{i-1})\subset S^i\t^*\ot\nu\ \text{ for }\ i\le l
 $$
and $g_i=g_l^{(i-l)}=g_l\ot S^{i-l}\t^*\cap S^i\t^*\ot\nu$ for $i>l$.
By the above conditions the symbols $g_i$ form smooth vector bundles over
$\E^l$ and $g_i\subset g_{i-1}^{(1)}$. This latter inequality alone determines
a {\it symbolic system\/} $g$, see \cite{KL$_1$}.

The Spencer $\d$-complex for the PDE system $\E$ at a point $a_l\in\E^l$
(or for the symbolic system $g$) is
 $$
\cdots\to g_{i+1}\ot\La^{j-1}\t^*\stackrel\d\longrightarrow
g_i\ot\La^j\t^*\stackrel\d\longrightarrow
g_{i-1}\ot\La^{j+1}\t^*\stackrel\d\to\cdots
 $$
where $\delta$ is the Spencer operator
given by $\delta(p^i\otimes v\otimes\omega)=ip^{i-1}\otimes v\otimes(p\we\omega)$ for
$p\in\t^*$, $v\in\nu$, $\omega\in\La^j\t^*$ (clearly $S^i\t^*\ot\nu$ is generated by the elements $p^i\ot v$).
 %(can be expressed via the symbol of de Rham differential).

The cohomology $H^{i,j}(\E;a_l)=H^{i,j}(g)$ of the above complex at the term $g_i\ot\La^j\t^*$
is called the {\em Spencer $\d$-cohomology group\/}.
The formal integrability of the system $\E$ is equivalent to the regularity condition
(the symbolic system $g$ is a bundle over $\E^l$) and to vanishing
of certain curvature type tensors $W_k(\E)\in H^{k-1,2}(\E)$, see \cite{Ly}
(in particular, if the system $\E$ is involutive, i.e.\ $H^{i,j}(\E)=0$ for $i\ge l$, and
$\pi_{l+1,l}:\E^{l+1}\to\E^l$ is onto, then $\E$ is formally integrable \cite{S$_2$,Go}).

In general some $W_k$ can be nonzero, and then one needs to perform the Cartan-Kuranishi completion to
involution. This usually results in shrinking of the equation $\E$ as a submanifold in $J^\infty(M,n)$
and removing singular strata. In our algebraic situation this
boils down to enlarge the proper Zariski closed subvariety $S_l$,
so we will suppose from the beginning that $\E$ is formally integrable.

%%%%%%%%%%%%%%%%%%%%%%%%%%%%%%%%%%%%%%%%%%%%%%%%%%%%%%%%%%%%%%%%%%%%%%%%%%
\subsection{Lie pseudogroups}\label{S12}

The construction of the previous Section yields as a partial case the jet space
for maps $J^k(M,M)\subset J^k(M\times M,m)$. This contains, as an open dense
subset, the jet-space for diffeomorphisms $D^k(M)$ consisting of $k$-jets of $m$-dimensional
submanifolds $N\subset M\times M$, which diffeomorphically project to both factors.

This $D^k(M)$ equipped with the partially defined composition operation is
a {\em Lie groupoid\/}, basic for the definition of a general finite order pseudogroup.
Its stabilizer at $a$, $D_a^k$ is called the {\it differential group\/} of order $k$.
Let us remark that $D_a^k$ is an affine algebraic group because it is an automorphism
group of the space of $k$-jets with the fixed point.

 \begin{dfn}\label{pseudogr}
A Lie pseudogroup of order $l$ is given by a Lie equation, which is a collection of sub-bundles
$G^j\subset D^j(M)$, $0<j\le l$, such that the following properties are satisfied:
 \begin{enumerate}
  \item If $\vp_j,\psi_j\in G^j$, then
$\vp_j\circ\psi_j^{-1}\in G^j$ whenever defined,
  \item $G^j\subset(G^{j-1})^{(1)}$ and $\rho_{j,j-1}:G^j\to G^{j-1}$ is a bundle for every $j\le l$.
 \end{enumerate}
 \end{dfn}
We assume {\it transitivity\/} of the pseudogroup action, i.e.\ $G^0=D^0(M)=M\times M$.
Assumption 1 implies that $\op{id}^j_M\in G^j$ and
$\vp_j\in G^j\Rightarrow\vp_j^{-1}\in G^j$.

Pseudogroups $G=\{G^j\}$ defined by this approach, with $G^{l+j}=(G^l)^{(j)}$ for $j>0$, can be
studied for integrability by the standard prolongation-projection method
\cite{GS$_1$,GS$_2$,Kur,KLV,S$_1$}.
We will assume formal integrability from the beginning, so that $G^k$ is a bundle over $G^{k-1}$
for any $k>0$.

 \begin{remark}\label{rem2}
By Cartan's first fundamental theorem \cite{SS} every Lie algebra sheaf is a sheaf of
infinitesimal automorphisms of a transitive finite order structure. This is equivalent to
the claim that transitive finite order pseudogroups are identical with Lie pseudogroups.
 \end{remark}

Denote $G^j_a=\{\vp_j\in G^j\,|\,\vp_0(a)=a\}$ the isotropy subgroup of $G^j$.
Transitivity implies that the subgroups $G^j_a\subset D^j_a(M)$ are conjugate for
different points $a\in M$. By our assumption these $G^j_a$ are {\it algebraic subgroups\/}
of $D_a^j$ for $j\le l$. It follows that the prolongations $G^k_a\subset D^k_a(M)$ are also
algebraic for all $k>l$.

% Denote $\mathfrak{G}^j_a=\op{Ker}[\rho_{j,j-1}:G^j_a\to G^{j-1}_a]$.
% This is a normal subgroup of $G^j_a$, which is Abelian for $j>1$ and can be generic
% for $j=1$: $\mathfrak{G}^1_a=G^1_a\subset\op{Gl}(T_aM)$.

 \begin{remark}\label{rem3}
For our basic theorems (description of orbits and invariants in $\E$)
we only need the action of $G^k$, so they are true for general pseudogroups, and from now on we will
talk only of pseudogroups (instead of Lie pseudogroups).
 \end{remark}

 \begin{dfn}
The symbol of the pseudogroup $G$ at a point $\vp_j\in G^j_a$ is
 $$
\g^j(\vp_j)=\op{Ker}\bigl[(\rho_{j,j-1})_*:T_{\vp_j}G^j\to
T_{\vp_{j-1}}G^{j-1}\bigr]\subset S^j(T^*_aM)\ot T_aM.
 $$
 % and it can be identified with the Lie algebra $\g^j_a$ of the Lie group $\mathfrak{G}^j_a$.
 \end{dfn}

Notice that contrary to finite-dimensional Lie groups (which can be abstract),
the infinite pseudogroups always come equipped with their natural representations. In our setup
$G^k$ acts on $J^k(M,n)$ via the action of local diffeomorphisms on submanifolds
$\vp_k:[N]_a^k\mapsto[\vp(N)]_{\vp(a)}^k$.

An important example is given by the natural bundles $\pi:M\to B$, with $G$ being lifted
from the pseudogroup $\op{Diff}_\text{loc}(B)$ on the base. Here $J^\infty\pi$ is the bundle
of formal geometric structures \cite{ALV}
and the global Lie-Tresse theorem yields the framework for classification of differential
invariants for the geometric structures of a given type.

%%%%%%%%%%%%%%%%%%%%%%%%%%%%%%%%%%%%%%%%%%%%%%%%%%%%%%%%%%%%%%%%%%%%%%%%%%
\subsection{Differential invariants and invariant derivations}\label{S13}

The equivalence problem is to decide when a submanifold $N_1\subset M$ can be transformed
to a submanifold $N_2\subset M$ by a map $\vp\in G$ (if an equation $\E$ is imposed,
both submanifolds $N_i$ must be solutions of it)\footnote{The equivalence problems of
geometric structures on a manifold can be set up to be a partial case of our general setting
for equivalence of submanifolds.}. A sub-problem is related to
equivalence of $\infty$-jets of submanifolds -- then in many cases (algebraic,
analytic, germs of submanifolds given by elliptic equations etc) the formal
equivalence implies the local and eventually the global one.

Thus we have to distinguish between $G$-orbits in $\E^\infty\subset J^\infty(M,n)$ and this is precisely what
the differential invariants (i.e.\ functions constant on these orbits) do. Consider the algebra of smooth
functions $C^\infty(\E^\infty)$, which is the inverse limit of the algebras $C^\infty(\E^k)$ via the maps $\pi_{k,k-1}^*$. In many cases it is convenient to work with the sheaf $C^\infty_\text{loc}(\E^k)$
of germs of such smooth functions.

 \begin{dfn}
A function $f\in C^\infty_\text{\rm{loc}}(\E^k)$ is a differential invariant of order $k$ if it is invariant
with respect to the $G$-action on $\E^k$. The algebra of  differential invariants of order $k$ is
denoted by $\mathcal{A}_k$, and $\mathcal{A}=\lim\mathcal{A}_k$ is the algebra of all smooth differential invariants.
 \end{dfn}

Thus the algebra of differential invariants is filtered $\mathcal{A}=\cup\mathcal{A}_k$ via the natural inclusions $\pi_{k+1,k}^*:\mathcal{A}_k\to\mathcal{A}_{k+1}$. In addition to the usual algebraic operations $\mathcal{A}$ has
the following: for any smooth function $\Phi$ with $r$ arguments and a collection $I_1,\dots,I_r\in\mathcal{A}$
we get $\Phi(I_1,\dots,I_r)\in\mathcal{A}$ (whenever the composition is defined).

We will not consider arbitrary smooth invariant functions, but will restrict to rational-polynomial
invariant functions, which are defined globally and form a subalgebra
$\mathfrak{P}_l(\E)^G\subset C^\infty(\E^\infty)$ (as in the Introduction).
In this case the above composition $\Phi$ must be taken rational-polynomial as well.
However the above algebraic operations are not usually enough to finitely generate $\mathcal{A}$ or $\mathfrak{P}_l(\E)^G$.

Sophus Lie proposed to produce higher order invariants via invariant derivations $\nabla$
of the algebra of differential invariants (\cite[\S4.4]{Li$_2$}). He suggested that a finite number of them $\nabla_1,\dots,\nabla_n$ is enough to generate the entire algebra $\mathcal{A}$.

Consider the bundle $\t_k=\pi_{k,1}^*(\t)$ on the jet-space $J^k(M,n)$, which is the
pull-back of the tautological bundle $\t$ over $J^1(M,n)$. Its fiber at the point $a_k$
can be identified with the horizontal space $L(a_k)$ defined in \S\ref{S11}.
Let $v\mapsto\hat v$ denote this natural lift $\t(a_1)\to\t_k(a_k)$.

Denote by $C^\infty(\t_k)$ the $C^\infty(J^k)$-module of sections of $\t_k$,
and let $C^\infty(\t_k|\E^k)$ be its restriction to the equation
in order $k$. Its elements are finite sums $\sum f_iv_i$, where $f_i\in C^\infty(\E^k)$
and $v_i$ are sections of the bundle $\t_k$.

Pullback $\pi^*_{k+1,k}$ injectively maps these modules and simultaneously extends their rings.
The $C^\infty(\E^\infty)$-module of {\em horizontal vector fields\/}
(also: $\mathcal{C}$-fields \cite{KLV}) is the inductive limit
 $$
C^\infty(\E^\infty,\t)=\lim_{\rightarrow}C^\infty(\t_k|\E^k).
 $$
The $C^\infty(\E^\infty)$-module of horizontal 1-forms $C^\infty(\E^\infty,\t^*)$
is defined similarly.

The horizontal differential $\hat df$ of a function $f\in C^\infty(\E^k)$ at $a_{k+1}\in\E^{k+1}$
is given by $\hat d f=d_{a_k}f|_{L(a_{k+1})}$.
This gives the map
 $$
\hat d:C^\infty(\E^\infty)\to C^\infty(\E^\infty,\t^*).
 $$

The action of horizontal vector fields on functions comes from the natural pairing between $\t$ and $\t^*$:
 $$
\xi(f)=i_\xi\hat df=i_{\hat\xi}df,\qquad \xi\in C^\infty(\E^\infty,\t),\ f\in C^\infty(\E^\infty).
 $$
This action is linear and satisfies the Leibniz rule. Therefore any horizontal vector field $\xi$
is a derivation of the algebra of functions $C^\infty(\E^\infty)$.

In addition such derivations, when naturally extended to derivations of the exterior algebra $\Omega^\bullet(\E^\infty)$ (by the condition of commutation with de Rham $d$),
preserve the Cartan ideal $\mathcal{C}\Omega\subset\Omega^\bullet(\E^\infty)$,
generated by 1-forms $U(f)=df-\hat d f$, $f\in C^\infty(\E^\infty)$ \cite{KLV}.
For the jets of sections $J^\infty\pi$ (and equations in them) the $\mathcal{C}$-fields
are combinations of the total derivatives of the first order.

% Let us explain that all $\mathcal{C}$-derivations are such.
% This is a generalization of the corresponding fact from \cite{KLV}.
% Consider the space of all derivations $\mathfrak{Der}$ of the algebra $C^\infty(\E^\infty)$.
% We extend it to the exterior algebra $\Omega^\bullet(\E^\infty)$ by the standard requirement
% $[\nabla,d]=0$. Cartan forms are defined as those that vanish upon restriction to integral
% manifolds (solutions; we probably shall require that the equation is ample -- empty and determined
% cases work well). Let $\mathfrak{Der}_\mathfrak{C}$ be the space of those derivations that preserve
% the Cartan ideal. Then they act upon the quotient, which is the space of horizontal forms
% $C^\infty(\E^\infty,\Lambda^\bullet\t^*)$, and we have $[\nabla,\hat d]=0\mod(Cartan)$.
% Now $\mathcal{C}$-derivations have representation as the inner derivations on $\Omega^\bullet(\E^\infty)$
% by the rule  $i_\nabla(A\,dB)=A\,\nabla(B)$ and they extend by anti-commutation with $d$. This finishes
% their identification with the space of horizontal vector fields: every $\mathcal{C}$-derivation of
% $C^\infty(\E^\infty)$ is the composition of the horizontal differential
% $\hat d:C^\infty(\E^\infty)\to C^\infty(\E^\infty,\t^*)$ and the inner derivation by
% a horizontal vector field.

 \begin{dfn}
Invariant derivatives are $G$-invariant elements of the module $C^\infty(\E^\infty,\t)$
(according to the natural action of the pseudogroup $G$ on it).
We denote the space of invariant derivatives by $\mathfrak{D}(\E,\t)$.
 \end{dfn}

The space $\mathfrak{D}(\E,\t)$ has the natural structure of a module over the algebra of
invariant functions. We have two possibilities for this module: (micro-local) smooth
or global rational-polynomial depending on the choice of the algebra $\mathcal{A}$.
The space $\mathfrak{D}(\E,\t)$ is also a Lie algebra acting on $\A$ from the left.
So we get

 \begin{prop}\label{jkl}
Every $\nabla\in\mathfrak{D}(\E,\t)$ determines a $G$-invariant derivation
 $$
\nabla:\mathcal{A}\to\mathcal{A}.
 $$
 \end{prop}

As noted above these derivations of the algebra $\A$ are special: they preserve the Cartan
ideal $\mathcal{C}\Omega$ of the equation; equivalently they preserve the Cartan distribution
$\mathcal{C}\subset T\E^\infty$ generated by the $\mathcal{C}$-fields.

 \begin{dfn}\label{DfnDer}
Consider the Lie algebra of $\mathcal{C}$-fields on $\E^\infty$ as
derivations of the algebra of functions $C^\infty(\E^\infty)$.
Let $\mathfrak{N}$ be its Lie subalgebra leaving invariant the subalgebra of differential
invariants $\mathcal{A}\subset C^\infty(\E^\infty)$ and $\mathfrak{N}_0\subset\mathfrak{N}$ be
the ideal of the fields that act trivially on $\mathcal{A}$.
Define the invariant derivations as the quotient
 $$
\mathfrak{Der}(\E,\t)=\mathfrak{N}/\mathfrak{N}_0.
 $$
 \end{dfn}

For completeness we should have included $G$ into the notation for both
$\mathfrak{D}(\E,\t)$ and $\mathfrak{Der}(\E,\t)$, but we skip it for simplicity sake.
As with the former, the module of invariant derivations $\mathfrak{Der}(\E,\t)$
over $\mathcal{A}$ can be smooth of rational-polynomial depending on the choice
of the algebra $\mathcal{A}$.

 % maybe notation $\mathfrak{Der}^G(\mathcal{A})$ for the space of derivations;
 % these latter can be horizontal (C-derivations), Lie and vertical;
 % an important remark is that without $G$-action C-derivations (operators in total derivatives)
 % coincide with $G$-invariant horizontal fields, but in the presence of $G$ this can fail.

 % Thus every invariant derivative is an invariant derivation, but in the presence of equation $\E$
 % and pseudogroup $G$ the inverse implication is not generally true.

Proposition \ref{jkl} yields the homomorphism
(of both Lie algebra and $\A$-modules structures)
 $$
\iota:\mathfrak{D}(\E,\t)\to\mathfrak{Der}(\E,\t),
 $$
which in general is neither injective nor surjective.
We will investigate these two spaces and relations between them in Sections \ref{S23}, \ref{S33}.
 % (where we also define $\mathfrak{Der}(\E,\t)$ in the case of equation $\E$ in the
 % general jet space $J^\infty(M,n)$).

%%%%%%%%%%%%%%%%%%%%%%%%%%%%%%%%%%%%%%%%%%%%%%%%%%%%%%%%%%%%%%%%%%%%%%%%%%
\subsection{Particular case: Tresse derivatives}\label{S14}

Let us first write the constructions from the previous section in local coordinates.
We identify $M$ with the total space of a fiber bundle $\pi:M\to B$ with $\dim B=n$.
Then $J^k\pi\subset J^k(M,n)$ is a local chart near $a_k=[N]_a^k$ if the fibers of $\pi$
are transversal to the germ of the submanifold $N\subset M$ at $a$.

Choose local coordinates $(x^i,u^j)$ on $M$ adapted to $\pi$, i.e.\ $x^i$ are coordinates
on the base and $u^j$ are coordinates on the fibers.
These local coordinates on $\pi$ induce the canonical coordinates $(x^i,u^j_\z)_{0\le|\z|\le k}$ on
$J^k\pi$, where for a submanifold $N$ given as graph $u=h(x)$ we have:
$u^j_\z([h]_a^k)=\dfrac{\p^{|\z|}h^j}{\p x^\z}(a)$.

In the local chart $J^\infty(\pi)\subset J^\infty(M,n)$ the derivatives (horizontal vector fields)
can be written as $\sum f_i\,\D_i$, where
 $$
\D_i=\p_{x^i}+\sum_{j;\z}u^j_{\z+1_i}\p_{u_\z^j}:C^\infty(J^k\pi)\to C^\infty(J^{k+1}\pi)
 $$
are the total derivative operators.

The horizontal differential % $\hat d:C^\infty(J^k\pi)\to C^\infty(J^{k+1}\pi)\ot_{C^\infty(B)}\Omega^1(B)$
 % is defined by the properties:
 %  $$
 % 1.\ \hat df|_{\pi_{k+1}^{-1}(x)}=0,\qquad 2.\
 % \hat df|_{j_{k+1}(s)}(x_{k+1})=df|_{j_k(s)}(x_k)
 %  $$
 % for any section $s$ of $\pi$ with $j_{k+1}(s)(x)=x_{k+1}$, $\pi_{k+1,k}(x_{k+1})=x_k$.
has the following form in these coordinates:
 $$
\hat df=\sum\D_i(f)\,dx^i.
 $$
The same formulae apply for the restriction to the equation $\E$.

An important particular case of derivations of the algebra $C^\infty(\E^\infty)$
constitute derivatives \`a la Tresse, which we now introduce.

Consider $n$ functions $f_1,\dots,f_n$ on $\E^k$ such that the
open\footnote{When we turn to the algebraic situation, then 'open' becomes 'Zariski open'.} set
 \begin{equation}\label{ndgTr}
\E_{k+1}'=\{a_{k+1}\in\E^{k+1}\,:\,df_1\we\dots\we df_n|_{L(a_{k+1})}\ne0\}
 \end{equation}
is dense in $\E^{k+1}$. Then we define differential operators
 $$
\hat\p_i:C^\infty(\E^k)\to C^\infty(\E_{k+1}'),
 $$
by the formula
 $$
df|_{L(a_{k+1})}=\sum_{i=1}^n\hat\p_i(f)(a_{k+1})\,df_i|_{L(a_{k+1})},
 $$
The expressions $\hat\p_i(f)=\hat\p f/\hat\p f_i$ are called the Tresse derivatives of $f$ with respect to $f_i$.
These derivations enjoy the commutativity property $[\hat\p_i,\hat\p_j]=0$.

 \begin{prop}
If $f_1,\dots,f_n$ are $G$-differential invariants, then the operators
$\hat\p_i=\hat\p/\hat\p f_i:\mathcal{A}_k\to\mathcal{A}_{k+1}$ are invariant derivations.
 \end{prop}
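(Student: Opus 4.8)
The plan is to identify the Tresse operators $\hat\p_i$ with contractions by genuine $G$-invariant horizontal vector fields and then invoke the preceding Proposition, which says that every invariant derivative induces a $G$-invariant derivation of the algebra of differential invariants. Concretely, since $f_1,\dots,f_n$ are $G$-invariant, their horizontal differentials $\hat d f_1,\dots,\hat d f_n$ form a $G$-invariant horizontal coframe on the dense open set $\E_{k+1}'$ of (\ref{ndgTr}); I would dualize this coframe to obtain a $G$-invariant horizontal frame $\hat\p_1,\dots,\hat\p_n\in\mathfrak{D}(\E|_{\E_{k+1}'},\t)$, and then conclude by applying these invariant derivations to elements of $\mathcal{A}_k$, keeping track of the jet order.

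In detail, the steps are the following. First, recall (see \S\ref{S11}) that the horizontal distribution $L$ on the jet tower --- equivalently, the bundle $\t_k$ with its lift $v\mapsto\hat v$ and the horizontal differential $\hat d$ --- is canonically attached to the jets, so that for $\vp\in G$ the prolonged action carries $L(a_{k+1})$ isomorphically onto $L(\vp(a_{k+1}))$ compatibly with $\pi_{k+1,k}$; consequently $\hat d$ is $G$-equivariant, $\vp^*\c\hat d=\hat d\c\vp^*$. Next, $G$-invariance of the $f_i$ gives $G$-invariance of the forms $\hat d f_i\in C^\infty(\E_{k+1},\t^*)$, hence of the section $df_1\we\dots\we df_n|_L$ of $\La^n\t^*$; its non-vanishing locus $\E_{k+1}'$ is therefore $G$-invariant (and dense by hypothesis). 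Then, on $\E_{k+1}'$ the $\hat d f_i$ trivialize the rank-$n$ bundle $\t^*$, and dualizing this $G$-equivariant trivialization produces the dual frame $\hat\p_i$ of $\t$, which is $G$-invariant; by the defining relation of the Tresse derivative, $\hat\p_i(f)=i_{\hat\p_i}\hat d f$. Finally, for $f\in\mathcal{A}_k$, equivariance of $\hat d$ and invariance of $\hat\p_i$ give $\vp^*(\hat\p_i f)=i_{\hat\p_i}\hat d(\vp^*f)=\hat\p_i f$ for all $\vp\in G$, so $\hat\p_i f$ is invariant; and since $\hat d$ raises the jet order by one, $\hat\p_i f\in C^\infty(\E_{k+1}')$, whence $\hat\p_i f\in\mathcal{A}_{k+1}$, i.e. $\hat\p_i:\mathcal{A}_k\to\mathcal{A}_{k+1}$. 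The commutativity $[\hat\p_i,\hat\p_j]=0$ was already recorded.

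The main obstacle is not conceptual depth but a few points where care is required: pinning down the $G$-equivariance of the horizontal differential $\hat d$ (i.e. the naturality of the horizontal distribution under prolonged pseudogroup transformations), and making the passage from an invariant coframe to its dual invariant frame a genuine identity of $G$-invariant sections of $\t$ rather than a merely pointwise statement. One must also be careful about domains of definition: the operators $\hat\p_i$ are a priori defined on all of $C^\infty(\E_k)$ but take values in $C^\infty(\E_{k+1}')$, the invariance is claimed only on $\mathcal{A}_k$, and in the algebraic setting ``dense open'' is to be read as ``Zariski-dense Zariski-open'' (with $\E_{k+1}'$ then a $G$-invariant such set, as flagged in the footnote to (\ref{ndgTr})). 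Beyond these bookkeeping issues the argument only uses the Leibniz rule and linearity of horizontal vector fields, already established.
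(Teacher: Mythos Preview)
The paper states this proposition without proof (it is taken as essentially evident, and the text immediately moves on to the coordinate discussion of $\hat d$ and the chain-rule expression for the $\hat\p_i$). Your argument is correct and is precisely the justification one would supply: naturality of the horizontal distribution under prolonged diffeomorphisms gives $G$-equivariance of $\hat d$, so the $\hat d f_i$ form a $G$-invariant coframe of $\t^*$ on the $G$-invariant set $\E_{k+1}'$, and the dual frame --- which by the defining relation is exactly $(\hat\p_1,\dots,\hat\p_n)$ --- consists of $G$-invariant sections of $\t$; then the preceding Proposition applies. Your attention to the domain $\E_{k+1}'$ and to the jet-order shift is appropriate and matches the surrounding text.
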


 % For a multiindex $\z=(i_1,\dots,i_n)$ we define $\D_\z=\D_1^{i_1}\cdots\D_n^{i_n}$. If $l=|\z|$ is the length of
 % the multi-index $\z$, then $\D_\z:C^\infty(J^k\pi)\to C^\infty(J^{k+l}\pi)$.
On $G$-invariant set $\E_{k+1}'$ we can re-write the condition in (\ref{ndgTr}) as
  $$
\hat df_1\we\dots\we\hat df_n\ne0,
 $$
i.e.\ the Jacobian $DF=\|\D_i(f_j)\|$ is non-degenerate. For any other $f\in\mathcal{A}$ we have:
 \begin{equation*}
\hat df=\sum_i\hat\p_i(f)\,\hat df_i.
 \end{equation*}

Thus
 $$
\hat d=\sum dx^i\ot\D_{x^i}=\sum\hat df_i\ot\hat\p/\hat\p f_i,
 $$
which yields the expression of the Tresse derivatives as $\C$-fields:
 \begin{equation*}
\hat\p_i=\sum_j\bigl(DF^{-1}\bigr)_{ij}\D_{x^j},
 \end{equation*}
This formula can be interpreted as the chain rule in total derivatives.

%%%%%%%%%%%%%%%%%%%%%%%%%%%%%%%%%%%%%%%%%%%%%%%%%%%%%%%%%%%%%%%%%%%%%%%%%%
\subsection{Characteristics and involutivity}\label{S15}

Let $g=\oplus g_k$ be a symbolic system (it can correspond to a differential equation $\E$, but in this section
we discuss pure algebraic results) that is a collection of subspaces $g_k\subset S^k\t^*\ot\nu$ subject only to one
constraint $g_k\subset g_{k-1}^{(1)}$ (in terms of the Spencer cohomology this writes as $H^{k,0}(g)=0$).
For a vector space $V$ let us denote by $SV=\oplus S^iV$ the ring of homogeneous polynomials on $V^*$.

For $v\in\t$ define the map $\delta_v:S^{k+1}\t^*\ot\nu\to S^k\t^*\ot\nu$ by the formula
$\delta_v(p)=\langle v,\delta p\rangle$. More generally given $v_{i_j}\in V$ and
$v=\sum v_{i_1}\cdots v_{i_r}\in S^rV$ define
$\d_v=\sum\d_{v_{i_1}}\!\cdots\,\d_{v_{i_r}}:S^{k+r}\t^*\ot\nu\to S^k\t^*\ot\nu$.

The $\R$-dual (or $\CC$-dual if we work over the field of complex numbers) system
$g^*=\oplus g_k^*$ is an $S\t$-module with the structure given by
 $$
(v\cdot\kappa)p=\kappa(\d_vp),\ v\in S\t,\ \kappa\in g^*,\ p\in g.
 $$
This module, called the {\em symbolic module\/}, is Noetherian and
the Spencer cohomology of $g$ dualizes to the Koszul homology of $g^*$ \cite{GS$_1$}.

The {\em characteristic ideal\/} is defined by $I(g)=\op{Ann}(g^*)\subset S\t$.
Notice that passing to the module $\mathcal{M}=S\t/I(g)$ results in the shift of indices
in the homology: $H^{i,j}(g)^*=H_{i,j}(g^*)=H_{i+1,j-1}(\mathcal{M})$ --
the Koszul homology of $\mathcal{M}$.

A symbolic system has maximal order $l$ if $H^{k,1}(g)=0$ for $k>l$ (the cohomology $H^{*,1}$ counts the orders).
Referring to \cite{KL$_3$} for the general definition in the case of systems of different orders, we adapt the
following version here: a symbolic system $g$ is involutive from the level $k_0$ if $H^{k,j}(g)=0$ for $k\ge k_0$
and all $j$. By the Poincar\'e $\delta$-lemma \cite{S$_2$} any symbolic system of fixed orders
is involutive starting from some $k_0$.
The bound $k_0$ can be taken universal, depending only on $n=\dim\t$ and the orders of $g$.

The {\em affine real characteristic variety\/} of $g$ (or of $\E$) is the set of
$p\in\t^*\setminus\{0\}$ such that for every $k$ there exists a $w\in\nu\setminus\{0\}$ with $p^k\ot w\in g_k$.
This is a conical affine variety. The {\em affine complex characteristic variety\/} is defined similarly:
the set of $p\in\t^*_\CC$ such that $\exists\,w\in\nu_\CC$ with $p^k\ot w\in g_k^\CC\setminus\{0\}$.
Its projectivization is the (complex) {\em characteristic variety\/} $\op{Char}^\CC(g)\subset \mathbb{P}^\CC\t^*$
(the set of all $w$ for characteristic $[p]=\CC\,p$ form the {\it kernel sheaf\/} over the characteristic variety).

Relation of the characteristic variety to the characteristic ideal $I(g)=\oplus I_k$ is
given by the formula:
 $$
\op{Char}^\CC(g)=\{p\in \mathbb{P}^\CC\t^*\,|\,f(p^k)=0\,\forall f\in I_k,
\forall k\}.
 $$

Note that dimension $d$ of the affine complex characteristic variety
equals to the Chevalley dimension of the symbolic module $g^*$. Recall also that a
sequence of elements $v_1,\dots,v_s\in\t$ is called regular if
$v_i$ is not a zero divisor in the $S\t$-module $g^*/(v_1,\dots,v_{i-1})g^*$ for all $i\le s$.

An element $v\in\t$ is regular for sufficiently prolonged system $g$ iff the
hyperplane $\mathbb{P}\op{Ann}(v)^\CC$ does not contain
the characteristic variety $\op{Char}^\CC(g)$. More generally, a sequence $(v_1,\dots,v_i)$
is regular iff $\op{Char}^\CC(g)$ meets $\mathbb{P}\op{Ann}(v_1,\dots,v_i)^\CC$ transversally.
In other words, the projection of the characteristic variety along annihilator to
$\mathbb{P}(\langle v_1,\dots,v_i\rangle^\CC)^*$ is surjective for $i<d$ and injective elsewise.

It follows that there exists a sequence $(v_1,\dots,v_n)$ in $\t$ that is regular
for the module $g^*_{[k_0]}=\oplus_{i\ge k_0}g_i^*$. This implies
(see \cite{AB} or the appendix with a letter of Serre in \cite{GS$_1$}) that all Koszul homology
of $g^*$ w.r.t. the sequence $(v_1,\dots,v_n)$, or equivalently with coefficients in $\t$,
vanish except perhaps for the zero grading (equivalently $g^*_{[k_0]}$ is a Cohen-Macaulay module over $S\t$).
Dualizing this statement we obtain again that the positive Spencer cohomology $\oplus_{k\ge k_0}H^{k,+}(g)$ vanishes.
When $g\subset S\t^*\ot\nu$ the zero cohomology vanishes as well.

The above theory is applicable to the pseudogroup $G$, which is a particular
case of an equation (nonlinear Lie equation). The corresponding symbolic group $\g$
induces the characteristic variety $\op{Char}^\CC(\g)\subset \mathbb{P}^\CC T^*$, $T=T_aM$,
which (contrary to the case of a general equation $\E$) has the same projective type
at different points $\vp_k\in G^k_a$.

%%%%%%%%%%%%%%%%%%%%%%%%%%%%%%%%%%%%%%%%%%%%%%%%%%%%%%%%%%%%%%%%%%%%%%%%%%
\section{The structure of the space of $G$-orbits}\label{S2}

In this section we study the orbits of the pseudogroup $G$ action on $\E^k$
as the jet-level $k$ grows and we prove that the local behavior of differential invariants is
governed by a certain symbolic system.
We keep throughout our general assumptions that the action $G$ is algebraic on $\E$ and
transitive on the base $M$, though in some cases these assumptions can be relaxed.

Together with the pseudogroup $G$ we will consider its Lie algebra sheaf $\mathcal{G}$
defined as follows. Consider an isotopy $\vp_t\in G$, $t\in(-\ve,\ve)$, $\vp_0=\op{id}_M$,
i.e.\ a path in the pseudogroup through the unity smoothly depending on the parameter $t$,
and let $X(a)=\left.\frac{d}{dt}\right|_{t=0}\vp_t(a)$ be the vector field defined
in a neighborhood $U\subset M$ contained in the domain of $\vp_t$.
It is clear that the collection of all such $X$ form a subsheaf $\mathcal{G}$ of the sheaf
of all germs of vector fields on $M$, and moreover that $X\in\mathcal{G}$ satisfy the
linearized equation of the pseudogroup at the unity.

It is not clear in general that $\mathcal{G}$ gives all smooth solutions of this linear equation
(cf. \cite{SS}), but we do not rely upon this property, as we only need the jets of the fields $X$
from $\mathcal{G}$ to describe the differential invariants of $G$.
Indeed, the differential invariants $I\in\A$ satisfy the Lie equation $X^{(k)}\cdot I=0$,
where $X^{(k)}$ is the prolongation of $X\in\mathcal{G}$ to the space of $k$-jets,
and they are determined by it if the pseudogroup $G$ is connected.

Starting from this section we write $\A$ instead of $\mathfrak{P}_l(\E)^G\subset\mathfrak{R}(\E)^G$
(with $l$ being for some time undetermined) unless otherwise specified.

%%%%%%%%%%%%%%%%%%%%%%%%%%%%%%%%%%%%%%%%%%%%%%%%%%%%%%%%%%%%%%%%%%%%%%%%%%
\subsection{Quotient by the algebraic groups on a finite jet level}\label{S21}

Let us begin with some basic facts about actions of an algebraic Lie group $G$ on an algebraic variety $X$;
see \cite{PV,SR} for details.
Such $X$ comes naturally equipped with the sheaf of functions, whose section algebra
$\mathcal{O}_X(U)$ over a (Zariski) open subset $U\subset X$ consists of the restrictions of the
regular functions.

A surjective open morphism onto another algebraic variety $p:X\to Y$ is called a
{\it geometric quotient\/} if the fibers $p^{-1}(y)$ are $G$-orbits
and for every open subset $U\subset Y$ the map
$\mathcal{O}_Y(U)\to\mathcal{O}_X(p^{-1}(U))^G$ is an isomorphism of algebras.

A geometric quotient is necessary a categorical quotient, i.e.\ any morphism
$\rho:X\to Z$ constant along the orbits of $G$ can be factorized through $p$, namely
$\rho=f\circ p$ for a morphism $f:Y\to Z$. Thus in categorical sense if the geometric quotient
$Y=X/G$ exists, then it is unique.

However the geometric quotient does not always exist due to the presence of singular orbits
(regular orbits have the maximal possible dimension and they fiber a neighborhood
after possibly restricting to an open dense subset).
A way out of this difficulty is to remove the singular orbits.

 \begin{theorem}{{\bf[Rosenlicht]}}
For an algebraic action of the Lie group $G$ on an irreducible variety $X$
a finite set of rational invariants separates general orbits.
 \end{theorem}

Let us remark that while in the original paper \cite{R$_1$} the field of definition is allowed
to be $\R$, in other sources \cite{PV,SR} the standing assumption is that the field is
algebraically closed. However the statement over $\CC$ implies the one over $\R$ via the
complexification argument as follows.
If the action is real, the invariants for the complexification can be chosen
real rational functions (the real and imaginary parts of a rational invariant are $G$-invariant as well).
If they form a coordinate system near a regular orbit in the complexification, they
will also form a coordinate system when restricted to the totally real part
(a Zariski open set of which consists of regular orbits by the analytic continuation argument).

 \begin{cor}
Under the above assumptions, on a nonempty open set $X_0\subset X$
the action of $G$ admits a geometric quotient $p:X_0\to Y_0$.
 \end{cor}

To interpret (and explain) geometrically the above corollary, let us first treat the situation over $\CC$
(we refer to \cite{E,La} for details on the involved notions).
Consider the field of rational invariants $\mathfrak{R}(X)^G$ and let its elements $y_1,\dots,y_d$
separate the general $G$-orbits. Denote by $R[y]$ the polynomial ring generated by them. Let $Y=\op{Spec}R[y]$
be its spectrum (i.e.\ the set of all prime ideals in $R[y]$ equipped with the Zariski topology),
and let $Y_m=\op{maxSpec}R[y]$ be the maximal spectrum (i.e.\ the subset of all maximal ideals).
If $I$ is the polynomial ideal generated by the relations among $\{y_i\}_{i=1}^d$ in $\CC[y_1,\dots,y_d]$,
the space $Y_m$ can be identified with the locus $Z(I)=\{z\in\CC^d(y_1,\dots,y_d):f(z)=0\,\forall f\in I\}$
with the coordinate ring $R[y]=\CC[y_1,\dots,y_d]/I$. Since $I$ is obviously radical, it corresponds bijectively
to $Y_m=Z(I)$ by Hilbert's Nullstellensatz.

Embedding $R[y]\subset\mathfrak{R}(X)^G\subset\mathfrak{R}(X)$
induces a dominant rational mapping $\bar{p}:X\to Y_m$, $\bar{p}(x)=(y_1(x),\dots,y_d(x))$
(dominant means that the image is Zariski dense; indeed, if there is a polynomial relation on the image, it
shall be among the relations on $\{y_i\}_{i=1}^d$ defining $I$).
Since $\op{maxSpec}R[y]$ is Zariski closed in $\op{Spec}R[y]$, we can also treat $\bar{p}:X\to Y$
as a dominant rational mapping.

This morphism $\bar{p}$ is the rational quotient of $X$ by $G$,
having the universal property similar to the described above categorical quotient.
% (but for the field of rational functions).
The base $Y_0$ of the geometric quotient can be embedded into $Y$ as a nonempty open subset.
Codimension of the general orbit of $G$ on $X$ equals $d=\dim Y_m$ and
can be also described as the {\it transcendence degree\/} of the field $\mathfrak{R}(X)^G$.

Consider now the situation over $\R$. Here the maximal spectrum does not reflect the geometry of $Y_m$,
so we consider the variety $Z(I)\subset\R^d$ instead (it is a Zariski closed subset in $\op{maxSpec}R[y]$).
Notice that $I$ is a (finitely generated) polynomial ring with real coefficients (as $X$ and $y_i$ are real)
that is related to its complex counterpart $I_\CC$ as follows: $I=I_\CC\cap\R[y_1,\dots,y_d]$.
 \begin{lem}
The ideal $I\subset\R[y_1,\dots,y_d]$ is real radical and prime.
 \end{lem}

Recall \cite{D} that the real radical of an ideal $I$ in a (commutative) ring $\mathcal{R}$ is
 $$
\sqrt[R]{I}=\{f\in\mathcal{R}\,|\,\exists m,n\in\mathbb{N},\,k_i\in\R_+,\,u_i\in\mathcal{R}\,(1\le i\le n):
(1+\sum_{i=1}^n k_iu_i^2)f^m\in I\}
 $$
and $I$ is real radical whenever $\sqrt[R]{I}=I$.
\smallskip

 \begin{proof}
Clearly, if $f=f(y_1,\dots,y_d)\in\R[y_1,\dots,y_d]$ satisfies the above relation, then $f(x)=0$ $\forall x\in X$
whence $f\in I$, where we shorten $f(x)=f(y_1(x),\dots,y_d(x))$. Thus $I$ is real radical.

If for $f,g\in\R[y_1,\dots,y_d]$ we have $f\cdot g\in I$, then $f(x)\cdot g(x)=0$ $\forall x\in X$ and
(real) irreducibility of $X$ implies that either $f(x)=0$ $\forall x\in X$ or $g(x)=0$ $\forall x\in X$,
i.e.\ $f\in I$ or $g\in I$. This $I$ is prime.
 \end{proof}

Now the ideal of $Y=Z(I)\subset\R^d(y_1,\dots,y_d)$ is $I$ by the real version of Nullstellensatz \cite{D},
and $Y$ is real irreducible with the coordinate ring $R[y]=\R^d[y_1,\dots,y_d]/I$. Since $I$ is prime,
$R[y]$ is an integral domain and its field of quotients is easily identified with $\mathfrak{R}(X)^G$.
Thus we obtain the rational quotient $\bar{p}:X\to Y$ in the real case too
(if $Y_m^\CC$ is the rational quotient of the complexified action, then $Y_m=Y_m^\CC\cap\R^d$ is its real slice,
so the above arguments show the variety $Y^\CC$ is rationally equivalent to the complexification of the real variety $Y$).

 % Consider the field of rational invariants $\mathfrak{R}(X)^G$ and let its elements $y_1,\dots,y_d$
 % separate the general $G$-orbits. Denote $Y=\op{Spec}[y_1,\dots,y_d]$ and consider the dominant
 % rational mapping $\bar{p}:X\to Y$ defined by the embedding
 % $[y_1,\dots,y_d]\subset\mathfrak{R}(X)^G\subset\mathfrak{R}(X)$.
 % This morphism $\bar{p}$ is the rational quotient of $X$ by $G$,
 % having the universal property similar to the described above categorical quotient.
 % (but for the field of rational functions).
 % The base $Y_0$ of the geometric quotient can be embedded into $Y$ as a nonempty open subset.
 % Codimension of the general orbit of $G$ on $X$ equals $d=\dim Y$ and
 % can be also described as the {\it transcendence degree\/} of the field $\mathfrak{R}(X)^G$.

\medskip

Let us now apply this general theory to our (real) case.
Since the action of $G$ on $\E$ is transitive on the base, all orbits project onto $M$. So to study
the space of orbits on the level of $k$-jets it is enough
to restrict to one fiber $\E^k_a$ and the action of $G^k_a$ on it. We have:

 \begin{lem}
The algebraic manifold $\E^k_a$ is irreducible for all $k$ (and $a\in M$).
 \end{lem}

 \begin{proof}
Let us use the induction on $k$. The base of induction $k=l$ holds by our hypothesis.
Assume the claim on the jet-level $(k-1)$. Since (also by hypothesis) $\E$ is formally integrable,
$\E^k_a$ is an affine bundle over $\E^{k-1}_a$. Let $f,g$ be two polynomials in $k$-jets (with fixed point $a$),
and assume $(f\cdot g)(a_k)=0$ $\forall a_k\in\E^k_a$, and also $f\not\equiv0$.

For any point $a_k$ such that $f(a_k)\neq0$ consider the fiber through it:
$F(a_{k-1})=\pi_{k,k-1}^{-1}(a_{k-1})$, $a_{k-1}=\pi_{k,k-1}(a_k)$. Since it is affine (and hence irreducible),
$(f\cdot g)|_{F(a_{k-1})}=0$ implies $g|_{F(a_{k-1})}=0$. But $f(a_k)\neq0$ for Zariski generic $a_k\in\E^k_a$, so
$g=0$ on all fibers over a Zariski open set in $\E^{k-1}_a$. This by irreducibility of $\E^{k-1}_a$ implies
that $g$ is identically zero.
 \end{proof}

Our original assumption is that the real algebraic variety $\E^k_a$ is
irreducible, but this implies that its complexification is irreducible as
a complex algebraic variety. Indeed, consider the real ideal $I(\E_a^k)$
in the algebra of (inhomogeneous) polynomials on $ J^k_a$, corresponding to $\E_a^k$.
The corresponding complex ideal $I(\E_a^k)\otimes\CC$ in the algebra of
complex polynomials ${\mathcal{P}}(\CC^N)^*$, $N=\dim J^k_a$, has the zero-locus
${}^\CC\E_a^k\subset{}^\CC\!J^k_a$, which is the complexification of $\E_a^k$.
This complex variety is irreducible if $I(\E_a^k)\otimes\CC$ is prime, i.e.\
the algebra ${\mathcal{P}}(\CC^N)^*/(I(\E_a^k)\otimes\CC)$ is an integral domain.
Assume that $F\cdot G=0\mod(I(\E_a^k)\otimes\CC)$
for two complex polynomials $F,G$. Multiplying this by $\bar F\bar G$ and restricting to the real
variety $\E_a^k$ we get $|F|^2\cdot|G|^2=0$ along $\E_a^k$. By real irreducibility this yields
the required alternative $F=0$ or $G=0$.

Thus by the Rosenlicht's theorem, the algebraic action has a geometric quotient outside
a $G$-stable Zariski closed (and so nowhere dense) subset.

 \begin{prop}\label{RSL}
For every $k$ there exists a proper closed set $S_k\subset \E^k$ such that the complement admits
the geometric quotient $\bigl(\E^k\setminus S_k\bigr)/G^k\simeq Y^k_0$.
 \end{prop}

Notice that $S_k$ is Zariski closed in the sense that $S_k\cap\E^k_a$ is Zariski closed for every
$a\in M$. Then we identify $\bigl(\E^k\setminus S_k\bigr)/G^k$ with
$\bigl(\E^k_a\setminus(S_k\cap\E^k_a)\bigr)/G^k_a$ and the above algebraic machinery applies.

%%%%%%%%%%%%%%%%%%%%%%%%%%%%%%%%%%%%%%%%%%%%%%%%%%%%%%%%%%%%%%%%%%%%%%%%%%
\subsection{A symbolic system associated to $G$-orbits}\label{S22}

The orbit of the $G$-action through a $k$-jet $a_k\in\E_k$ is $G^k\cdot a_k\subset\E^k$.
Let $\Delta_k(a_k)=T_{a_k}(G^k\cdot a_k)$ be the tangent differential system. This is easily seen
to coincide with the distribution of evaluations of $k$-jets of the Lie algebra sheaf $\mathcal{G}$
of the pseudogroup $G$ at the point $a_k$: $\Delta_k(a_k)=\{X^{(k)}_{a_k}:X\in\mathcal{G}\}$.

The distribution $\Delta_k|_{\E_k'}\subset T\E_k'$ is obviously integrable on the
set of regular orbits $\E_k'\subset\E^k$. Differential invariants of
order $k$ are its first integrals. Notice that $\pi_{k,k-1}:\E_k'\to\E_{k-1}'$.
We call a point $a_k$ {\em regular\/} if $a_k\in\E_k'$.
By Rosenlicht's theorem \cite{R$_1$,SR} the set of regular points $\E_k'$ is Zariski open,
and the above first integrals separate the regular orbits of $G^k$
(recall that the action of $G^k$ on $\E^k$ is equivalent to the vertical action of $G^k_a$ on $\E^k_a$).

For a point $a_\infty\in\E^\infty$ consider a collection of subspaces
 $$
\varpi_k=\op{Ker}\bigl(d\pi_{k,k-1}:\Delta_k(a_k)\to\Delta_{k-1}(a_{k-1})\bigr)\subset S^k\t^*_a\ot\nu_a.
 $$

 \begin{prop}\label{prp4}
There exists an integer $l$ and a $G$-invariant Zariski open subset $\E_l''\subset\E^l$ such that
$\pi_{\infty,l}(a_\infty)=a_l\in\E_l''$ implies $\varpi_{k+1}\subset\varpi_k^{(1)}$ for all $k\ge l$.
 \end{prop}

When the point $a_\infty$ is fixed, this corresponds to Lemmata 22.5 and 23.1 of \cite{Kum}
(see also the idea of B.\,Malgrange on p.357; at this point nothing more than the usual
regularity is assumed in loc.cit.).
It is possible to show that with our algebraic assumptions, his proof extends to all points $a_\infty$
in a Zariski open subset, but we prove it independently for completeness.

\smallskip

 \begin{proof}
Denote by $\mathcal{G}^k_{a_1}$ the algebra of $k$-jets of vector fields from $\mathcal{G}$ vanishing at $a_1$
(this is a finite-dimensional Lie algebra). For any point $a\in M$ consider the bundle $\op{St}^k_{a}$ over
$\E^k_a$, whose fiber over $a_k$ is $\op{st}(a_k)=\{X^{(k)}\in\mathcal{G}^k_{a_1}:X^{(k)}_{a_k}=0\}$.
Consider the bundle $V^\tau_a$ over $\E^1_a\subset\op{Gr}_n(T_aM)$ whose fiber over $a_1$
is $\op{End}(\tau_a)$ (recall that we identify $a_1$ with $\tau_a$). Define the map
$\psi_k:\op{St}^k_a\to V^\tau_a$ by the formula $X\mapsto d_aX$. Both spaces $\op{St}^k_a$ and $V^\tau_a$
are algebraic and so is the map $\psi_k$ (clearly $\op{Im}(\psi_k)\cap\op{End}(\tau_a)$
is a vector subspace in $\op{End}(\tau_a)\subset V^\tau_a$ $\forall a_1\in\E^1_a$).

By Chevalley's theorem on constructible sets \cite[Corollary 14.7]{E} the Zariski closure 
$\Psi^k_a$ of $\op{Im}(\psi_k)$ has this image as an open dense subset (in the usual topology).
Since the sequence of algebraic varieties $\Psi^k_a$ is nested (decreasing
because $X^{(k+1)}_{a_{k+1}}=0$ $\Rightarrow$ $X^{(k)}_{a_k}=0$) it stabilizes: for some $l$ we have
$\Psi^k_a=\Psi^l_a$ for $k\ge l$. We can choose a finite number of rational sections of $\op{St}^l_a$
whose $\psi_l$-images span $\Psi^l_a$.
The open set over which the sections are defined is $(\E_l'')_a$, and uniting these over all
$a\in M$ we obtain the set $\E_l''\subset\E^l$. For every $a_k\in\E^k$ with $\pi_{k,l}(a_k)=a_l\in\E''_l$
we get: $\psi_k(\op{st}(a_k))$ is the stable (independent of $k$) subspace $\psi_l(\op{st}(a_l))\subset\op{End}(\tau_a)$.
In other words, for any $\pi_{k+1,k}(a_{k+1})=a_k$ with $\pi_{k,l}(a_k)\in\E_l''$ and $X\in\op{st}(a_k)$
there exists $\tilde X\in\op{st}(a_{k+1})$ such that $\psi_{k+1}(\tilde X^{(k+1)})=\psi_k(X^{(k)})$,
i.e.\ $d_a\tilde X=d_aX$.

The proposition claims that the sequence $0\to\varpi_k\stackrel{\d}\to\varpi_{k-1}\ot\t^*$ is exact.
By injectivity of the Spencer $\d$-differential at the first term, this is equivalent to
$\delta_\xi:\varpi_k\to\varpi_{k-1}$ $\forall\xi\in\t$.
Consider $X\in\mathcal{G}$ with the lift $q_k=X^{(k)}_{a_k}\in\Delta(a_k)$
to the point $a_k\in\E^k$ such that $X^{(k-1)}_{a_{k-1}}=0$, i.e.\ $q_k\in\varpi_k$
(in particular, $X_a=0$). We can modify $X$ by the above $\tilde X$ to achieve $d_aX=0$.
For $Y\in\mathcal{G}$ with $\xi=Y(a)\in\t_a$
the Lie bracket $L_YX=[Y,X]$ lifts to $q_{k-1}=[Y,X]^{(k-1)}_{a_{k-1}}$ at $a_{k-1}\in\E^{k-1}$.
Moreover $q_{k-1}$ depends only on the values $\xi$ and $q_k$, and $[Y,X]^{(k-2)}_{a_{k-2}}=0$.
This means that $q_{k-1}\in\varpi_{k-1}$, and provided $a_k$ is regular, we conclude: $\delta_\xi(q_k)=q_{k-1}$.

Let us explain the above abstract argument in local coordinates $(x^i,u^j)$ on $M$, when the submanifolds
write as $N=\{u=h(x)\}$. The vector fields from $\mathcal{G}$ have the form $X=\a^i(x,u)\p_{x^i}+\b^j(x,u)\p_{u^j}$
and their prolongations are
 $$
X^{(k)}=\a^i\D_i^{(k+1)}+\sum_{|\z|\le k}\D_\z(\vp_X^j)\p_{u^j_{\z}},
 $$
where $\D_i^{(k+1)}=\p_{x^i}+\sum_{|\z|\le k}u^j_{\z i}\p_{u^j_{\z}}$ is the truncated total derivative,
$\D_\z=\D_{i_1}\cdots\D_{i_s}$ for a multi-index $\z=(i_1,\dots,i_s)$ is the iterated total derivative and
$\vp_X^j=\b^j-\a^i u^j_i$ is the generating function\footnote{In several sources, e.g.\ \cite{Ol$_1$},
$\vp$ is called characteristic, but in our case this would lead to a confusion with characteristic covectors,
and we use the terminology adapted in \cite{KLV}.} of (the lift of) $X$.
In other words, the generating function (section) $\vp_X\in C^\infty_{\text{loc}}(J^1(M,n),\nu)$ is given
by the formula $\vp_X(a_1)=X_a\,\op{mod}\t_a$,
where $a_1=[N]_a^1$, $\t_a=T_aN$.

Denote restriction of $\vp_X$ to the submanifold $N$ by
$\vp^j_{X|h}=\b^j_{|h}-\a^i_{|h}h^j_i(x)$, where
$\a^i_{|h}(x)=\a^i(x,h(x))$, $\b^j_{|h}(x)=\b^j(x,h(x))$.
The condition $X\in\op{st}(a_k)$ is equivalent to $\a^i(a)=0$, $\vp^j_{X|h}\in\mu^k_a$; here $\mu_a$ is the
(maximal) ideal of functions vanishing at $a\in M$. If $k\ge l$, then $X$ modulo $\op{st}(a_{k+1})$ can be presented as
a vector field from $\mathcal{G}$ with $\a^i(a)=0$ and $d_a\a^i|_{\tau_a}=0$. In this case
 \begin{multline*}
[Y,X]_{a_{k-1}}^{(k-1)}=[Y^{(k-1)},X^{(k-1)}]_{a_{k-1}}=L_Y(\a^i_{|h})_a\D_i^{(k)}\\
+\sum_{|\z|\leq k-1}(L_Y\D_\z(\vp^j_{X|h}))_{a_k}\p_{u^j_{\z}}
=\sum_{|\z|=k-1}(\D_\z(L_Y\vp_{X|h}^j))_{a_k}\p_{u^j_{\z}}
 \end{multline*}
If $X$ corresponds to $q_k\in\varpi_k$ and $Y$ to $\xi\in\tau_a$, then the last expression corresponds to
$q_{k-1}=\delta_\xi(q_k)\in S^{k-1}\tau_a^*\ot\nu_a$, and it belongs to $\varpi_{k-1}$.
 \end{proof}

 \begin{remark}
Thus $\varpi(a_\infty)_{\ge l}=\{\varpi_k(a_\infty)\}_{k=l}^\infty$ is a symbolic system from the level $l$.
Define $\tilde\varpi_k=\varpi_k$ for $k\ge l$ and $\tilde\varpi_k=\{\delta_\xi\tilde\varpi_{k+1}:\xi\in\tau\}$
successively for $k=l-1,\dots,0$. The new system $\tilde\varpi=\{\tilde\varpi_k\}_{k\ge0}$ is symbolic
(from the level 0); we call it the completed symbolic system.
 \end{remark}

The next statement can have a different meaning of $l$ (and $\E''_l$) than that used
in Proposition \ref{prp4} (the one from above will be denoted $l_0$ below).

 \begin{theorem}\label{trma}
There exists an integer $l$ and a $G$-invariant Zariski open subset $\E_l''\subset\E^l$ such that
$\pi_{\infty,l}(a_\infty)=a_l\in\E_l''$ implies $\varpi_{k+1}=\varpi_k^{(1)}$ for all $k\ge l$.
 \end{theorem}

The proof below has the following idea. We consider the algebra $\mathfrak{A}$ of polynomials
on the bundle $\t^*$ over $\E^\infty$ with coefficients in the field of rational functions on $\E^\infty$
(i.e.\ the algebra of sections of $S\tau$).
Then we define the space $\mathfrak{M}$ of sections of the sheaf $\varpi^*_{[l_0]}$ over $\E^\infty$ with
the same rational coefficients and prove it is a Noetherian module over $\mathfrak{A}$
(this is done via evaluations at generic points since $\varpi^*_{[l_0]}$ is a Noetherian module over $S\t$).
Then the generating set gives the required $G$-invariant neighborhood in $\E^\infty$
and the symbolic property for $\varpi^*$.

\smallskip

 \begin{proof}
Fix a point $a\in M$ (its choice is not important because the pseudogroup $G$ acts transitively on $M$).
We will work with algebraic manifolds $\E_a^k$, which we can complexify. Since the statement
over $\CC$ clearly implies that one over $\R$, we assume till the end of the proof that all our systems are
complex.

Consider the tautological bundle $\t$ over $\E^1_a\subset\op{Gr}_n(T_aM)$ and denote the space of its 
rational % (=meromorphic) 
sections by $\Gamma(\tau)$. Similarly, $\Gamma(S\tau)$ is the algebra of sections of the associated
symmetric bundle, and it can be identified with the graded commutative algebra of
functions on the bundle $\tau^*$, that are polynomial on the fibers.
Extending the coefficients to the field $\mathfrak{R}(\E_a^\infty)$ of rational functions on $\E_a^\infty$,
we obtain the graded commutative algebra
 $$
\mathfrak{A}=\mathfrak{R}(\E_a^\infty)\ot_{\mathfrak{R}(\E_a^1)}\Gamma(S\tau)=\sum_{i=0}^\infty\mathfrak{A}_i,\quad
\text{where }\mathfrak{A}_i=\mathfrak{R}(\E_a^\infty)\ot_{\mathfrak{R}(\E_a^1)}\Gamma(S^i\tau)
 $$
(here and below by an abuse of notations
$\tau(a_\infty)=a_1\equiv\t_a$ is the lifted to $\E^\infty$ tautological bundle and $\t_a^*$ is its dual).

Consider the space $\Omega^1(\E_a^k)$ of rational 1-forms on $\E_a^k$ and
its subspace $\Omega^1_h(\E_a^k)$ consisting of the forms vanishing on $\Delta_k$.
The latter is generated by differentials of the rational 1st integrals of the distribution
$\Delta_k$, which by Rosenlicht's theorem are the rational invariants of
$G^k_a$ on $\E^k_a$. Both are vector spaces over $\mathfrak{R}(\E_a^k)$
and the factor-space $\Omega^1(\E_a^k)/\Omega^1_h(\E_a^k)$ can be identified with the
space of rational sections of $\Delta_k^*$. Extending the coefficients yields the space
 $$
\Gamma_t(\Delta_k^*)=\mathfrak{R}(\E_a^t)\ot_{\mathfrak{R}(\E_a^k)}
[\Omega^1(\E_a^k)/\Omega^1_h(\E_a^k)],\quad k\le t\le\infty.
 $$
Furthermore the distribution dual to the distribution $\varpi_k$ is given by
 $$
\varpi_k^*(a_k)=\op{CoKer}(d\pi^*_{k,k-1}:\Delta^*_{k-1}(a_{k-1})\to\Delta^*_k(a_k)),
 $$
and its module of rational sections with coefficients extended to the $t$-th jets is
 $$
\Gamma_t(\varpi_k^*)=\mathfrak{R}(\E_a^t)\ot_{\mathfrak{R}(\E_a^k)}
[\Omega^1(\E_a^k)/(\Omega^1(\E_a^{k-1})+\Omega^1_h(\E_a^k))].
 $$

Let $\pi_\E^*\varpi_k^*$ be the pullback to $\E_a^\infty$ of the system $\varpi_k^*$ on $\E^k_a$ 
(symbolic from the jet-level $l_0$ that is the number from Proposition \ref{prp4}). Define 
$\varpi^*_{[l_0]}=\sum_{k\ge l_0}\pi^*_\E\varpi_k^*$ (alternatively consider 
$\tilde\varpi^*=\sum_{k\ge0}\pi^*_\E\tilde\varpi_k^*$ the dual completed system).
Consider the vector space over $\mathfrak{R}(\E_a^\infty)$ 
 $$
\mathfrak{M}=\Gamma_\infty(\varpi^*_{[l_0]})=\sum_{k=l_0}^\infty\mathfrak{M}_k,\quad
\mathfrak{M}_k=\Gamma_\infty(\varpi_k^*)=
\Gamma_\infty(\Delta_k^*)/\Gamma_\infty(\Delta_{k-1}^*).
 $$
This $\mathfrak{M}$ is a graded module over $\mathfrak{A}$.
To see this notice that on the Zariski open set $\pi_{\infty,k}^{-1}(\E_k')\subset\E^\infty$
the set $\{\varpi_i,l_0\le i\le k\}$ is a symbolic system, meaning that
$\delta^*:\tau\ot\varpi_{i-1}^*\to\varpi_i^*$, $i>l_0$. Since the restriction map
(where the second space of sections of the restricted system is defined similar to the first space)
 $$\Gamma_\infty(\varpi_k^*)\to\Gamma_\infty(\pi_\E^*\varpi_k^*|\pi_{\infty,k}^{-1}(\E_k'))$$
is an isomorphism (because the value of a rational section is entirely determined by its values on a Zariski
open subset), we conclude that the morphism $\delta^*$ induces the rational map
$\mathfrak{A}_1\ot\mathfrak{M}_k\to\mathfrak{M}_{k+1}$ (this follows from evaluation at a generic point)
and hence by iteration the rational map $\mathfrak{A}_l\ot\mathfrak{M}_k\to\mathfrak{M}_{k+l}$
compatible with the algebra structure on $\mathfrak{A}=\sum\mathfrak{A}_l$.
This proves the claim.

Moreover the module $\mathfrak{M}$ is a quotient of the Noetherian graded $\mathfrak{A}$-module
$\mathfrak{R}(\E^\infty_a)\ot\Gamma(S_{[l_0]}\tau\ot\nu^*)$, where $S_{[l_0]}\tau=\oplus_{k\ge l_0}S^k\tau$
(due to the projection dual to the injective map $\varpi\hookrightarrow S\t^*\ot\nu$).
Consequently the $\mathfrak{A}$-module $\mathfrak{M}$ is Noetherian.
 % beware the ground field $\mathfrak{R}(\E^\infty_a)$ has infinite transcendence dimension but it is ok
This means that there exists a basis $\omega_1,\dots,\omega_s$ in $\mathfrak{M}$
such that any element $\omega\in\mathfrak{M}$ is a linear combination
$\omega=\sum_1^sf_i\omega_i$ for some (unique) $f_i\in\mathfrak{A}$.

Let $\E''_a\subset\E_a^\infty$ be the Zariski open set of all points $a_\infty$, where the basis
elements $\omega_1,\dots,\omega_s$ are defined (= finite) and linearly independent (over the ring $S\t_a$).
Since the coefficients of these elements are rational functions,
this set is given by a finite jet condition, meaning there exists a number $l$ and
a Zariski open nonempty set $\E_{l,a}''$ such that $\E''_a=\pi_{\infty,l}^{-1}(\E_{l,a}'')$.
In what follows we also assume that the coefficients of $\omega_i$ are from the field $\mathfrak{R}(\E^l_a)$.

We claim that if the evaluation $\omega(a_\infty)$ is defined (= finite) at $a_\infty\in\E''_a$, then
all the coefficients of the decomposition $\omega(a_\infty)=\sum_1^sf_i(a_\infty)\omega_i(a_\infty)$
are also defined. Indeed, the opposite case means that we can decompose
$f_i=\sum_j\frac{p_{ij}}{q_{ij}}r_{ij}$, where $p_{ij},q_{ij}$ are polynomial functions on
$\E_a^\infty$, $r_{ij}$ are sections of $S\t_a$
nonvanishing at $a_\infty$, and $q_{st}(a_\infty)=0$ for some $s,t$
(as usual, the numerators $p_{ij}$ and denominators $q_{ij}$ have no common factors).

Let $Q$ be an irreducible factor of the polynomial $q_{st}$ such that $Q(a_\infty)=0$.
 % (with the above $i,j$).
By the Hilbert zeros theorem $p_{st}$ does not vanish identically on the algebraic variety $Q=0$
(in finite jets: all the involved polynomials are defined on a finite jet level,
so Hilbert's theorem applies). Similarly the other numerators and denominators
either have the factor $Q$ or do not vanish identically on this variety.
Thus we can perturb the point $a_\infty\in\E''_a$ to a point $a'_\infty\in\E''_a$
such that among the irreducible factors of $p_{ij},q_{ij}$ only $Q$ vanishes at $a'_\infty$.
Let $k$ be the maximal degree of the factor $Q$ among all denominators $q_{ij}$.
Then the relation $\sum_{ij}Q^k\frac{p_{ij}}{q_{ij}}r_{ij}\omega_i=Q^k\omega$ evaluated
at $a'_\infty$ is a linear relation (nontrivial: not all coefficients of the left-hand side vanish)
among the generators $\omega_i(a_\infty')$.
This is prohibited at $a_\infty'\in\E''_a$, and this contradiction implies the claim.

Next, we claim that for
$\omega\in\Gamma_k(\varpi_k^*)=\Gamma_k(\Delta_k^*)/\Gamma_k(\Delta_{k-1}^*)$, $k\ge l$,
the coefficients of decomposition $\omega=\sum_1^s f_i\omega_i$ are rational functions on
$\E^k_a$. Indeed, both $\omega$ and $\omega_i$ satisfy this, so if some $f_i$ depend on a
jet-coordinate of order $>k$, we can Taylor-expand in this variable and get linear relations
among $\omega_i$. This contradicts to the choice of $\omega_i$ (it is a basis).

For every $a_\infty\in\E''_a$ and every $\omega_o\in\varpi(a_\infty)$ there is an element  $\omega\in\mathfrak{M}$ that evaluates to $\omega(a_\infty)=\omega_o$.
Let $\omega_o=\sum_1^sf_i(a_\infty)\omega_i(a_\infty)$ be the corresponding decomposition.
By the above, all coefficients are defined. Moreover, by the previous remark the
evaluation at $a_\infty$ boils down to evaluation at $a_k=\pi_{\infty,k}(a_\infty)$.
Therefore we conclude that $\varpi_l^*(a_\infty)$ generates all $\varpi_k^*(a_\infty)$ for $k\ge l$.

In other words, the first Koszul homology vanish in the range $k\ge l$
(in fact, it vanishes in the range $k\ge r$, where $r$ is the biggest degree of $\omega_i$).
By duality, this implies vanishing of the first Spencer cohomology
$H^{1,i}(\varpi(a_\infty))=0$ for $i\ge l$. This means precisely that
$\varpi_{k+1}=\varpi_k^{(1)}$ for all $a_\infty\in\E''_a$.

Finally the generating property is clearly $G$-invariant, so letting
$\E''=G\cdot\E''_a=\pi_{\infty,l}^{-1}(\E''_l)$
for $\E''_l=G^l\cdot\E''_{l,a}$ we obtain the required Zariski open set.
 \end{proof}

\smallskip

Thus we have proved that for a Zariski open (nonempty) set in $\E^\infty$ the symbolic system
$\varpi(a_\infty)_{\ge l_0}$ (or the completed symbolic system $\tilde\varpi(a_\infty)$) is stable.
By the Poincar\'e $\d$-lemma \cite{S$_2$} we conclude the following

 \begin{cor}\label{cor10}
There exist $l\in\Z_+$ and a Zariski open $\E''=\pi_{\infty,l}^{-1}(\E''_l)\subset\E$
such that the Spencer cohomology groups $H^{i,j}(\varpi)$, $i\ge l$, $j\ge0$, vanish $\forall$ $a_\infty\in\E''$.
 \end{cor}

%%%%%%%%%%%%%%%%%%%%%%%%%%%%%%%%%%%%%%%%%%%%%%%%%%%%%%%%%%%%%%%%%%%%%%%%%%
\subsection{Existence of invariant derivations}\label{S23}

Recall that $\t_k=\pi_{k,1}^*\t$ at the point $a_k\in\E^k$ can be identified with the horizontal
plane $L(a_k)\subset T_{a_{k-1}}\E^{k-1}$, so we have two bundles $\t_k$ and $\t_k^*$ over $\E^k$.
Define the family of subspaces
 $$
\a_k^*(a_k)
 %=\Theta_{k-1}(a_k)|_{L(a_k)}
=\{\hat{d}_{a_k}f|f\in\mathcal{A}_{k-1}\}\subset\t^*_k(a_k).
 $$
This is a sub-distribution in the distribution of evaluations of $G$-invariant
horizontal 1-forms on the Zariski open set $\E_k''$.
Pullback by the projection induces the inclusions $\a_k^*(a_k)\subset\a_{k+1}^*(a_{k+1})$
for $a_{k+1}\in F(a_k)$.
Thus we have the canonically defined limit
 $$
\a^*(a_\infty)=\lim_{k\to\infty}\a^*_k(a_k)=\cup\,\a^*_k(a_k).
 $$
This increasing flag of distributions on $\E^\infty$ stabilizes in finite jets:

 \begin{theorem}\label{P10+}
There exists a number $l$ and a Zariski open non-empty subset\footnote{To keep the notations
simple, we denote the required subset in the set of regular points by the same symbol $\E_l''\subset\E_l'$
(we also abuse the notations by re-defining $l$ several times).} $\E_l''\subset\E^l$ such
that $\a^*_l\subset\t^*_l|_{\E_l''}$ is a regular smooth sub-distribution and that rank of $\a^*_k$
at $a_k\in\pi_{k,l}^{-1}(\E_l'')$ is independent of $k\ge l$ and a choice of the point $a_k$.
 \end{theorem}

The proof uses the same arguments as in Theorem~\ref{trma} and so is omitted.
Define the following distributions (they are regular smooth on $\E_k''$):
 $$
\a_k^\vee(a_k)=\op{Ann}(\a_k^*(a_k))\subset\t_k(a_k)\quad\text{ and }\quad
\a_k(a_k)=\t(a_k)/\a_k^\vee(a_k)
 $$
Then we define the limits
 $$
\a^\vee(a_\infty)=\lim_{k\to\infty}\a_k^\vee(a_k)\quad\text{ and }\quad
\a(a_\infty)=\lim_{k\to\infty}\a_k(a_k).
 $$
Notice that the distribution $\a$ is dual to $\a^*$, but it is not
a sub-distribution in $\t$ in a natural way. Thus its sections can be represented by
horizontal vector fields ($\mathcal{C}$-fields) defined modulo the sections of $\a^\vee$.

It is important to notice that these latter sections by definition annihilate
the algebra $\A$ of differential invariants (they correspond to $\mathfrak{N}_0$
of Definition \ref{DfnDer}), so independently of the representative from
$\a^\vee$ such $\mathcal{C}$-fields map $\A$ to itself. In addition, these derivations have
filtration $+1$, i.e.\ map $\A_i\to\A_{i+1}$ for sufficiently large $i$, and so
produce new differential invariants from any given set.

Denote the stable rank of the distribution $\a_l|_{\E_l''}$ by
$s=\sup\dim\a(\a_\infty)$, this is also the rank of the distribution $\a^*$.

 \begin{theorem}\label{tHmT}
On an open dense subset $\E''_\infty\subset\E^\infty$ there
exist $s=\dim(\a)$ independent invariant derivations $\nabla_i:\A\to\A$,
$i=1,\dots,s$. Any other derivation can be expressed as a linear combination of these with
coefficients from $\A$.
 \end{theorem}

In this statement the algebra $\A$ of differential invariants is defined as $\mathfrak{P}_l(\E)^G$
with $l$ from Theorem \ref{P10+} increased by 1. Derivations $\nabla_i$ are rational
(in the fiber jet-variables) sections of the bundle $\a$ over the Zariski open set
$\E''_\infty=\pi_{\infty,l}^{-1}(\E''_l)$, the module of these sections is defined as in the proof
of Theorem~\ref{trma}.

\smallskip

 \begin{proof}
The natural pairing between $\a=\t/\a^\vee$ and $\a^*$ corresponds to derivations:
for $v\in\a$ and $\hat d f\in\a^*$ their inner product is $i_v\hat df=v(f)$, $f\in\A$.
This paring is non-degenerate by the duality (or definition of $\a^\vee$).
 % since the symbol of $v(f)$ corresponds to the product of the symbols of $v$ and $f$ in the symbolic module
 % $\mathcal{M}_\E=g^*$ of $\E$, and $v\notin\a^\vee$ does not annihilate the support of $\mathcal{M}_\E$.

Thus any basis $\hat d f_1,\dots,\hat d f_s$ of $\a^*$
(with some choice of differential invariants $\{f_i\}_{i=1}^s$ in $\A$)
gives rise to a basis of $\a$ (at a generic point), which consists of invariant derivations
with rational functional coefficients.
The construction is similar to the Tresse derivatives from Section \ref{S14}.

Indeed, we have $f_1,\dots,f_s\in\A_l$ and can suppose that
$\op{rank}(\hat{d}f_1,\dots,\hat{d}f_s)=s$ on $\E''_l$
(otherwise we shrink this set by removing a Zariski closed subset).
Let us complete this collection to a horizontal basis by some functions $g_{s+1},\dots,g_n$,
namely we get $\op{rank}(\hat{d}f_1,\dots,\hat{d}f_s,\hat{d}g_{s+1},\dots,\hat{d}g_n)=n$ on
$\E''_l$ (again it is possible that the latter set should decrease).

Now we form the Tresse derivatives
$\hat\p/\hat\p f_1,\dots,\hat\p/\hat\p f_s,\hat\p/\hat\p g_{s+1},\dots,\hat\p/\hat\p g_n$,
consisting of the horizontal vector fields (derivatives) dual to the above basis of horizontal
differentials. While the $\mathcal{C}$-fields $\hat\p/\hat\p f_i$ depend on the choice of $g_j$,
the derivations $\nabla_i=\hat\p/\hat\p f_i\mod\a^\vee$ do not depend on it.
They map $\A$ to itself and can be represented by combinations of total derivatives with
coefficients being rational functions in jets of order $\le l+1$ (but this representation by horizontal
vector fields is non-canonical, they are defined modulo $\a^\vee$; see example in \S\ref{S42}).

Suppose that $\nabla\in\mathfrak{Der}(\E,\t)$ is any other invariant derivation. Since the
above set of derivations is maximal we can decompose $\nabla=I_1\nabla_1+\dots+I_s\nabla_s$.
Then $I_i=\nabla(f_i)\in\A$, and we are done.
 \end{proof}

 \begin{remark}\label{Tresse-always}
It follows from the proof of the above theorem, that the basis derivations
$\nabla_1,\dots,\nabla_s$ can be always chosen to mutually commute.
 \end{remark}

The derivations $\nabla_i$ from Theorem \ref{tHmT} do not coincide in general with
invariant derivatives, interpreted as $G$-stable horizontal vector fields on $\E^\infty$.
However, it is possible to identify the invariant derivations with invariant $\mathcal{C}$-fields
under some additional assumptions.
One occasion is this: If $G^k$ is reductive, then its stabilizer
at generic point $a_k$ is such (theorems 7.12 and 7.15 in \cite{PV}) and so there exists an invariant
complement to $\alpha^\vee$ (lifted to $L(a_k)$ at all $a_k\in\E^k$).

Another possibility is given by a condition from \cite{KL$_2$}
similar to Kumpera's hypothesis H$_3$ \cite{Kum}. If
 $$
\op{Reg}_k^2(\E,G)=\{a_k\in\E^k\,|\, \exists a_{k+1}\in\E^{k+1}: \Delta_k(a_k)\cap L(a_{k+1})=0\}
 $$
is open and dense in $\E^k$ (for $k=l$ and so for all larger $k$), then the number of
invariant derivatives (horizontal vector fields) is $n$.

We return to this question in Section \ref{S33}, where a
sufficient condition for equality $\mathfrak{D}(\E,\t)=\mathfrak{Der}(\E,\t)$
will be given in Theorem \ref{Thmn}.

%%%%%%%%%%%%%%%%%%%%%%%%%%%%%%%%%%%%%%%%%%%%%%%%%%%%%%%%%%%%%%%%%%%%%%%%%%
\subsection{Symbolic cohomology of the orbit spaces}\label{S24}

Consider the space of orbits $\tilde{Y}^k=\E^k/G^k$. In general this space has a
complicated structure, but our action is algebraic and transitive on the base, so
$\tilde{Y}^k=\E^k_a/G_a^k$ is a rational quotient and the singularities are nowhere dense.
 % (these will be treated in more details in Section \ref{S3})
Moreover $Y^k=(\tilde{Y}^k)_0\subset\tilde{Y}^k$ is the geometric quotient
of the $G$-action on an open subset of $\E^k_a$ (subscript 0 denotes the regular part as in \S\ref{S21}).

Denote by $\mathfrak{d}_k$ the fiber of the projection $TY^k\to TY^{k-1}$.
This and the previous symbol spaces are united into the following commutative diagram
(all terms evaluated at respective points determined by a regular point $a_k\in\E_k'$)
 \begin{equation}\label{130300}
  \begin{CD}
 @. 0 @. 0 @. 0  \\
@. @VVV  @VVV @VVV @.\\
 0\!\! @>>> \varpi_k @>>> \Delta_k @>>> \Delta_{k-1} @>>> 0 \\
@. @V{\ll}VV  @VVV @VVV @.\\
 0\!\! @>>> g_k @>>>  T\E^k @>>> T\E^{k-1} @>>> 0 \\
@. @VVV  @VVV @VVV @.\\
 0\!\! @>>> {\frak d}_k @>>> TY^k @>>> TY^{k-1} @>>> 0   \\
@. @VVV  @VVV @VVV @.\\
 @. 0 @. 0 @. 0  \\
  \end{CD}
 \end{equation}
Its rows are exact by definition and since the last two columns are exact, the first column is exact too.
The morphism $\ll$ is given by the action of the Lie algebra sheaf $\mathcal{G}$ of the pseudogroup $G$.

 \begin{prop}\label{P9}
Letting $\T_j(a_k)=\{d_{a_k}f\,|\,f\in\mathcal{A}_j\}\subset T_{a_k}^*\E^k$ we have:
 $$
\mathfrak{d}_k=(\T_k/\T_{k-1})^*,
 $$
 \end{prop}

 \begin{proof}
The epimorphism $\T_k\to\mathfrak{d}_k^*$ is obvious. We have to prove that its kernel is
$\T_{k-1}$. Since only evaluations of the differentials are involved, we can change (in this proof only)
the algebra $\mathcal{A}_i$ to the field of rational differential invariants of order $i$,
which we denote by the same symbol.

Let us choose a transcendental basis $h_1,\dots,h_s$ of the field $\mathcal{A}_{k-1}$
and complete it to a basis $h_1,\dots,h_s,f_1,\dots,f_t$ of the field $\mathcal{A}_k$.
We want to prove $F\in \mathcal{A}_k$, $d_{a_k}F|_{g_k}=0$ $\Rightarrow$ $d_{a_k}F=d_{a_k}H$ for some
$H\in \mathcal{A}_{k-1}$ at a generic point $a_k\in\E^k$ (where the above functions are
defined and have independent differentials).

We can express $F=Q(h_1,\dots,h_s,f_1,\dots,f_t)$ for some algebraic function $Q$
(this can involve algebraic extensions). Then
$d_{a_k}F|_{g_k}=\sum Q_{f_i}(a_k)\cdot d_{a_k}f_i=0$ implies $Q_{f_i}(a_k)=0$.
Let $H=Q(h,c_k)\in\mathcal{A}_{k-1}$ be the function obtained by freezing the argument $f$:
$c_k=f(a_k)$. Then $d_{a_k}H=d_{a_k}F$.
 \end{proof}

\smallskip

With this representation, $\mathfrak{d}_k^*=\T_k/\T_{k-1}$ are interpreted as the space of symbols of
differential invariants (at regular points).
Then the Koszul homomorphism $g_k^*\ot\t_a\to g_{k+1}^*$
(obtained from the epimorphism $S^k\t_a\ot\nu^*_a\to g_k^*$)
induces the natural map
 \begin{equation}\label{tresse}
\delta^*:\mathfrak{d}_k^*\ot\t\to\mathfrak{d}_{k+1}^*,
 \end{equation}
which is interpreted as the symbol of the invariant derivation.

Let us now combine the exact 3-sequences of the first column (\ref{130300})
with the corresponding Spencer $\d$-complexes into the commutative diagram ($k\ge n+l$):
 \begin{equation}\label{130301}
  \begin{CD}
 @. 0 @. 0 @. 0  \\
@. @VVV  @VVV @VVV @.\\
 0\!\! @>>> \varpi_k @>{\d}>> \varpi_{k-1}\ot\t^* @>{\d}>> \varpi_{k-2}\ot\La^2\t^* @>{\d}>> \!\!\dots \\
@. @VVV  @VVV @VVV @.\\
 0\!\! @>>> \!\!g_k\!\! @>{\d}>>  \!\!g_{k-1}\ot\t^*\!\! @>{\d}>> \!\!g_{k-2}\ot\La^2\t^*\!\! @>{\d}>> \!\!\dots \\
@. @VVV  @VVV @VVV @.\\
 0\!\! @>>> {\frak d}_k @>{\d}>> {\frak d}_{k-1}\ot\t^* @>{\d}>> {\frak d}_{k-2}\ot\La^2\t^* @>{\d}>> \!\!\dots \\
@. @VVV  @VVV @VVV @.\\
 @. 0 @. 0 @. 0  \\
  \end{CD}
 \end{equation}

The cohomology of the bottom complex, induced by the map $\d$ dual to (\ref{tresse}),
will be called the symbolic cohomology of the orbit spaces
(its vanishing in the stable range implies stabilization of certain Spencer-like D-cohomology),
and will be denoted $H^{i,j}({\frak d})$ according to the bi-grading.

 \begin{theorem}\label{refpro}
There exists a number $l$ and a Zariski open subset $\E_l''\subset\E^l$
such that for all points $a_k$ with $\pi_{k,l}(a_k)=a_l\in\E_l''$
the $\d$-cohomology groups $H^{i,j}({\frak d})$ vanish in the range $i\ge l,j\ge0$.
 \end{theorem}

 \begin{proof}
When $l$ is at least the involutivity order for $\E$, i.e.\ $H^{i,j}(g)=0$ for $i\ge l,j\ge0$, then
the standard diagram chase (snake lemma) implies isomorphism of the $\d$-cohomology groups
$H^{i,j+1}(\varpi)=H^{i+1,j}({\frak d})$. If in addition $l$ satisfies the assumption of
Corollary \ref{cor10} and Theorem \ref{tHmT}, then we get $H^{i,j}({\frak d})=0$
in the range $i\ge l,j\ge0$ over the same Zariski open set $\E_l''$
(common for both Corollary \ref{cor10} and Theorem \ref{tHmT}).
 \end{proof}

\smallskip

Thus for $k\ge l$ the number $\dim{\frak d}_k$ grows polynomially
(this Hilbert polynomial will be discussed in \S\ref{S35}). Consequently,
the number of independent differential invariants of order $\le k$
(generators in $\mathcal{A}_k$) grows regularly, and this is the underlying
phenomenon for the Lie-Tresse theorem.

 % Now we are ready to prove the main result of this Section.

%%%%%%%%%%%%%%%%%%%%%%%%%%%%%%%%%%%%%%%%%%%%%%%%%%%%%%%%%%%%%%%%%%%%%%%%%%
\subsection{An example of computations}\label{S25}

To illustrate the introduced notions let us consider the pseudogroup $G$ parametrized by 2 functions of 1 argument
and acting on $M=\R^3$ as follows:
 $$
g\cdot(x,y,z)=\left(X(x),y+Y(x),\frac{u}{X'(x)}\right).
 $$
This corresponds to the action of the (local) diffeomorphism group on $\R^2(x,y)$ preserving the
foliation $\{x=\op{const}\}$ and the length along these fibers, lifted to $\R^3(x,y,u)$ via the induced action on
densities on the base of the foliation: $u\,dx$.

If we restrict to $Y\equiv0$ this becomes the well-studied example by S.\,Lie, A.\,Tresse and A.\,Kumpera \cite{Kum}.
This example is rather simple\footnote{Though the Lie-Tresse-Kumpera pseudogroup
is not transitive (and so formally our theorems are not applicable),
this is the case where our theory still works.}: there is only one singular obit $\{u=0\}$ and the
algebra of invariants is generated by 1 differential invariant and two invariant derivations.

In the presence of $Y$ the situation is more interesting. We will consider the action on the submanifolds
transversal to the fibers of the projection $\pi:\R^3(x,y,u)\to\R^2(x,y)$, so we study the differential
invariants in the space $J^\infty\pi$. The Lie algebra sheaf of $G$ is
 $$
\mathcal{G}=\{\a(x)\p_x+\b(x)\p_y-\a'(x)u\p_u:a,b\in C^\infty(\R)\}.
 $$

The pseudogroup $G$ is determined by the algebraic differential equations
 $$
\frac{\partial X}{\partial y}=\frac{\partial X}{\partial u}=
\frac{\partial Y}{\partial u}=\frac{\partial U}{\partial y}=0,\
\frac{\partial Y}{\partial y}=1,\ \frac{\partial U}{\partial u}\cdot\frac{\partial X}{\partial x}=1,\
u\frac{\partial U}{\partial u}=U,
 $$
and so is algebraic. Hence the lifted action in $J^{k}\pi$ are algebraic for all $k>0$.

It is easy to see that this $G$-action on $J^0\pi=M$ has two orbits:
regular (open) $\E'_0=\{u\neq 0\}$ and singular (closed) $S_0=\{u=0\}$
(the function $u$ is a relative invariant of the action).
The quotient $J^0\pi/G$ is the two-point Sierpi\'{n}ski space, while the geometric quotient
$\E'_0/G$ is the one-point space.

Denote by $\op{St}(a_k)$ the stabilizer of the pseudogroup $G_a$ at the point $a_k$, and by
$\op{Is}(a_k)=\op{Lie}(\op{St}(a_k))\subset\mathcal{G}_a$ the isotropy algebra.
Using the jet-notations $X_i$ for $X^{(i)}$ and $Y_i$ for $Y^{(i)}$,
we have for $a\in\E'_0$: $\op{St}(a)=\{X=x,Y=0,X_1=1\}$, and the action of $\op{St}(a)$ on $F(a)$ is:
 $$
(u_x,u_y)\mapsto (U_x,U_y)=(u_x-X_2u-Y_1u_y,u_y).
 $$
Thus all orbits are regular, and there is 1 first order differential invariant
 $$
I_1=\frac{u_y}u.
 $$
Next $\op{St}(a_1)=\{X=x,Y=0,X_1=1,X_2=-I_1Y_1\}$ and its action on $F(a_1)$ is:
 $$
(u_{xx},u_{xy},u_{yy})\mapsto(U_{xx},U_{xy},U_{yy})=
(u_{xx}+...-X_3u,u_{xy}-Y_1uI_2,u_{yy}),
 $$
where by ... we denote inessential terms, and
 $$
I_2=D_yI_1
 $$
is another invariant. The orbits are regular iff $I_2\neq0$, i.e.\ $\mathcal{E}_2'=\{u\neq0,I_2\neq0\}$.
On the singular stratum $S_2=\{u\neq0,I_2=0\}$ dimensions of the orbit drop.

In higher jets the behavior stabilizers: $\mathcal{E}_k'=\pi^{-1}_{k,2}(\mathcal{E}'_2)$ and
$\op{St}(a_k)=\{X=x,X_1=1,X_2=\dots=X_k=0,Y=Y_1=\dots=Y_{k-1}=0,X_{k+1}=-I_1Y_k\}$,
and the action of the stabilizer on $F(a_k)$ is
 $$
u_{i,k+1-i}\mapsto U_{i,k+1-i}=\left\{
\begin{array}{l} u_{k+1,0}+...-X_{k+2}u,\ i=k+1\\
u_{k,1}-Y_kuI_2,\ i=k\\ u_{i,k+1-i},\ i<k\end{array}\right.
 $$
(with jet-notations $u_{i,j}$ for $\partial_x^i\partial_y^ju$).
This can be also obtained from the Lie algebra action, since $\op{Is}(a_k)$ acts on $F(a_k)$
by shifts along the fields $\partial_{u_{k+1,0}},\partial_{u_{k,1}}$.

The invariant derivations are
 $$
\nabla_1=D_y(I_1)D_x-D_x(I_1)D_y,\ \nabla_2=D_y
 $$
and we get the following differential invariants according to their orders:
 \begin{gather*}
I_1; \ \ I_2=\nabla_2I_1; \ \
I_{3a}=\nabla_1I_2,\ I_{3b}=\nabla_2I_2; \\
I_{4a}=\nabla_1I_{3a},\ I_{4b}=\nabla_1I_{3b},\ I_{4c}=\nabla_2I_{3b}; \ \dots
 \end{gather*}
This algebra separates the regular orbits of $G$.

The symbolic system from \S\ref{S22} is:
 $$
\varpi_0=\langle\partial_u\rangle,\quad
\varpi_1=\langle dx\otimes\partial_u\rangle,\quad
\varpi_k=\langle dx^{k}\otimes\partial_u,dx^{k-1}dy\otimes\partial_u\rangle,\ k\ge2.
 $$
The completed symbolic system differs only in $\tilde\varpi_2=\tau^*\ot\nu$.
In general, this differs from the symbol of the pseudogroup $\mathfrak{g}_k\subset S^kT^*M\otimes TM$,
but in our example we can identify $\varpi_k\simeq\g_k$ for $k\ge2$ ($\tilde\varpi_k\simeq\g_k$ for $k\ge1$).

Notice that for $\xi=\partial_y$ we have $\delta_{\xi}:\varpi_2\not\to\varpi_1$, and $\varpi_k$
becomes the symbolic system starting from the jet-level 2 over $\E_2''=\E_2'$.
Moreover $\varpi_{k+1}=\varpi_k^{(1)}$ for $k\ge2$; in fact, the stabilization level for both Proposition \ref{prp4}
and Theorem \ref{trma} is $l=2$ in this case. The $\delta$-complex for this symbolic system has the form
 \[
0\to\varpi_{k+2}\overset{\delta}\to\varpi_{k+1}\otimes\tau^*\overset{\delta}\to\varpi_k\otimes\Lambda^2\tau^*\to0,
 \]
where $\tau^*=\mathbb{R}^2(dx,dy)$ and so (we omit $\otimes\partial_u$ in both sides)
 \begin{gather*}
\delta(dx^k\otimes\omega)=k\,dx^{k-1}\otimes dx\wedge\omega, \\
\delta(dx^{k-1}dy\otimes\omega)=dx^{k-1}\otimes dy\wedge\omega
+(k-1)\,dx^{k-2}dy\otimes dx\wedge\omega.
  \end{gather*}
Therefore the symbolic system is involutive: $H^{k,i}(\varpi)=0$, $k\ge2$, $0\le i\le 2$.
For the completed symbolic system $\tilde\varpi$ the only non-trivial cohomology are:
 $$
H^{0,0}(\tilde\varpi)=\R,\quad H^{1,1}(\tilde\varpi)=\R.
 $$

In $\mathcal{E}_k'$, i.e.\ when $I_2\neq0$, we obviously have $\alpha_k^*=\tau^*$, so
$\alpha^\vee_k=0$ and $\alpha_k=\tau(a_k)$ for $k\ge2$ in the notations of \S\ref{S23}.

Next, for the objects of \S\ref{S24} we have $\Theta_k=\langle dI_{i\z}:i\le k\rangle$ and
 \begin{gather*}
\mathfrak{d}_0=0,\qquad \mathfrak{d}_1=\langle dy\otimes\partial_u\rangle,\\
\mathfrak{d}_k=\langle dx^{k-2}dy^2,dx^{k-3}dy^3,\dots,
dx\,dy^{k-1},dy^k\rangle\otimes\partial_u,\ k\ge2
 \end{gather*}
(these shall be considered as quotient spaces, not subspaces of $g_k=S^k\t^*\ot\nu$).
The middle complex in diagram (\ref{130301}) is exact, and the only non-trivial symbolic cohomology
of the bottom complex is
 \[
H^{1,0}(\mathfrak{d})=\R=H^{1,1}(\tilde\varpi).
 \]

%%%%%%%%%%%%%%%%%%%%%%%%%%%%%%%%%%%%%%%%%%%%%%%%%%%%%%%%%%%%%%%%%%%%%%%%%%
\section{Differential invariants}\label{S3}

In this section we prove our main results on the structure of the algebra $\A=\mathfrak{P}_l(\E)^G$
of global rational-polynomial differential invariants of an algebraic pseudogroup $G$ action.
We also prove finiteness of other invariant quantities (infinite-dimensional in the usual sense)
in the spirit of the Lie-Tresse theorem.

%%%%%%%%%%%%%%%%%%%%%%%%%%%%%%%%%%%%%%%%%%%%%%%%%%%%%%%%%%%%%%%%%%%%%%%%%%
\subsection{Stabilization of singularities and separation of orbits}\label{S31}

Let us begin by proving the affine property for the projections of orbits.
Let $l$ be the maximal among the following integers: the stabilization jet level $l$
from Theorem \ref{trma}, the involutivity levels of $\E$ and $G$, the number of Theorem \ref{refpro}
and the stabilization level of the distribution $\a^*$ from Theorem \ref{P10+} increased by 1.

 \begin{prop}\label{padepa}
With jet-level $k$ starting from this number $l$, the regular orbits
$G^{k+1}\cdot a_{k+1}\subset\E_{k+1}''$ are affine bundles over the regular orbits
$G^k\cdot a_k\subset\E_k''$.
 \end{prop}

Notice that the action of $G^{k+1}$ is affine in the fibers
$F(a_k)\cap\E^{k+1}$, but this is not enough for the
above claim (consider for instance the orbits of a linear Lie group action on the vector space).

\smallskip

 \begin{proof}
The invariant derivations from Theorem \ref{tHmT} act
$\nabla_i:\mathcal{A}_k\to\mathcal{A}_{k+1}$ for $k\ge l$
(note that if the coefficients of $\nabla$ are of order $p$, then $\nabla(f)$ has order $\ge p$).
By Theorem \ref{refpro} $H^{k,0}({\frak d})=0$ and so
$\delta:{\frak d}_{k+1}\to{\frak d}_k\ot\t^*$ is a monomorphism or equivalently
$\delta^*:{\frak d}^*_k\ot\t\to{\frak d}^*_{k+1}$ is an epimorphism.

Recall from \S\ref{S24} that this map (\ref{tresse}) is the symbol of invariant derivation operation.
From \S\ref{S23} we know that $\delta^*:{\frak d}^*_k\ot\a^\vee\to0$, whence we obtain the epimorphism
$\delta^*:{\frak d}^*_k\ot\a\to{\frak d}^*_{k+1}$.

Thus starting from the level $l$, when the amount of invariant derivations is stabilized, given a maximal
collection of differential invariants of order $\le k$ the invariant derivations applied to them
generate the symbols of all differential invariants of order $k+1$ at every point $a_{k+1}\in\E_{k+1}''$
(if there exists a differential invariant of order $(k+1)$ independent of the collection $\cup_i\nabla_i(\mathcal{A}_k)\cup\mathcal{A}_k$, then one can
obtain a new independent derivation % a la Tresse
and the collection $\nabla_i$ is not maximal).

Now given a basis $\{f^k_j\}$ in $\mathcal{A}_k$ the symbols of $f^k_j$ and of $\nabla_i(f^k_j)$
generate the symbols of the invariants from $\mathcal{A}_{k+1}$ at the regular points.
In other words, at any point $a_{k+1}\in\E_{k+1}''$ the differentials $df_j^{k+1}$
can be expressed via the differentials  $df^k_j$ and $d\nabla_i(f^k_j)$.
Since the latter are affine in the coordinates on the fibers $F(a_k)\cap\E^{k+1}$
and the orbits are the integral leaves of the kernel distribution of the differentials
of the invariants, the claim follows.
 \end{proof}

\smallskip

Now we collect the obtained knowledge to prove our first main result.

\smallskip

 \begin{proof}[Proof of Theorem \ref{Thm1}]
Let $l$ be as chosen before Proposition \ref{padepa} (this number is taken to exceed
the order of involutivity of the equation $\E$). Since the action of $G$ is transitive
on the base, all orbits in $\E^k$ project onto $M$.
So to study the space of orbits on the level of $k$-jets $\tilde{Y}^k$
it is enough to restrict to one fiber $\E^k_a$ and the action of $G^k_a$ on it:
$Y^k=(\tilde{Y}^k)_0=\E_k''\cap\pi_k^{-1}(a)/G^k_a$.

Points of this space are orbits $G^k_a\cdot a_k$ and by Proposition \ref{padepa}
the fibers of the bundle $G^{k+1}_a\cdot a_{k+1}\to G^k_a\cdot a_k$ are affine.
Since these are the orbits of the action in the affine fibers
$\E^{k+1}\cap\pi_{k+1,k}^{-1}(a_k)$, the quotient -- fibers of the
projection $Y^{k+1}\to Y^k$ are affine, i.e.\ this latter is an affine
bundle for all $k\ge l$.

In particular, there are no (higher) singularities over the set of regular points
$\E_l''=\E^l\setminus S_l$. All the singular orbits belong to the stratum $\pi_{\infty,l}^{-1}(S_l)\subset\E^\infty$, which has finite codimension.
The Zariski closed variety $S_l=\E^l\setminus\E_l''$ is the usual locus of
singularities for algebraic actions, see \cite{SR}, to which we added the set of
singularities of the (rational) invariant derivations and differential invariants.

Proposition \ref{RSL} provides the geometric quotient $\E_k''/G^k\simeq Y^k$ for all $k$.
For $k\ge l$ the singularities are stabilized to belong to $\pi_{k,l}^{-1}(S_l)$,
so the quotients form a tower of affine bundles $Y^{k+1}\to Y^k$ $\forall k\ge l$, and
the claim is proved.
 \end{proof}

\smallskip

Note that the above arguments not only prove Theorem \ref{Thm1}, but also the
separation property of Theorem \ref{Thm2} for rational differential invariants.

Indeed, the differential invariants can be chosen affine in the fibers of $Y^{k+1}\to Y^k$, $k\ge l$,
so they separate the orbits in higher jets provided the orbits of $G^l_a$-action in $\E^l_a$ are
separated by the rational invariants. But this latter is guaranteed by the Rosenlicht's theorem
(we again use separation of complex generic orbits by complex rational invariants,
and then take real and imaginary parts of the latter to separate the real orbits).

Actually, by Rosenlicht's theorem the field of rational differential invariants $\mathfrak{R}(\E''_k)^G$
coincides with the field of rational functions on the quotient variety $\mathfrak{R}(Y^k)$.
Yet for $k>l$ we can separate the $G$-orbits in $\E''_k$ by a smaller algebra of
differential invariants, namely by $\A=\mathfrak{P}_l(\E)^G$ due to the above affine property.

%%%%%%%%%%%%%%%%%%%%%%%%%%%%%%%%%%%%%%%%%%%%%%%%%%%%%%%%%%%%%%%%%%%%%%%%%%
\subsection{Global Lie-Tresse theorem}\label{S32}

Now we prove the second main result. Let $l$ be the same integer as chosen
at the beginning of Section \ref{S31}.

\smallskip

 \begin{proof}[Proof of Theorem \ref{Thm2}]
By the choice of $l$ there exists on the level $l$ a maximal collection
of independent rational invariant derivations $\nabla_1,\dots,\nabla_s:\A\to\A$,
where $s=\op{codim}\a^\vee$ is the number from Theorem \ref{tHmT}.
 % (recall that if $s<n$, then in the complimentary directions from $\a^\vee$
 % the derivations act trivially on the differential invariants).
They act by shifting the filtration $\A_k\subset\A$ by $+1$ for $k\ge l$. More generally,
for a multi-index $J$ the iterated derivations map the algebra of differential invariants
to itself $\nabla_J:\mathcal{A}_k\to\mathcal{A}_{k+|J|}$, $k\ge l$.

Since the geometric quotient $Y^l=\E''_l\cap\pi_l^{-1}(a)/G^l_a$ is finite-dimensional,
the transcendence dimension $(t-1)$ of $\mathfrak{R}(\E''_l\cap\pi_l^{-1}(a))^G\simeq\mathfrak{R}(Y^l)$
is finite. Then by Rosenlicht's theorem there exist rational differential invariants
$I_1,\dots,I_{t-1}$ of order $l$ generating a subfield in $\mathfrak{R}(\E''_l\cap\pi_l^{-1}(a))^G$
such that any other element in this field is obtained by a (finite) algebraic extension.
By the theorem on primitive element\footnote{Over characteristic 0 this theorem states that a finite
extension $E$ of a field $K$ is generated by one element $E=K(\alpha)$, see e.g.\ \cite[V.\S4: Theorem 4.6]{La}.}
there exists an element $I_t$ of this field
(that is a rational differential invariant) such that the collection $I_1,\dots,I_{t-1},I_t$ rationally generate
the whole field $\mathfrak{R}(\E''_l\cap\pi_l^{-1}(a))^G$.

We claim that for $k\ge l$ all differential invariants of order $k$ are polynomial functions of $\nabla_J(I_j)$
with rational coefficients of the invariants $I_j$, where the length $|J|\le k-l$.
We shall prove this by induction on $k$ with the base of induction $k=l$ being trivially true.
Suppose the assumption holds on the level $k$ and consider the differential invariants of order $k+1$.

By Proposition \ref{padepa} the orbits of the action of $G^{k+1}$ on $\E''_{k+1}$ are affine
bundles over the orbits of $G^k$ on $\E''_k$.
Thus $\mathcal{A}_{k+1}$ is generated by differential invariants that are affine in the
coordinates on the fibers $F(a_k)$. The set $\mathcal{A}_k\cup\{\nabla_i(\mathcal{A}_k)\}_{i=1}^s$
also consists of the invariants affine in the same coordinates, and the first set is functionally
dependent on the second by the vanishing of symbolic cohomology of the orbit spaces
$H^{k,0}({\frak d})=0$ (see the proof of Proposition \ref{padepa}).

Denote by $I^k_j$ the basis of differential invariants we obtained on the jet level $k$.
Then any differential invariant $f\in\mathcal{A}_{k+1}$, which is affine in the
$\pi_{k+1,k}$-fiber variables, can be written as
 $$
f=h_0+h_1\,I^{k+1}_1+\dots+h_{r_k}\,I^{k+1}_{r_k},
 $$
where the set $\{I^{k+1}_b=\nabla_{i_b}(I^k_{j_b}),1\le b\le r_k\}$
is a rational basis on $\E''_{k+1}$ chosen among the invariants $\nabla_i(I_j^k)$ of order $k+1$
(with some indices $i_b,j_b$) and $h_s$, $0\le s\le r_k$, are some functions on $k$-jets.

Now if $X\in\mathcal{G}$ is a vector field from the Lie algebra sheaf of $G$ and $X^{(k)}$ its $k$-jet lift,
then differentiating the above linear relation on $f$ along $X^{(k+1)}$ we get
 $$
L_{X^{(k)}}(h_0)+\sum_{b=1}^{r_k} L_{X^{(k)}}(h_b)I^{k+1}_b=0,
 $$
so linear independence of $I^{k+1}_b$ implies that $h_s$ are differential invariants.
% and by linear independence of $I^{k+1}_b$ we conclude that $h_j$ are differential invariants.
 % for $j=0,\dots,r$: $h_j\in\mathcal{A}_k$.

Similarly, any $f\in\mathcal{A}_{k+1}$, which is a polynomial of degree $q_k$
in the $\pi_{k+1,k}$-fiber variables, can be expressed as
 \begin{equation}\label{repdi}
f=\sum_{|\z|\le q_k}h_{\z}I^{k+1}_\z,
 \end{equation}
where we sum by multiindex $\z=(i_1,\dots,i_{r_k})$ and
$I^{k+1}_\z=\prod_{c=1}^{r_k}(I^{k+1}_c)^{i_c}$. Again $h_\z\in\mathcal{A}_k$
are differential invariants. Substituting the expression of
$h_\z$ via the basic invariants $I_j$ on $\E''_l$ and their invariant derivatives,
given by the induction assumption,
we express $f$ as a polynomial of the invariant derivatives of $I_j$ up to order $(k+1-l)$,
with coefficients being rational functions of $I_j$
(we remark that while the differential invariants can be chosen
affine in the highest derivatives, their behavior with respect to other jets of
order $>l$ is inevitably polynomial).

Now notice that the commutators of the invariant derivations are invariant derivations, so we
can decompose
 \begin{equation}\label{MCe}
[\nabla_i,\nabla_j]=\sum_{k=1}^s \varrho_{ij}^k\nabla_t,
 \end{equation}
where $\varrho_{ij}^k$ are differential invariants of order $\le l+1$. Therefore
any composition $\nabla_{i_1}\cdots\nabla_{i_q}(I_j)$ can be algebraically expressed via
$\nabla_J(I_j)$ with an ordered multi-index $J$. This yields algebraic expression of
differential invariants in $\mathcal{A}_{k+1}$ via $\nabla_J(I_j)$,
which is polynomial in the entries with $|J|>0$ and rational in $I_j$, and so
fulfils the induction step.

This finishes the proof of finiteness property in the Lie-Tresse sense. The separation
property is also proved by induction. Indeed, the proof of separation
by rational invariants from the end of \S\ref{S31}, works also for differential invariants
rational in order $\le l$ and polynomial in order $>l$.
This proves Theorem \ref{Thm2}.
 \end{proof}

\bigskip

% An approach via investigation of the growth of dimensions of stabilizers and
% vanishing of the corresponding cohomology is contained in \cite{KL$_1$}
% (however the result there is proved under additional technical assumptions).

 \begin{remark}
In the proof we have chosen a collection $I_1,\dots,I_t$, which taking into account
the invariant derivations $\nabla_1,\dots,\nabla_s$ is often excessive. There exists a minimal
collection of differential invariants and invariant derivations generating the algebra $\mathcal{A}$,
but the structure of this set is yet unknown. See some partial results by P.\,Olver and co-authors
in \cite{Ol$_3$}.

We have an alternative: either to represent
$\mathfrak{P}_l(\E)^G$ with the minimal number of basic invariants and derivations, or
to reduce $S$ in size (by decreasing the locus of dependencies for generators of the algebra
of differential invariants).
 \end{remark}

As noticed in the Introduction, the global Lie-Tresse theorem implies the familiar
micro-local Lie-Tresse theorem. Indeed for an open set $U_\infty=\pi_{\infty,l}^{-1}(U_l)$
($U_l\subset\E''_l$ needs not be $G$-invariant, this is a feature of the micro-local analysis)
restriction of the collection $\mathcal{A}_\infty$ of rational-polynomial invariants
form a basis of differential invariants. On every finite level $k\ge l$ a differential invariant
of order $k$ is functionally dependent on a basis in $\mathcal{A}_k$ and so can be expressed through it
by the implicit function theorem.

We can even make this expression without shrinking the neighborhood $U_k$
(standard in the application of implicit function theorem).
Indeed, the function constant on the orbits can be expressed by the coordinates on the
geometric quotient $Y_k$ (or its part corresponding to $U_k$) by the categorical property
discussed in \S\ref{S31} (the geometric quotient is a categorical quotient).

%%%%%%%%%%%%%%%%%%%%%%%%%%%%%%%%%%%%%%%%%%%%%%%%%%%%%%%%%%%%%%%%%%%%%%%%%%
\subsection{Invariant derivations and other geometric structures}\label{S33}

The Lie algebra of rational invariant derivations $\mathfrak{Der}(\E,\t)$ is finitely
generated as an $\A$-module, namely we have by Theorem \ref{tHmT}:
 $$
\dim_{\A}\mathfrak{Der}(\E,\t)=s.
 $$

We shall discuss a relation of this to the Lie algebra $\mathfrak{D}(\E,\t)$ of invariant derivatives.
We first introduce the latter as a module of sections of a distribution over $\A$ in the spirit of Section \ref{S23}.

Let $\a^k_H(a_k)\subset\t_k(a_k)$ be the subspace of vectors invariant with respect to the stabilizer
$G_{a_k}$ of the point $a_k\in\E^k$. Since $\t_k\simeq\t$ via the natural projection $\pi_k$,
we have the following: $a_{k+1}\in F(a_k)$ implies $\a_H^{k+1}(a_{k+1})\supset\a_H^k(a_k)$.
Thus we have the canonically defined limit
 $$
\a_H(a_\infty)=\lim_{k\to\infty}\a_H^k(a_k)=\cup\,\a_H^k(a_k).
 $$
Moreover the stabilization occurs on a finite jet-level.

 \begin{prop}\label{P10}
There exists a number $l$ and a Zariski open non-empty subset $\E_l''\subset\E^l$ such
that $\a_H^l\subset\t_l|_{\E_l''}$ is a regular smooth sub-distribution and that rank of $\a_H^k(a_k)$
is independent of $k\ge l$ and the point $a_k\in\pi_{k,l}^{-1}(\E_l'')$ .
 \end{prop}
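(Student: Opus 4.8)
The plan is to identify the subspaces $\a_k\subset\t_k$ with the kernel of the natural map $\t\to\mathfrak{d}_k^*\ot\t$ dual to the symbol of Tresse derivations $\delta^*$ from (\ref{tresse}), and then to invoke the cohomological stabilization already established in Theorem \ref{trma}. More precisely, for $v\in\t$, the condition that the lift $\hat v$ be $G_{a_k}$-invariant is equivalent to a linear-algebraic condition on $v$ involving the tangent distribution $\Delta_k(a_k)$ to the orbit: namely $v$ lifts to an invariant horizontal vector exactly when the pairing of $v$ with the "new" part of $\Delta$ (i.e. with $\varpi_k$ under the Spencer contraction $\delta_v$) vanishes. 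So $\a_k$ should be expressible in terms of the symbolic system $\varpi(a_k)$ at the point, and the map $\a_k\hookrightarrow\a_{k+1}$ corresponds precisely to the inclusion $\varpi(a_\infty)\subset\varpi(a_k)$ of Proposition \ref{prp4}.

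First I would make the identification of $\a_k$ with a subspace determined by the symbol $\varpi_k$ rigorous: a horizontal vector at $a_k$ is the lift of $v\in\t$, and since $G_{a_k}$ acts on $\t_k\simeq\t$ through its action on $\varpi$-data, invariance of $\hat v$ amounts to $v$ annihilating the relevant contractions $\delta_v(\varpi_{k+1})\subset\varpi_k$ — equivalently $v\in\bigcap_j\ker(\delta_v\colon\varpi_{j+1}\to\varpi_j)$ interpreted suitably. Second, I would note that these are finitely many polynomial (in fact linear, once $a_k$ is fixed, and algebraic in $a_k$) conditions, so the locus where $\dim\a_k$ attains its generic (minimal) value is Zariski open in $\E_k$; call it $\E_k'$. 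Third, I would apply Theorem \ref{trma}: there is a level $l$ and a $G$-invariant Zariski open $\E_l''\subset\E_l$ beyond which $\varpi(a_\infty)=\varpi(a_l)$, i.e. the symbolic system is obtained purely by prolongation and all first Spencer cohomology vanishes. On $\pi_{k,l}^{-1}(\E_l''\cap\E_l')$, since the symbolic system no longer changes, the conditions cutting out $\a_k$ stabilize, so $\a_k=\a_l$ (pulled back) and in particular $\op{rank}\a_k$ is constant in $k\ge l$. Taking $\E_l'$ to be this intersection (shrinking $\E_l''$ if necessary so it is still $G$-invariant and nonempty, using transitivity on $M$ to ensure it projects onto $M$) gives the statement; regularity of the distribution $\a_l$ follows because on $\E_l'$ the rank is locally constant and the defining equations are smooth.

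The main obstacle I anticipate is the first step: carefully relating $G_{a_k}$-invariance of the lifted horizontal vector to a condition phrased entirely in terms of the symbolic system $\varpi$. One must check that the stabilizer $G_{a_k}$ acts on $\t_k$ through a quotient that only sees the symbol data (the higher-order jet information acts trivially on $\t_k\simeq\t$, by the exact sequence $0\to S^k\t^*\ot\nu\to T_{a_k}J^k\to T_{a_{k-1}}J^{k-1}\to0$ and the fact that the vertical part acts affinely), and that the resulting linear conditions on $v$ are exactly dual to the contractions appearing in Proposition \ref{prp4}. Once this bookkeeping is done, the stabilization and the Zariski-openness are immediate from the results already proved. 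A secondary point needing care is ensuring $\E_l'$ can be chosen $G$-invariant: the rank condition is automatically $G$-invariant since $G$ acts on the whole tower compatibly, so intersecting with the $G$-invariant set $\E_l''$ and with the orbit-regular set from Theorem \ref{trma} keeps everything $G$-invariant and, by transitivity, surjective onto $M$.
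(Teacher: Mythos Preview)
Your proposal has a genuine gap in its first and central step. You claim that $v\in\a_k$ (i.e., $v$ is fixed by the stabilizer $G_{a_k}$ acting on $\t$) can be rephrased as a condition on the Spencer contractions $\delta_v$ of the symbol $\varpi_k$. This is not correct. The stabilizer $G_{a_k}$ acts on $\t\simeq T_aN$ through its \emph{1-jet}, i.e., through the image of $G_{a_k}\to G^1_a\to\op{GL}(\t)$; what determines $\a_k$ is this image subgroup of $\op{GL}(\t)$, not the vertical symbol $\varpi_k\subset S^k\t^*\ot\nu$ of the orbit. The two are related only indirectly (a larger orbit means a smaller stabilizer, hence possibly a larger fixed set), and there is no formula of the type ``$v\in\a_k\Leftrightarrow\delta_v(\varpi_{k+1})=0$''. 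Indeed, for generic $v$ the map $\delta_v:\varpi_{k+1}\to\varpi_k$ is nonzero whenever $\varpi_{k+1}\ne0$, regardless of whether $v$ is stabilizer-fixed. You yourself flag this step as the main obstacle, and rightly so: it does not go through as written.

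The argument the paper has in mind (``the same as Theorem \ref{trma}'') is much more direct and does not pass through $\varpi$ at all. The key observation is that for each fixed $a_\infty$ the sequence $\a_0\subset\a_1\subset\cdots$ sits inside the \emph{finite-dimensional} space $\t$, so it can strictly increase at most $n=\dim\t$ times. One then runs the essential-supremum argument of Theorem \ref{trma}: let $r_k$ be the generic value of $\dim\a_k$ on $\E_k$; the sequence $(r_k)$ is nondecreasing and bounded by $n$, hence stabilizes at some level $l$. Algebraicity of the action makes the locus $\E_l'=\{a_l:\dim\a_l(a_l)=r_l\}$ Zariski open, and over it the inclusions $\a_l\subset\a_k$ combined with $r_k=r_l$ force equality. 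The role played by Hilbert's basis theorem in Theorem \ref{trma} is here replaced by the elementary bound $\dim\t=n$; otherwise the architecture is identical, which is why the paper omits the details.
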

The proof uses the same arguments as in Theorem \ref{trma} and so is omitted.
Denote the stable rank of the distribution $\a_H^l|_{\E_l''}$ by
$s_H=\sup\dim\a_H(\a_\infty)$.

 \begin{cor}\label{poi}\label{Cr11}
There exists $s_H=\dim\a_H$ invariant derivatives of the $G$-action
generating the module $\mathfrak{D}(\E,\t)$ of invariant derivatives over
the algebra $\mathcal{A}$.
 \end{cor}

 \begin{proof}
Elements of the $\A$-module $\mathfrak{D}(\E,\t)$ are the rational sections
of the distribution $\a$ over $\E_\infty''$. Existence of the basis of
$s_H=\dim_\A\mathfrak{D}(\E,\t)$ rational invariant derivatives follows similarly to
Theorem \ref{tHmT}.

Alternatively, consider the $G^k$-action on the bundle $\t_k$ over $\E^k$, which we denote by
$\E^k\times\t$. It is equivalent to the $G^k_a$-action on $\E^k_a\times\t_a$ for any point $a\in M$.
This latter is algebraic and so by Rosenlicht's theorem there is a geometric quotient
 $$
(\E^k\times\t_k\setminus\hat S_k)/G^k=(\E^k_a\times\t_a\setminus\hat S_k\cap(\E^k_a\times\t_a))/G^k_a
\simeq {\hat Y}^k.
 $$
As before the singularity locus $\hat{S}_k$ stabilizes meaning that $\pi_{k,l}:\hat S_k\to\hat S_l$ for $k>l$
(in fact, we occasionally obtain that $\hat S$ fibers over the singularity locus $S$
for the differential invariants).

There is the natural algebraic morphism ${\hat Y}^k\to Y^k$. Dimension of its fibers corresponds
to the number of invariant derivatives on the level $k$, so this is an integer
between 0 and $n$ that (non-strictly) monotonically grows with $k$. Clearly it stabilizes at the number $s_H$
and the claim follows.
 \end{proof}

\bigskip

We would like to have a sufficient condition for $s_H=n$.
For this we introduce the following notion.

 \begin{dfn}
An equation $\E$ is called {\em ample} if for almost all $a_k\in\E^k$
($k$ starts with the order of involutivity) no irreducible component of
the characteristic variety $\op{Char}^\CC(\E,a_k)\subset\mathbb{P}^\CC\t_a^*$ belongs to one hyperplane.
 \end{dfn}

Notice that in this definition we can restrict to $k$ being the (maximal) order of $\E$,
since prolongation does not change the characteristic variety.

In the case $\E^k=J^k$ (no constraints on $N$) the condition of ampleness clearly holds.
Moreover, ampleness is the condition of general position for the projective variety
$\op{Char}^\CC(\E,a_k)$ of fixed dimension provided it is nonempty;
the opposite case reduces to algebraic actions of Lie groups on
finite-dimensional manifolds.

 \begin{prop}\label{P12}
If $\E$ is ample, then the map $\d_v=i_v\circ\d:g_{k+1}\to g_k$ is onto
for every non-zero vector $v\in\t$ ($k$ starts with the order of involutivity).
 \end{prop}

 \begin{proof}
By \cite{S$_2$} $\op{Char}^\CC(\E)$ is the support of the symbolic module $\mathcal{M}_\E=g^*$ over $S\t$
and every element $q_k\in g_k^\CC$ is a finite linear combination $q_k=\sum\ll_ip_i^k\ot\zeta_i$ with $\ll_i\in\CC$, $p_i\in\op{Char}^\CC(\E)$, $\zeta_i$ from the kernel sheaf.
Given $v\in\t\setminus0$ a generic $q_k$ has characteristic covectors $p_i\notin v_\CC^\perp$
in this sum (by ampleness).
Then for $q_{k+1}=\sum\frac{\ll_i}{k+1}\langle p_i,v\rangle^{-1}p_i^{k+1}\ot\zeta_i\in g_{k+1}^\CC$
we have $\delta_v(q_{k+1})=q_k$, whence $\d_v(g_{k+1}^\CC)=g_k^\CC$. The claim
for the real case follows.
 % Now if no irreducible components $\Sigma_\kappa$ of the
 % characteristic variety belongs to the hypersurface $v^\perp$, then projection along it
 % is an epimorphism on $\Sigma_\kappa$, whence the claim.
 \end{proof}

 \begin{theorem}\label{Thmn}
Assume that $\E$ is ample and that the number of functionally independent $G$-differential
invariants is infinite. Then there exist $n$ independent invariant derivatives
$\nabla_1,\dots,\nabla_n\in\mathfrak{D}(\E,\t)$.
 \end{theorem}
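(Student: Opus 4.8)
The plan is to reduce the statement, via Corollary~\ref{poi}, to showing that the invariant sub-distribution $\a\subseteq\t$ is everything. By that corollary the module $\mathfrak{D}(\E,\t)$ is generated over the algebra of invariants by $s=\dim\a$ invariant derivations; since $\a\subseteq\t$ we have $s\le n$, and the assertion is exactly that $s=n$. The idea is to trap $s$ between two bounds coming from the characteristic variety: ampleness will force $\op{Char}^\CC(\E)$ to be all of $\mathbb{P}^\CC\t^*$, whereas having only $s<n$ genuine invariant derivations confines the leading symbols of the differential invariants to a subvariety of dimension $s-1$, which cannot then be $(n-1)$-dimensional.

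For the upper bound I would work with the graded module $\mathfrak{d}^*=\bigoplus_k\mathfrak{d}_k^*$ of leading symbols of differential invariants. Dualizing the exact sequence~\eqref{130300} gives $0\to\mathfrak{d}_k^*\to g_k^*\to\varpi_k^*\to0$, so $\mathfrak{d}^*$ is a graded submodule of the symbolic module $g^*$ of $\E$; and since $\dim\mathcal{A}=\infty$, $\mathfrak{d}^*$ is nonzero in infinitely many degrees, hence infinite-dimensional. On the other hand ampleness means that $\op{Char}^\CC(\E)$ has no proper irreducible component, so (it being nonempty, as $\dim\mathcal{A}=\infty$) it equals $\mathbb{P}^\CC\t^*$; combined with the Cohen--Macaulay property of a sufficiently prolonged $g^*$ from \S\ref{S15} this makes $g^*_{[k_0]}$ torsion-free over the polynomial ring $S\t$, hence so is its nonzero submodule $\mathfrak{d}^*_{[k_0]}$. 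A nonzero torsion-free module over the domain $S\t$ has support all of $\op{Spec}S\t$, so $\op{Char}(\mathfrak{d}^*)=\mathbb{P}^\CC\t^*$, of dimension $n-1$.

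For the lower bound I would use the invariant derivations. Their symbol is the map~\eqref{tresse}, that is, the $S\t$-multiplication on $\mathfrak{d}^*$, but by Corollary~\ref{poi} only the $s$-dimensional subspace $\a\subseteq\t$ is realized by elements of $\mathfrak{D}(\E,\t)$. By the micro-local Lie--Tresse theorem established in \S\ref{S22}, past some order $k_0$ every differential invariant is a rational-polynomial expression in finitely many invariants of order $\le k_0$ and their iterated $\nabla_j$-derivatives; passing to leading symbols, $\mathfrak{d}^*$ is generated over the polynomial subring $S\a\subseteq S\t$ by the finite-dimensional space $\mathfrak{d}^*_{\le k_0}$, hence is a finitely generated $S\a$-module. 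A Cayley--Hamilton/integral-dependence argument then shows that every $v\in\t$ satisfies on $\mathfrak{d}^*$ a polynomial monic in $v$ with coefficients in $S\a$, so the linear projection of $\mathbb{P}^\CC\t^*$ onto $\mathbb{P}^\CC\a^*$ with centre $\mathbb{P}^\CC\a^\perp$ restricts to a finite morphism on $\op{Char}(\mathfrak{d}^*)$; in particular $\dim\op{Char}(\mathfrak{d}^*)\le s-1$. Comparing with the previous paragraph, $n-1\le s-1$, whence $s=n$, and Corollary~\ref{poi} supplies the $n$ independent invariant derivatives $\nabla_1,\dots,\nabla_n$.

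The step I expect to be the main obstacle is the passage from the micro-local Lie--Tresse representation to the statement that $\mathfrak{d}^*$ is finitely generated precisely over $S\a$ --- the symbols of the $s$ \emph{genuine} invariant derivations --- rather than over a larger subring coming from Tresse-type derivatives, and doing so uniformly over a Zariski-open $G$-invariant subset of $\E_l$ so that the resulting $\nabla_1,\dots,\nabla_n$ are honest global rational invariant derivations. This needs careful bookkeeping of the jet-order filtration and of how the rational coefficients occurring in the Lie--Tresse formula interact with leading symbols; once that is in place, the algebro-geometric comparison of dimensions above, together with the Cohen--Macaulay input of \S\ref{S15} and the precise reading of ampleness, is formal.
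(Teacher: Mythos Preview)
Your argument rests on a misreading of the definition of \emph{ample}. The paper defines $\E$ to be ample when no irreducible component of $\op{Char}^\CC(\E,a_k)$ is contained in a hyperplane of $\mathbb{P}^\CC\t^*$; it does \emph{not} say the characteristic variety is all of $\mathbb{P}^\CC\t^*$. A smooth conic in $\mathbb{P}^2$ lies in no line, so the corresponding scalar second-order PDE in three independent variables is ample, yet $\dim\op{Char}^\CC(\E)=1<2=n-1$. Consequently your chain ``ampleness $\Rightarrow$ $\op{Char}^\CC(\E)=\mathbb{P}^\CC\t^*$ $\Rightarrow$ $g^*_{[k_0]}$ torsion-free $\Rightarrow$ $\op{Char}(\mathfrak{d}^*)=\mathbb{P}^\CC\t^*$'' breaks at the first step, and with it the inequality $n-1\le s-1$. (The Cohen--Macaulay property of $g^*_{[k_0]}$ from \S\ref{S15} gives torsion-freeness only when the support already equals $\op{Spec}S\t$, which is exactly what fails here.) Since $\mathfrak{d}^*$ is a submodule of $g^*$, one always has $\op{Char}(\mathfrak{d}^*)\subset\op{Char}^\CC(\E)$, so no purely dimensional comparison of characteristic varieties can force $s=n$.

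The paper's proof proceeds along an entirely different line: it works with the horizontal codistribution $\a^*=\{\hat df:f\in\mathcal{A}_k\}$, assumes its rank is $s<n$, and exhibits a concrete perturbation $a_{k+1}\mapsto a_{k+1}+\epsilon\,\theta$ by a rank-one symbol $\theta=p^{k+1}\otimes\zeta$ with $p$ characteristic. Ampleness enters only to guarantee that $p$ can be chosen with $p|_{\op{Ker}\oo_s}\ne0$ (equivalently, as noted in the footnote, that $\delta_v:g_{k+1}\to g_k$ is surjective for every nonzero $v\in\t$), which makes $\hat df_1\wedge\dots\wedge\hat df_{s+1}$ nonzero at the perturbed point --- a contradiction. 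Once $s=n$ one takes Tresse derivatives $\hat\p/\hat\p f_i$. If you want to salvage a module-theoretic approach, the correct translation of ampleness is this surjectivity of $\delta_v$, not full support; you would then need to argue that $\op{Char}(\mathfrak{d}^*)$ inherits a component not contained in any hyperplane and that finiteness over $S\a$ forces such a component to miss the linear centre $\mathbb{P}^\CC\a^\perp$, which is a different (and more delicate) statement than the dimension bound you wrote.
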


In the case\footnote{Here by $\dim\mathcal{A}$ we understand the maximal number of
functionally independent differential invariants (recall that any function of differential invariants
is a differential invariant itself, so the dimension of $\mathcal{A}$ as a vector space is
either 1 or $\infty$).}
$\dim\mathcal{A}<\infty$, the $G$-action is eventually transitive
(orbits have finite codimension in $\E^\infty$); some such systems were investigated
in \cite{KL$_2$} (in fact it is enough to assume $\dim\mathcal{A}>n$ in the theorem).

\smallskip

 \begin{proof}
We claim that $s=\dim\a=n$ under the assumptions of the theorem.

Assume the opposite $s<n$.
Then for some independent differential invariants $f_i$, $1\le i\le s+1$, we have:
$\oo_s=\hat d f_1\we\dots\we\hat d f_{s-1}\we\hat d f_s\ne0$ at a generic point $a_{k+1}\in\E''_{k+1}$,
and $\oo_s\we\hat d f_{s+1}=0$. Moreover by the assumptions for all $i\le n$:
$\hat d f_1\we\dots\we\check{d f_i}\we\dots\we\hat d f_{s+1}=\z_i\oo_s$, where $\z_i$ are
some differential invariants.

Let $\z_i^0=\z_i(a_{k+1})$ be the (constant) values. Let $k\ge l$.
Changing $f_{s+1}$ to another differential invariant
$f_{s+1}-\sum_{i=1}^s(-1)^{s-i}\z_i^0f_i$ we get the following equalities at $a_{k+1}$:
$\hat d f_1\we\dots\we\check{d f_i}\we\dots\we\hat d f_{s+1}=0$ for $i\le n$.

We can suppose (by increasing $k$ and using $\dim\mathcal{A}=\infty$) that
$df_{s+1}|g_k\ne0$. Then by Proposition \ref{P12} there exists $\theta\in g_{k+1}$ such that
$\delta_v\theta\not\in\op{Ker}(df_{s+1}|_{g_k})$ for generic $v\in\t$.
Moreover, since our claim concerns the ranks we can work over $\CC$
and so choose $\theta=p^{k+1}\ot\zeta$, where the covector $p\in\t_\CC^*$
is characteristic and $\zeta$ is from the kernel sheaf (compare the proof of Proposition \ref{P12}).

By a small perturbation we can achieve $p|_{\op{Ker}\oo_s}\ne0$ (because $\E$ is ample)
and we also have $\rho=df_{s+1}(\bar\theta)\ne0$ for $\bar\theta=p^k\ot\zeta\in g_k$ by the above
assumption.
Finally we can change the differential invariants $\{f_i\}_{i=1}^s$ in such a way that the
above constraints hold, and in addition $df_i(\bar\theta)=0$, $1\le i\le s$, at the point $a_{k+1}$.

Let us now deform the point $a_{k+1}\in\E^{k+1}$ to $a_{k+1}^\epsilon=a_{k+1}+\epsilon\,\theta$
($\theta\in S^{k+1}\t^*\ot\nu$ is as above and we use the affine structure in the fibers).
Since the set $\E_{k+1}''$ is Zariski open, the point $a_{k+1}^\epsilon$ still belongs to this set
for small $\epsilon$.
Then the horizontal $(s+1)$-form $\oo_s\we \hat d f_{s+1}$ at the point $a_{k+1}^\epsilon$ equals
 $$
\hat d f_1\we\dots\we\hat d f_s\we\hat d f_{s+1}=\epsilon\rho(k+1)\,\oo_s\we p\ne0
 $$
for $\epsilon\ne0$. This contradicts the choice of $s$. Thus $s=n$.

Consequently there exist $n$ differential invariants
$f_1,\dots,f_n$ satisfying the condition $\hat d f_1\we\dots\we\hat d f_n\ne0$
in an open dense neighborhood $U\subset\E^\infty$,
which implies existence of $n$ invariant derivatives
$\nabla_i=\hat\p/\hat\p f_i$, $i=1,\dots,n$, by the construction \`a la Tresse, see \S\ref{S14}.
 \end{proof}

\smallskip

 \begin{cor}
Under the above assumptions we have $\mathfrak{Der}(\E,\t)=\mathfrak{D}(\E,\t)$. \qed
 \end{cor}

In general we can have $s_H=\dim\a_H<n$ (see example in \S\ref{S42}).
To see a relation between this number and $s$, let us recall a subbundle $\a_k^\vee\subset\t_k$
from Section \ref{S23}.
By the proof of Theorem \ref{Thmn} elements of $\a^\vee_k$ annihilate the characteristic variety $\op{Char}^\CC(\E,a_k)$ at every point $a_k$. These vectors act trivially on the symbolic module
of the equation $\E$:
 $$
\a^\vee\subset\{v\in\t:v^\perp\supset\op{Char}^\CC(\E)\}
 $$
(in other words, $v$ to the right are all non-regular elements for the module $g^*$).
In addition, (rational) sections of $\a^\vee$ act trivially on the algebra $\A$.

Every invariant derivative is also an invariant derivation. This morphism $\iota$
leads to the exact sequence
 $$
0\to\a_H^\vee\longrightarrow\a_H\stackrel{\iota}\longrightarrow\a,
 $$
where $\a_H^\vee=\a_H\cap\a^\vee$ is the kernel of $\iota$. Thus if we denote
$s_H^\vee=\dim\a_H^\vee$ the maximal number of independent invariant derivatives acting trivially
on differential invariants, we conclude:
 $$
s_H-s_H^\vee\le s.
 $$

Similar as invariant derivatives (and derivations) are finitely generated, we can consider
other invariant tensors. For instance, invariant horizontal 1-forms form a module over the algebra
$\A$ (containing $\Gamma_\infty(\a^*)\simeq\mathfrak{Der}(\E,\t)$ as a submodule).
They correspond to (rational-polynomial) sections of the limiting distribution of
the $G^k$-invariant forms $\t_k^*$ over $\E^k$.
This module is finitely generated in the Lie-Tresse sense, and
there is a natural pairing between the elements of this module and the module
$\mathfrak{D}(\E,\t)$ with values in $\A$.

Likewise we can prove finiteness of the module of invariant $q$-forms for any $q$
(an important example: closed invariant $(n-1)$-forms, which are invariant conservation laws).
More generally any tensorial module of fixed type, for instance $\bigl((\ot^p\t)\ot(\ot^q\t^*)\bigr){}^G$
with fixed $p,q$, is finitely generated.

What about the whole tensor algebra (all possible valencies)? Let us restrict, for instance, to
$\sum_{p,q}((\ot^p\t)\ot(\ot^q\t^*))^G$ (a combination of symmetric and skew-symmetric powers
is also possible), calling its sections over $\E''_\infty$ the $\A$-module of horizontal tensors
$\mathfrak{T}(\E,\t)$.

Is this module finitely generated in the Lie-Tresse sense?
We do not know the answer in general, but understand the generic situation.

 \begin{theorem}
Let $\E$ be ample (for instance the whole space of jets), and the number of scalar differential
invariants of the pseudogroup $G$ action be infinite. Then the module $\mathfrak{T}(\E,\t)$
of $G$-invariant horizontal tensors on $\E^\infty$
is finitely generated over the algebra $\A$ in the Lie-Tresse sense globally.
 \end{theorem}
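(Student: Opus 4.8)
The idea is to reduce the claim to the scalar case via Theorem \ref{Thmn} together with the Rosenlicht-style argument used for invariant derivations. First, since $\E$ is ample and $\dim\mathcal{A}=\infty$, Theorem \ref{Thmn} gives $n$ independent invariant derivatives $\nabla_1,\dots,\nabla_n\in\mathfrak{D}(\E,\t)$; fix a dual basis and use the standard lift $v\mapsto\hat v$ to identify $\t_\infty$ with the horizontal plane field. The full tensor bundle of type $(p,q)$ is then a direct sum over $p,q$ of $G^k$-equivariant algebraic vector bundles over $\E_k$, and the invariant derivations let us express the symbol of prolongation: the analogue of map (\ref{tresse}) for tensors of fixed valency is again epimorphic at regular points because the Spencer-type cohomology $H^{k,*}(\mathfrak{d})$ vanishes for $k\ge l$ (Proposition \ref{refpro}) and tensoring with a fixed finite-dimensional fiber preserves this vanishing.

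Next, I would apply Rosenlicht's theorem \cite{R,SR} to the $G^k$-action on the bundle $\E_k\times(\ot^p\t\ot\ot^q\t^*)$ over a fixed fiber $\E^k_a$: for each fixed $(p,q)$ and each $k\le l$ the categorical quotient is geometric outside a Zariski-closed invariant set, and the field of rational invariant tensors of that valency is finitely generated over the quotient. The point is to make the bound on the jet-level $l$ uniform in $(p,q)$: this is where ampleness and infiniteness of $\mathcal{A}$ enter, since they force the same characteristic-variety data (Theorem \ref{trma}, \S\ref{S23}) to govern the prolongation of every tensor bundle, so the cohomological stabilization level does not grow with $p+q$. Granting this, on $\E_\infty\setminus\pi_{\infty,l}^{-1}(S)$ every invariant tensor field of order $k+1$ is, by the affine structure of the fibers of $Y_{k+1}\to Y_k$ and surjectivity of the symbol map, a $\mathcal{A}_k\cap\mathfrak{P}_l(\E)^G$-linear combination of $\nabla_i$ applied to invariant tensor fields of order $k$, tensored with the finitely many invariant tensors appearing at level $l$ --- exactly the inductive step of Theorem \ref{Thm2} with the scalar invariants replaced by tensor-valued generators.

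Finally, one assembles the generators: take the $I_1,\dots,I_t$ and $\nabla_1,\dots,\nabla_s$ of Theorem \ref{Thm2}, adjoin the finitely many invariant tensors of each valency $(p,q)$ with $p+q$ bounded by the stabilization level (only finitely many such bundles contribute genuinely new generators, since higher-valency invariants are built from lower ones by $\ot$ and contraction, both $G$-equivariant operations commuting with $\nabla_J$), and close under $\nabla_J$ and the tensor algebra operations. The resulting finite set generates $\sum_{p,q}(\ot^p\t\ot\ot^q\t^*)^G$ as a module in the Lie-Tresse sense.

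The main obstacle is precisely the uniformity of $l$ in the valency $(p,q)$: a priori the involutivity level of the prolongation equation for tensor-valued sections could grow with $p+q$, which would prevent a single finite generating set. Resolving this requires showing that the characteristic variety of the relevant symbolic system is the same (that of $\E$, or of $\mathcal{R}$) for all valencies --- which holds because tensoring the symbolic module $g^*$ with a fixed finite-dimensional space does not change its support or Chevalley dimension --- so that Theorem \ref{trma} and the Hilbert-basis argument of \S\ref{S21} apply with a bound depending only on $n$ and the orders of $\E$ and $G$, not on $p,q$. Everything else is the diagram chase of \S\ref{S24} and \S\ref{S32} carried out with coefficients in a fixed vector bundle.
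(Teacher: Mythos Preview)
Your proposal takes a much harder route than necessary and, in doing so, creates an obstacle that does not actually exist. You correctly invoke Theorem \ref{Thmn} to obtain $n$ independent invariant derivatives $\nabla_1,\dots,\nabla_n$ and even mention fixing a dual basis --- but then you abandon this and try to rerun the entire Lie-Tresse machinery (Rosenlicht on tensor bundles, cohomological stabilization, uniformity of $l$ in the valency $(p,q)$) with tensor-valued coefficients. The ``main obstacle'' you identify, namely that the involutivity level might grow with $p+q$, is an artifact of this detour.

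The paper's proof is a one-step reduction. Since $\nabla_1,\dots,\nabla_n$ are $n$ independent invariant sections of $\t$, they form an invariant frame $e_i=\nabla_i$; dualizing gives an invariant coframe $e^1,\dots,e^n$ of $\t^*$. Any horizontal $(p,q)$-tensor $T$ on $\E_\infty$ decomposes as
\[
T=\sum T^{j_1\dots j_p}_{i_1\dots i_q}\,e_{j_1}\ot\cdots\ot e_{j_p}\ot e^{i_1}\ot\cdots\ot e^{i_q},
\]
and since the frame and coframe are $G$-invariant, $T$ is $G$-invariant if and only if all coefficients $T^{j_1\dots j_p}_{i_1\dots i_q}$ are scalar differential invariants. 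Finite generation of the scalar invariants (Theorem \ref{Thm2}) then finishes the proof immediately: the generators are $I_1,\dots,I_t$, $\nabla_1,\dots,\nabla_n$, and the $2n$ tensors $e_i,e^j$. No separate stabilization argument for each valency is needed, and there is nothing to make uniform.

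Your claim that ``higher-valency invariants are built from lower ones by $\ot$ and contraction'' is precisely what the frame trick delivers for free; without it, that claim is unsupported.
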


 \begin{proof}
Indeed, by Theorem \ref{Thmn} there are $n$ invariant derivatives $e_i=\nabla_i$,
so by dualization there are $n$ invariant 1-forms $e^i=\oo^i$.
We can decompose any $(p,q)$-tensor $T\in\mathfrak{T}(\E,\t)$ through these
 $$
T=T_{i_1\dots i_q}^{j_1\dots j_p}e_{j_1}\ot\dots\ot e_{j_p}\ot e^{i_1}\ot\dots\ot e^{i_q}.
 $$
The coefficients are scalar differential invariants iff $T$ is an invariant horizontal tensor.
Thus the claim follows from the global Lie-Tresse theorem.
 \end{proof}

In a similar manner we can prove finiteness of other natural geometric objects:
lifted connections, higher order differential operators etc.

%%%%%%%%%%%%%%%%%%%%%%%%%%%%%%%%%%%%%%%%%%%%%%%%%%%%%%%%%%%%%%%%%%%%%%%%%%
\subsection{Differential syzygies and the quotient equation}\label{S34}

An algebraic relations among the generators of the algebra $\A=\mathfrak{P}_l(\E)^G$, i.e.\ among
the basic invariants $I_i$ and derivations $\nabla_j$, is called a {\it differential syzygy\/}.
Syzygies have the form $Q(\{\nabla_{J}(I_i)\})=0$, where $J$ are multi-indices and the polynomial $Q$
depends on a finite number of arguments.

Obviously the space of such $Q$ carries the structure of a module over the algebra
of invariant differential operators $\mathfrak{Diff}(\E,\t)$ generated by\footnote{The are two
obvious choices for the invariant differentiations: derivatives $\mathfrak{D}(\E,\t)$ and derivations
$\mathfrak{Der}(\E,\t)$. We choose the latter, because the primary goal is to generate $\A$.}
the algebra $\A$ and invariant derivations $\nabla_j$.
The Maurer-Cartan relations (\ref{MCe}), applied to any generator of $\A$, are among the syzygies,
so modulo those relations we can restrict to relations with ordered multi-indices $J$.
This reduces the amount of arguments for $Q$, and also allows to write
 $$
\mathfrak{Diff}(\E,\t)=\{\sum f_J\nabla_J\},
 $$
where the summation is finite, $f_J\in\A$ and the multi-indices are ordered (a choice of the order does not
play a role here). Notice that the action of a derivation $\nabla_j$ on a differential syzygy $Q$
produces another differential syzygy with possibly a bigger number of arguments. Thus we get the
D-module\footnote{Here D means a module over the algebra $\mathfrak{Diff}(\E,\t)$.
Notice that the algebra $\mathfrak{Diff}(\E,\t)$ is generated by $\A$ and $\nabla_j$,
and so is finitely generated by $I_i$ and $\nabla_j$.}
of syzygies $\mathfrak{syz}_\infty^G$.
(obviously the ring structure of $\mathfrak{syz}_\infty^G$ is surpassed by the D-module structure,
as e.g. $Q^2$ is obtained from the syzygy $Q$ by multiplication by $Q\in\A$)
 % which is also a ring (without unit).

To our knowledge the first result on finiteness (and rationality) of differential syzygies
(in the case of free un-constrained pseudogroup actions) is due to \cite{OP} in the micro-local case.
We generalize it to the global context.

 \begin{theorem}\label{refSyz}
The D-module $\mathfrak{syz}_\infty^G$ of differential syzygies is finitely-generated, i.e.\ there exist
a finite number of polynomials $Q_k\in\mathfrak{syz}_\infty^G$ such that any other syzygy $Q=
 % is a linear combination
\sum\square_kQ_k$ for some invariant differential operators $\square_k\in\mathfrak{Diff}(\E,\t)$.
 \end{theorem}

Before proving the theorem, let us recast it into the language of differential algebra, which
significantly simplifies the proof.

Let us change the generating set of $\A$ so
that the invariant derivations $\nabla_j$ commute. This is always possible to do in $\E''$ according to
Remark \ref{Tresse-always}: we can find a collection of differential invariants $f_1,\dots,f_s$,
in addition to $I_1,\dots,I_t$, such that $\nabla_j=\hat{\p}/\hat{\p}f_j$
(equality as for derivations, i.e.\ modulo $\a^\vee$)
for $j=1,\dots,s$ in the notations of the proof of Theorem \ref{tHmT}.

Notice that passing from a general basis of derivations $\{\nabla_j\}$ to a basis of commuting derivations
is equivalent to quotient by the Maurer-Cartan relations above, and this does not change
the finite generation property.

Consider now the jet-space $J^\infty(\R^s(x),\R^t(y))$ with coordinates $x^j$ on the base and
the jet-coordinates $y^i_J=\D_J(y^i)$ and let $\mathcal{P}$ be the algebra of polynomials on it.
This algebra is filtered by $\mathcal{P}_k$ -- the algebra of polynomials on $J^k$.
We map $\Psi:\mathcal{P}\to\A$ so: $\Psi(x^j)=f_j$, $\Psi(y^i)=I_i$ and we extend the map
as a differential homomorphism. Then $\D_{x^j}$ corresponds to $\nabla_j$ and so
$\Psi(y^i_J)=\nabla_J(I_i)$. We identify
 $$
\mathfrak{syz}_\infty^G=\op{Ker}(\Psi)\subset\mathcal{P}.
 $$

 \begin{proof}[Proof of Theorem \ref{refSyz}]
Denote $\mathcal{S}_k=\op{Ker}(\Psi)\cap\mathcal{P}_k$ and
$\mathcal{S}=\cup_k\mathcal{S}_k=\mathfrak{syz}_\infty^G$.
Because $\Psi$ is a homomorphism, commuting with derivations
(meaning $\Psi\circ\D_{x^j}=\nabla_j\Psi$), this $\mathcal{S}$ is a filtered ideal
in $\mathcal{P}$ and it is D-stable: $\D_J(\mathcal{S}_k)\subset\mathcal{S}_{k+|J|}$.
Moreover it is radical because the algebra $\A$ has no zero divisors.
In the language of \cite{Ri} $\mathcal{S}$ is an algebraic differential perfect ideal.

Thus by the Ritt-Raudenbush differential algebra version of the Hilbert basis theorem \cite{Ri}
(see also Malgrange's version of Cartan-Kuranishi theorem \cite{Ma}),
$\mathcal{S}$ is finitely generated as a D-module (now D means over the algebra of
$\mathcal{C}$-differential operators in the $(x,y)$-space):
there exist polynomials $Q_1,\dots,Q_p\in\mathcal{S}$
(defining the decomposition into prime ideals)
such that any other syzygy $Q\in\mathcal{S}$ is a linear combination
$\sum\square_kQ_k$ for some differential operators in total derivatives
$\square_k=\sum b^k_\z\D_\z$, $b^k_\z\in\mathcal{P}$.

Alternatively we could prove the claim by generating every $\mathcal{S}_k$.
On a finite level existence of a finite basis follows from the classical Hilbert's basis theorem.
Increasing $k$ above $l$ we notice that the basic syzygies can be taken affine in the
highest derivatives (similarly as $\A_k$ is generated over $\A_{k-1}$ by the invariants
affine in $k$-th order jets).
The stabilization of singularities happens similarly to Theorem \ref{trma},
and the symbolic behavior of the syzygies is controlled by the same symbolic
sequence for differential invariants (in fact by the dual) as in \S\ref{S24}
 $$
0\to\mathfrak{d}_k\to\mathfrak{d}_{k-1}\ot\t^*\to\mathfrak{d}_{k-2}\ot\Lambda^2\t^*\to\dots
 $$
Vanishing of the first cohomology $H^{k,1}(\mathfrak{d})=0$, $k\ge l$,
means that the symbols of $k$-th order differential syzygy is given by the
combination of the symbols of the syzygies of order $<k$,
and the lower order terms are then reconstructed recursively.
 \end{proof}

\smallskip

The theorem gives us an exact sequence of algebras
 $$
0\to\mathfrak{syz}_\infty^G\longrightarrow\mathcal{P}\longrightarrow\A\to0,
 $$
where $\mathcal{P}$ is the free graded differential algebra with $s$ derivations and $t$ dependent
variables. Finite generation property of the (first) syzygy module means that it is an
epimorphic image of a free graded differential algebra $\mathcal{P}'$, which is an algebra of
polynomials on another jet-space $J^\infty(\R^{s'}(x'),\R^{t'}(y'))$:
$\mathfrak{syz}_\infty^G=\Psi'(\mathcal{P}')$. Then we obtain the second syzygy module as the kernel of $\Psi'$:
 $$
0\to\mathfrak{syz}_{\infty,(2)}^G\longrightarrow\mathcal{P}'\longrightarrow\mathcal{P}\longrightarrow\A\to0.
 $$
This ideal in $\mathcal{P}'$ is again finitely generated as a D-module.
Then we can compute the third syzygy module $\mathfrak{syz}_{\infty,(3)}^G$ etc
and establish finiteness of all higher $G$-invariant syzygies (i.e.\ relations between relations etc).
The derived sequence will be a free resolution of the differential algebra $\A$.

\smallskip

We can interpret the finite generation property of Theorem \ref{refSyz} as
a $G${\it-equivariant version of the Cartan-Kuranishi theorem\/}: The module of syzygies is
a system of differential equations on differential invariants, where the dependent variables are 
$I_1,\dots,I_t$ and derivations are $\nabla_1,\dots,\nabla_s$ (in the above notations, when 
$\nabla_j=\hat{\p}/\hat{\p}f_j$, then $f_1,\dots,f_s$ play the role of independent variables). 
The second syzygies are interpreted as the compatibility conditions for these differential equations.

To see this, let us assume (for simplicity), as in the above proof, that the generators are chosen so that $\nabla_j$
commute. Then $\mathcal{S}=\mathfrak{syz}_\infty^G\subset\mathcal{P}$.
Let $J^k(\R^s(x^*),\R^t(y^*))=(J^k(\R^s(x),\R^t(y)))^*$ and $\mathcal{Q}_k$ be its subset
annihilated by $\mathcal{S}_k$. Denote
$J^\infty(\R^s(x^*),\R^t(y^*))=\lim\limits_{\to}  J^k(\R^s(x^*),\R^t(y^*))\supset
\mathcal{Q}=\lim\limits_{\to}\mathcal{Q}_k$.
We have the natural projections $\pi_{i,j}^\mathcal{Q}:\mathcal{Q}_i\to\mathcal{Q}_j$ for $i>j$,
and the image of $\pi_{i,0}$ in $J^0$ is a nonempty Zariski open set.

 \begin{theorem}
Outside a finite codimension set of singularities $\mathcal{Q}$ is a formally integrable
system of differential equations, called the quotient equation.
 \end{theorem}

 \begin{proof}
Over $\CC$ the statement is an easy corollary of Hilbert's Nullstellensatz: since there
exists 1-1 correspondence between radical polynomial ideals and affine varieties (of their zeros),
the formal integrability follows from D-closedness of the ideal $\mathcal{S}$ (recall from the former proof
that it is radical, $\mathcal{S}=\sqrt{\mathcal{S}}$). Finite generation property
of $\mathcal{S}$ yields finite codimension of the singularity set in $\mathcal{Q}$.

Over $\R$ the claim follows from Dubois' real Nullstellensatz \cite{D} (see also \cite{Ris}):
the ideal of the zero variety of the ideal $\mathcal{S}$ is the real radical $\sqrt[R]{\mathcal{S}}$.
The latter can be defined as the space of all $f\in\mathcal{P}$ such that $f^m(1+\sum k_iu_i^2)\in\mathcal{S}$
for some integer $m>0$, reals $k_i>0$ and functions $u_i\in\mathcal{P}$.
Applying $\Psi$ we get $\Psi(f)(1+\sum k_i\Psi(u_i)^2)=0$, which implies $f\in\mathcal{S}$.
Thus our syzygy ideal is real radical $\mathcal{S}=\sqrt[R]{\mathcal{S}}$
and so the ideal of the family of varieties $\mathcal{Q}_k$
is D-closed. This implies $\mathcal{Q}_{k+1}\subset\mathcal{Q}_k^{(1)}$
(outside singularities) and the theorem is proved.
 \end{proof}

\smallskip

We can also consider the relations in the Lie algebra $\mathfrak{Der}(\E,\t)$
of differential syzygies. It is finitely generated by the commutator relations (Maurer-Cartan equations
for the pseudogroup) and the differential syzygies $\mathfrak{syz}_\infty^G$ .

%%%%%%%%%%%%%%%%%%%%%%%%%%%%%%%%%%%%%%%%%%%%%%%%%%%%%%%%%%%%%%%%%%%%%%%%%%
\subsection{Asymptotic of the dimensions: Arnold's conjecture}\label{S35}

The growth of the dimensions of the space of differential invariants is
an important invariant characterizing the freedom in the equivalence problem.

Recall that the value of $\dim{\frak d}_k$ at generic point of $\E^k$
counts the number of independent differential invariants of pure order $k$.
It equals $\dim Y^k-\dim Y^{k-1}$, where the number $\dim Y^k$ is the
transcendence degree of the field $\mathfrak{R}(\E^k_a)^{G^k_a}$
(maximal number of independent differential invariants of order $\le k$).

The Hilbert function
 $$
H_G^\E(k)=\dim{\frak d}_k
 $$
is a polynomial for large $k\gg1$ by virtue of Theorem \ref{refpro}. We extend it to
the polynomial $P_G^\E(z)$, $z\in\CC$. Introducing in the same way
$H_\E(k)=\dim g_k$, $H_{G|\E}(k)=\dim\varpi_k$ ($k\gg1$),
the exact sequence of the first column (\ref{130300}) implies that
 \begin{equation}\label{PGE}
H_G^\E(z)=H_\E(z)-H_{G|\E}(z).
 \end{equation}
Moreover, following \cite{KL$_2$} we can write the Hilbert polynomial % for the number of micro-local
for differential invariants via the Lie equation for the pseudogroup $G$ and its stabilizer $\op{St}$
(the terms are defined similarly to the above):
 $$
H_G^\E(z)=H_\E(z)-H_G(z)+H_{\op{St}}(z).
 $$

Let $d_\E$ be dimension of the affine complex characteristic variety of $\E$ and $c_\E$ be its degree.
Then (\ref{PGE}) implies that for some $\tilde{d}\le d_\E$ (and $\tilde{c}\le c_\E$ if $\tilde{d}=d_\E$)
 $$
H_G^\E(z)=(c_\E\,z^{d_\E}+\dots)-(\tilde{c}\,z^{\tilde{d}}+\dots)
 $$
and in particular $H_G^\E$ is a polynomial of degree $d\le d_\E$ (the latter estimate explains why
we can restrict to $s\le d_\E$ invariant derivations).

This instantly implies rationality of the Poincar\'e function\footnote{Notice that the Hilbert and Poincar\'e
functions are related by $H^\E_G(k)=\frac1{k!}\left.\frac{d^k}{dz^k}\right|_{z=0}P^\E_G(z)$.}
that counts moduli of the equivalence problem (the series converges for $|z|<1$)
 $$
P_G^\E(z)= \sum_{k=0}^\infty \dim{\frak d}_k\cdot z^k,
 $$
answering in affirmative the question of V.\,Arnold \cite[Problem 1994-24]{A}
(cf. \cite{Sar} for the un-constrained micro-local case) on the open stratum of $\E$:

 \begin{theorem}\label{thm15}
For a transitive action of an algebraic pseudogroup $G$ on $\E$,
the Poincar\'e function $P_G^\E(z)$ is equal to $\dfrac{R(z)}{(1-z)^{d+1}}$ on $\E''_\infty$
for some polynomial $R(z)$ and the same number $d$ as above. \qed
 \end{theorem}

Arnold asked in general at which points $a_\infty\in\E^\infty$
the Poincar\'e series for the moduli\footnote{Some people interpret these as the
micro-local differential invariants, but we believe that
the moduli are meant to be the actual global invariants.}
of the problem $\sum_{k=0}^\infty \dim{\frak d}_k(a_\infty)\cdot z^k$ is rational.
This function on $\E^\infty$ with values in formal power series
depends discontinuously on $a_\infty$, and the generic stratum $\E''_\infty$ is where it is
continuous (constant in $a_\infty$ but not in $z$). As demonstrated in the last example of Section \ref{S4} the behavior
at the singular strata can be much more complicated.

 \begin{remark}
We can also study growth of the $G$-moduli for other geometric objects:
tensors, differential operators etc. Similarly, the Poincar\'e series, corresponding to
any of these structures, is rational on the generic stratum.
 \end{remark}

Notice that the Poincar\'e functions of the equation
$P_\E(z)= \sum_{k=0}^\infty \dim g_k\cdot z^k$,
and the orbit spaces $P_{G|\E}(z)= \sum_{k=0}^\infty \dim\varpi_k\cdot z^k$
are related to the Poincar\'e function of the moduli: $P_G^\E(z)=P_\E(z)-P_{G|\E}(z)$.

\medskip

{\bf Example from Section \ref{S25} revisited.} In this example with the pseudogroup
$G=\op{Diff}_\text{loc}(\R)\ltimes C^\infty(\R)$ acting on $M=\R^3$ we have constructed the differential invariants
and invariant derivations. In this case the singularities are stabilized on the jet-level $l=2$,
$S_\infty=\pi_{\infty,2}^{-1}(S_2)$, but the differential invariants can be taken polynomial in jets of
order $>0$, so that $\A=\mathfrak{P}_0(\E)^G$ (in addition the invariants are rational on the fiber of the
map $J^0\pi=\R^3(x,y,u)\to\R^2(x,y)$).

The algebra $\mathcal{A}$ of differential invariants on $\E=J^\infty\pi$ is generated by $I_1$ and $\nabla_1,\nabla_2$.
The differential syzygies among them are generated by the relations
 $$
\nabla_1I_1=0,\ [\nabla_1,\nabla_2]=\frac{I_{3a}}{I_2}\nabla_2-\frac{I_{3b}}{I_2}\nabla_1
 $$
(these in turn imply $[\nabla_1,\nabla_2]I_2=0\Leftrightarrow \nabla_2I_{3a}=I_{4b}=\nabla_1I_{3b}$).

The transcendence degree of the field of rational differential invariants of order $\leq k$ is equal to 
$1+1+2+3+\dots+(k-1)=1+{k\choose2}$ (this is just the number of independent differential invariants; we have exactly $(k-1)$ of those of pure order $k$), and so the Poincar\'e function is
 \[
P^\E_G(z)=z+z^2+2z^3+3z^4+\dots=\frac{z(z^2-z+1)}{(z-1)^2}.
 \]

To obtain the quotient equation of \S\ref{S34} we pass from the invariant derivations $(\nabla_1,\nabla_2)$
to the Tresse derivatives associated to the invariants $(I_1,I_2)$:
 $$
\frac{D}{DI_1}=-\frac{I_{3b}}{I_2I_{3a}}\nabla_1+\frac1{I_2}\nabla_2,\quad
\frac{D}{DI_2}=\frac1{I_{3a}}\nabla_1
 $$
(the coefficients are found by computing the action on $I_1,I_2$).

Denote $I_1=\xi$, $I_2=\eta$, $I_{3a}=v$, $I_{3b}=w$, so that $\frac{D}{DI_1}=\partial_\xi$,
$\frac{D}{DI_2}=\partial_\eta$. Applying the above Tresse derivatives to $v,w$ we get 4 equations
involving $I_{4a},I_{4b},I_{4c}$. Excluding these 3 parameters we obtain the
(undetermined) differential equation
 $$
\eta\,v_\xi=v\,w_\eta-w\,v_\eta.
 $$
It is point equivalent (by the transformation $U=\frac{-1}v$, $V=\frac{w}v$, $X=\xi$, $Y=\frac12\eta^2$)
to the differential equation
 $$
U_X=V_Y,
 $$
and this is our quotient equation $\mathcal{Q}$.
Its Poincar\'e function is equal to
 $$
P_\mathcal{Q}(z)=2+3z+4z^2+5z^3+\dots=\frac{2-z}{(1-z)^2}.
 $$

%%%%%%%%%%%%%%%%%%%%%%%%%%%%%%%%%%%%%%%%%%%%%%%%%%%%%%%%%%%%%%%%%%%%%%%%%%
\section{Examples and counter-examples}\label{S4}

In this section we give new examples illustrating importance of our assumptions,
and discuss how to calculate the algebra of scalar differential invariants.

%%%%%%%%%%%%%%%%%%%%%%%%%%%%%%%%%%%%%%%%%%%%%%%%%%%%%%%%%%%%%%%%%%%%%%%%%%
\subsection{On calculation of differential invariants}\label{S41}

There are two approaches to calculate differential invariants of a pseudogroup action.
Both are essentially microlocal, as one uses the Lie algebra sheaf while the other
the local pseudogroup (a germ of unity).

The first method is related to solution of the Lie equation $L_{\hat X}(I)=0$, where
$X\in\mathcal{G}$ is the element of the Lie algebra sheaf of $G$ and $I$ a function on $\E$.
This is a linear PDE, and in some cases with additional symmetry it can be effectively solved
for lower order invariants. The equation for invariant derivations is also linear, so if
these two steps are resolved, the Lie-Tresse theorem will yield the algebra $\A$.
But in general to find the solutions is a difficult task.

The other approach is to use parametrization of the pseudogroup (the method of moving frames)
and to exclude the parameters along the orbit, to obtain the expressions of differential
invariants. The calculations here are more algorithmic as the action is affine in the
highest derivatives, so provided the arising algebraic equations can be solved, the algebra
$\A$ can be computed. However there are restrictions of the method: the group $G$ shall be
nicely parametrized
 % (which is not the globally case, for instance, with the symplectic and contact pseudogroups),
and the action has to be locally free (the standard technical assumption).

To calculate the global rational-polynomial invariants any combination of these methods
can be applied, with an essential input coming from the constructive invariant theory. Indeed,
the action of $G$ prolonged to the $k$-jets reduces to an algebraic action of the Lie group
$G_a^k$ on the algebraic manifold $\E_a^k$. One searches for rational-polynomial
 % (smooth by base)
invariants of this action.

This is still a difficult but much more feasible task, because due to Theorem \ref{Thm2} we are looking
for invariants in the smaller space of rational-polynomial differential functions. In the first approach
the rational solutions of linear PDEs can be found by applications of the methods of differential algebra
and differential Galois theory. In the second approach the Gr\"obner basis methods are useful.

 \begin{remark}
The method of constructing differential invariants from a cross-section became popular
recently \cite{OP}. Globally it is restrictive as not all pseudogroups admit such sections
(whence the assumption of freeness of the action). Even on the level of finite jets,
for the action of algebraic Lie groups the section may not exist. However there always exists a
quasi-section (an irreducible subvariety intersecting a generic orbit in finite number of points),
see \cite[Proposition 2.7]{PV}. These can be used for constructive approach to differential invariants.
 \end{remark}

Let us notice that, according to the approach of Sections \ref{S23}, \ref{S33},
the invariant derivations can be also constructed via the invariant theory.
The construction suggests that invariant derivations can appear
lower in jet-level than a set of differential invariants with independent
horizontal differentials (yielding the Tresse derivatives).
This is actually observed in computations \cite{OP,KL$_4$,Man}.

 % le Rotta theorem, transcendence degree and interpretation of roots ..

\smallskip

Yet another scenario occurs. Sometimes it is difficult to compute all generators of the algebra $\A$,
but one can find some set of differential invariants and a basis of invariant derivations
(for instance, as Tresse derivatives of the first $n$ invariants). If the separation property
holds for the obtained collection, it can be used to generate the entire algebra of differential invariants.

 \begin{theorem}
Let rational differential invariants $\{I_\a\}_{\a\in A}$ separate the $G$-orbits in $\E^\infty$.
Then they generate (by the usual algebraic operations) the entire algebra $\mathcal{A}$
of differential invariants.
 \end{theorem}

 \begin{proof}
Let $\tilde{\mathcal{A}}_k\subset\mathcal{A}_k$ be the subalgebra generated (as indicated) by
$\{I_j\}$ in jet-order $\le k$. This is a subalgebra of usual invariant functions
$\mathcal{O}(\E''_k)^G$ separating generic $G$-orbits, so by the categorical property
(see Lemma 2.1 \cite{PV}) it generates $\mathcal{A}_k$.
The claim follows because $k$ is arbitrary.
 \end{proof}

\smallskip

This statement is useful in applications, when a priori some high order differential invariants
separating the generic $G$-orbits are known (for instance on the basis of the
principle of $n$ invariants \cite{ALV}). Then also the lower order invariants
will be obtained via syzygies of the initial differential invariants
(even though invariant differentiations raise the order of invariants, their combinations can result in a
differential invariant of a lower order).

%%%%%%%%%%%%%%%%%%%%%%%%%%%%%%%%%%%%%%%%%%%%%%%%%%%%%%%%%%%%%%%%%%%%%%%%%%
\subsection{Examples of calculations}\label{S42}

As we mentioned in the Introduction most of the classical examples are
related to calculation of micro-local differential invariants. To establish equivalence
it is necessary that they are preserved, but it is not always sufficient.

The first non-trivial equivalence result involving an infinite pseudogroup
was the classification of 2nd order ODEs with respect to point transformations.
This was initiated by S.\,Lie and R.\,Liouville and essentially finished by A.\,Tresse \cite{Tr$_2$},
see an overview in \cite{Kr}. In the latter reference the invariants are written
via rational generators, and this implies solution of the global equivalence problem
(for non-singular ordinary differential equations of order 2).

Differential invariants for some other problems involve algebraic roots, and
so can be re-written via rational generators. For instance the celebrated E.\,Cartan's 
5-variables paper \cite[p.170]{C$_2$} contains a relative invariant
$I$ such that $I^4$ is a bona fide differential invariant
(this is how it is claimed in loc.cited, but in fact already $I^2$ is an invariant \cite{AK}).
More on this will be said in the next section.

In some purely algebraic problems differential invariants turn out useful, see \cite{Ol$_2$}.
It is important to use global invariants (as algebraic classifications are always global).
On this way the classical equivalence problem for binary and ternary forms was solved
in \cite{BL}, and a more general problem on equivalence under an irreducible algebraic action
of a reductive Lie group can be also solved.

Let us discuss some other examples illustrating our conditions and methods.

\smallskip

 {\bf Example.} Consider the flex equation $\E$ that appeared in \cite{GL}:
% as the condition of the foliation $\{u=\op{const}\}$ being totally geodesic with respect to the flat connection:
 $$
u_y^2u_{xx}-2u_xu_yu_{xy}+u_x^2u_{yy}=0.
 $$
The group $G=\op{SL}_3\times\op{Diff}(\R)$ acts on $\E$ by symmetries, where the first factor
corresponds to projective transformations of $B=\R^2(x,y)$ and the second factor
corresponds to invertible changes of the dependent variable $u\mapsto U(u)$.

Here the characteristic variety at the point $a_1=(x,y,u,u_x,u_y)$ (since the equation is
quasi-linear $\op{Char}^\CC(\E,a_2)\subset{}^\CC\mathbb{P}T^*B$ does not depend
on a choice of $a_2\in F(a_1)$) is given by the linear equation
 $$
\op{Char}^\CC(\E,a_1)=\{[p_x:p_y]\,|\,u_yp_x=u_xp_y\}\subset \mathbb{CP}^1.
 $$
However even though this variety belongs to a hypersurface (in our case: a point),
there is no such horizontal direction at $a_0=(x,y,u)$ that annihilates all
characteristics at all points $a_1\in F(a_0)$. And this implies existence of 2 invariant
derivatives.
To simplify the argument let us change $G$ to its normal subgroup (second component) $G_2=\op{Diff}(\R)$;
the differential invariants of $G$ are then $G_1$-invariant differential invariants of $G_2$.

For the pseudogroup $G_2$ we have: the invariant derivatives are $\D_x,\D_y$ and
the basic differential invariant is $w=u_x/u_y$
with the only relation (differential syzygy which is the flex equation written via $w$) -- the
equation of gas dynamics
 \begin{equation}\label{gasdyn}
w_x=w\,w_y.
 \end{equation}
To include the first component $G_1=G/G_2=\op{SL}_3$ we re-write the Lie algebra of the first component
via $w$. Here are the generators of $\g_1=\op{Lie}(G_1)$:
 \begin{multline*}
\p_x,\ \p_y,\ x\,\p_x-w\,\p_w,\ y\,\p_y+w\,\p_w,\ x\,\p_y-\p_w,\
y\,\p_x+w^2\p_w,\\
x^2\p_x+xy\,\p_y-(xw+y)\,\p_w,\ xy\,\p_x+y^2\p_y+(xw^2+yw)\,\p_w.
 \end{multline*}
Thus we have an action of a finite-dimensional Lie group on the equation (\ref{gasdyn})
and the validity of the Lie-Tresse theorem in such situation is known.

For completeness let us provide the exact formulae for differential invariants. They are
micro-locally generated by
 $$
I_6=\frac{9w_2^3w_6-200w_3^4-72w_2^2w_3w_5+300w_2w_3^2w_4-45w_2^2w_4^2}{R^{4/3}}
 $$
and
 $$
\nabla_1=\frac{6w_2}{R^{1/3}}\D_y-\frac{w_2w_4-(4/3)w_3^2}{w_2^2R^{1/3}}(\D_x-w\D_y)
 $$
where $R=9w_2^2w_5-45w_2w_3w_4+40w_3^3$ and $w_i=\D_y^i(w)$.

The module of invariant derivatives $\mathfrak{D}(\E,\t)$ is generated by $\nabla_1$ and
 $$
\nabla_2=\frac{R^{1/3}}{w_2^2}(\D_x-w\D_y)
 $$
The last derivative belongs to $\a_H^\vee$ and so is trivial as a derivation,
i.e.\ it does not generate new invariants (it is a Cauchy characteristic), so that
$\mathfrak{Der}=\langle\nabla_1\rangle$.
It is interesting to note that $\nabla_1(\op{mod}\nabla_2)$ coincides with the classical
Study derivative from the theory of curves on the projective plane, see \cite{St} and \cite[\S42]{Kl}.
The global rational differential invariants are generated by $I_6^3$ and $I_6^{-1}\nabla_1$.

 % there is also one differential syzygy ...

\smallskip

 {\bf Example.} Consider the natural action of the pseudogroup $G=\op{Diff}_\text{loc}(\R)\times\R\times\R$
on $\R^3=\R^2(x,y)\times\R^1(u)$. The corresponding Lie algebra sheaf $\mathcal{G}$ is transitive and has generators
 $$
f(x)\,\p_x,\ \p_y, \p_u.
 $$
For the action of $G$ on un-constrained jets $J^\infty(\R^2)$ there are two invariant derivatives
$\frac1{u_x}\D_x,\D_y$ and the basic differential invariant $u_y$, so that
the algebra of differential invariants
$\mathfrak{P}_1^G=\langle u_y,u_{xy}/u_x,u_{yy},\dots\rangle$ is generated by them.

The differential equation $\E=\{u_x=0\}\subset J^\infty(\R^2)$ is $G$-invariant, and
the global Lie-Tresse theorem applies.
But for the action of $G$ on $\E$ there is only one invariant derivative
$\D_y$ (together with its multiples by differential invariants).
The algebra of differential invariants $\A=\mathfrak{P}_1(\E)^G=\langle u_y,u_{yy},u_{yyy},\dots\rangle$
is freely generated by $\D_y$ and $u_y$.

 \begin{remark}
This example shows that the number of invariant derivatives needs not to be $n=\dim\t$.
 \end{remark}
On the other hand there can be more invariant derivatives than one can expect from the characteristic 
variety of $\E$, and as in the previous example we get $\mathfrak{D}(\E,\t)\supsetneq\mathfrak{Der}(\E,\t)$.
Indeed, if we consider the subgroup $\R^3\subset G$ of translations, then $\D_x$
is an invariant derivative on $\E$, but it acts trivially on $\mathcal{A}$.

\smallskip

 {\bf Example.} Consider now a bigger pseudogroup $G$ on $\R^3=\R^2(x,y)\times\R^1(u)$
with the transitive Lie algebra sheaf $\mathcal{G}$ given by
 $$
f(x,y)\,\p_x,\ \p_y, \p_u.
 $$
This algebra acts transitively in the complement to the equation $\E=\{u_x=0\}$,
so there are no differential invariants. There are however invariant horizontal fields
in accordance with Theorem \ref{Thmn} (of course it is not possible to find them neither
as derivations, as the latter are trivial, nor as Tresse derivatives). They form a 2-dimensional
commutative Lie algebra (indeed, any third field should be a linear combination of these
with coefficients being differential invariants)
 $$
\mathfrak{D}(J^\infty,\t)=\Bigl\langle\frac1{u_x}\D_x,\D_y-\frac{u_y}{u_x}\D_x\Bigr\rangle.
 $$
Restriction to $\E$ yields non-trivial algebra of differential invariants
$\A=\mathfrak{P}_1(\E)^G=\langle u_y,u_{yy},u_{yyy},\dots\rangle$, it is generated almost in
the same way as above, except that we shall take $[\D_y]=\D_y\mod\langle\D_x\rangle$ as an invariant
derivation. This is the only invariant derivation up to multiplication by a differential invariant,
$\mathfrak{Der}=\langle[\D_y]\rangle$.

But it is invariantly defined only as the equivalence class -- there are no invariant
horizontal vector fields at all $\mathfrak{D}(\E,\t)=0$ !

\smallskip

 {\bf Example.} The Liouville equation $u_{xy}=e^u$ is automorphic: the pseudogroup
of symmetries $G=\op{Diff}_\text{loc}(\R)\times\op{Diff}_\text{loc}(\R)$ generated by
the transformations $x\mapsto X(x)$, $y\mapsto Y(y)$ acts transitively on the space of solutions
(the corresponding Lie algebra sheaf embedded into $\mathfrak{D}(J^0)$, $J^0=\R^2(x,y)\times\R(u)$
is generated by $a(x)\p_x-a'(x)\p_u$, $b(y)\p_y-b'(y)\p_u$ for arbitrary functions $a(x),b(y)$).
This means that there are no differential invariants: $\mathcal{A}=\R$.

It is easy to check that there are no invariant derivatives (horizontal vector fields) as well.
 % Indeed, assume $F\D_x$ is an invariant derivative. It shall commute with
 % $(a(x)\p_x-a'(x)\p_u)^{(\infty)}=a(x)\D_x-\op{Ev}_{(a(x)u)_x}$, whence
 % $a\D_x(F)=\op{Ev}_{(a(x)u)_x}F+a'(x)F$. The left hand side has higher order than the right hand side,
 % so $F=F(x,y)$ and $aF_x=a'(x)F$. Because this equality holds for arbitrary $a(x)$, we conclude $F=0$.
This is in agreement with Theorem \ref{Thmn} since its both assumptions fail here: $\mathcal{A}$ is
not infinite-dimensional and the Liouville equation is not ample.

%%%%%%%%%%%%%%%%%%%%%%%%%%%%%%%%%%%%%%%%%%%%%%%%%%%%%%%%%%%%%%%%%%%%%%%%%%
\subsection{Roots in differential invariants}\label{S43}

By our main theorem we can use only rational functions in description of
global differential invariants. However books are full of examples
of differential invariants with algebraic roots, see e.g. \cite{Th,Ol$_1$,Ol$_2$,KJ,KL$_4$,Man}.

This visible contradiction can be resolved in two ways. First of all many of the
wide-spread invariants with roots are not invariants in the global sense, but only
micro-local invariants. Equivalently they are invariants of the Lie algebra (satisfy
the linear PDE $L_X(I)=0$) or invariants of the local Lie group (where the size of the
neighborhood of unity in $U\subset G$, for which $g^*I(a_\infty)=I(a_\infty)$ $\forall g\in U$,
depends on the point $a_\infty\in\E^\infty$). See the example
from the Introduction.

Another possibility is that the roots are in proper place, but the classification deals
with coverings in global problem like orientations or spin. Action of quotients of
algebraic groups which are not themselves algebraic can lead to similar problems.
Let us explain appearance of roots in two examples.

\smallskip

 {\bf Example.}
Consider the action of the proper motion group $E(2)_+=SO(2)\ltimes\R^2$ on the space of oriented
curves in $M=\R^2$. The action lifts to the space of jets $J^\infty(M,1)$.
We can chose local coordinates on $M$ such that $dx$ gives positive orientation and $dy$
positive co-orientation on the graph-curves $y=y(x)$. These induce canonical coordinates
$(x,y,y_1,y_2,\dots)$ in the open chart $J^\infty(\R,\R)\subset J^\infty(M,1)$,
and we write the curvature and the invariant derivation
 \begin{equation}\label{SO}
\kappa=\dfrac{y_2}{(1+y_1^2)^{3/2}},\qquad\ \dfrac{d}{ds}=\dfrac1{\sqrt{1+y_1^2}}\dfrac{d}{dx}.
 \end{equation}
Both are invariant with respect to $E(2)_+$ and they generate the algebra of differential invariants.

 % Since the line is the canonical representative for its 1-jet, we can identify the fiber over $a_1$
 % with $S^2T^*\ot\nu$ and consequently get the canonical tensor $y_2(dx)^2\ot\p_y\mod\nu$.
 % Coupling thus with $(\dfrac{\p_x+y_1\p_y}{\sqrt{1+y_1^2}})^2\ot(\dfrac{dy-y_1dx}{\sqrt{1+y_1^2}})$
 % we get the curvature $K$. The normalized tangent vector $\dfrac{\p_x+y_1\p_y}{\sqrt{1+y_1^2}}$ gives
 % rise to the invariant derivation (the unit normal is $\vec n=\dfrac{y_1\p_x-\p_y}{\sqrt{1+y_1^2}}$ ).

If the curves are not orientable or we act by the entire motion group $E(2)=O(2)\ltimes\R^2$
(both conditions imply that the curves are not co-orientable), then $\kappa$ is not an invariant
and we shall change the above pair of generators to
 \begin{equation}\label{O}
\kappa^2=\dfrac{y_2^2}{(1+y_1^2)^3},\qquad\ \nabla=\kappa\dfrac{d}{ds}=\dfrac{y_2}{(1+y_1^2)^2}\dfrac{d}{dx}.
 \end{equation}

It seem that (\ref{O}) satisfies but (\ref{SO}) contradicts our version of Lie-Tresse theorem.
Of course, (\ref{O}) are also rational invariants of $E(2)_+$ action, but they do not distinguish
orbits (for instance, the upper and lower half-circles both oriented from left to right). What is the problem?

To understand this consider the stabilizer $S^1=SO(2)\subset E(2)_+$ of the point $(0,0)\in M$.
It acts on the fiber of the projection $J^2\to M$, which is $S^1\times\R$ in the case of oriented curves
and is $\R P^1\times\R$ in the case of non-oriented curves. The action is $(\phi,h)\mapsto(\phi+t,h)$
and so $h\in\R$ is (a rational) invariant.

However this is the case of the jet-space $J^2(M,1)$, while in the affine chart the action writes
$(y_1,y_2)\mapsto(\frac{y_1\cos t-\sin t}{y_1\sin t+\cos t},\frac{y_2}{(y_1\sin t+\cos t)^3})$,
$t\in S^1$. The affine chart consists of one piece $\R P^1\setminus\{1\}$ in the non-oriented case,
but it has two pieces $S^1\setminus\{\pm1\}$ in the oriented case. Since the two half-circles
(= two lines) constitute a reducible variety, there is no contradiction with our Lie-Tresse theorem.
In fact on one component (that's to say e.g. $dx$ gives the orientation)
we can use the rational generators (\ref{O}) and they do separate orbits.

\smallskip

 {\bf Example.} Another occurrence of roots is the classical algebraic problem of characterizing
quadrics on the plane:
 \begin{equation}\label{quadriC}
u=ax^2+2bxy+cy^2+2dx+2ey+f=0.
 \end{equation}
The group $G=E(2)\times\R^*$ acts on the plane $(x,y)$ by the first component, and on the function $u$
(quadric) by the second (rescalings).
The invariants are (in \cite{PV} the cubic roots of these are given, but then they are ill-defined
over $\CC$)
 $$
I_1=\frac{(ac-b^2)^3}{\Delta^2},\ I_2=\frac{(a+c)^3}\Delta,\qquad \text{ where }\
\Delta=\begin{vmatrix}a & b & d \\ b & c & e \\ d & e & f\end{vmatrix}.
 $$
We can obtain the invariants also through the action of $G$ on the Monge equation $\E$
characterizing quadrics (as functions $y=y(x)$)
 $$
V=y_5y_2^2-5y_2y_3y_4+\tfrac{40}9{y_3}^3=0.
 $$
 \begin{remark}
This $V$ together with $U=y_2$ are the basic projective relative differential invariants of the curves
on the projective plane. The basic absolute invariant $R\cdot V^{-8/3}$ (with $R=U^4V\cdot y_7+\dots$
a differential polynomial of order 7 \cite{H}, see also \cite{KL$_4$})
in this case contains the cubic root, but can be changed to the global rational differential invariant
$R^3/V^8$.
 \end{remark}

Indeed, the action has 3 (micro-local) differential invariants (they contain roots
-- which can be eliminates as above -- but will be of temporal use)
 $$
\kappa=\frac{y_2}{(1+y_1^2)^{3/2}},\
\kappa'=\frac{d}{ds}\kappa,\
\kappa''=\frac{d^2}{ds^2}\kappa;\qquad
\frac{d}{ds}=\frac1{\sqrt{1+y_1^2}}\frac{d}{dx}.
 $$
They satisfy the following differential syzygy (our equation $\E$)
 $$
\kappa'''=5\,\frac{\kappa'\kappa''}{\kappa}-\frac{40}9\frac{\kappa'^3}{\kappa^2}-4\kappa^2\kappa'
 $$
with the solution given implicitly $s=\int(C_1\kappa^{8/3}+C_2\kappa^{10/3}-9\kappa^4)^{-1/2}d\kappa$.
Extracting constants (first integrals) from this expression we get two invariants of the
solution space $\op{Sol}(\E)$:
 $$
j_1=\frac{3\kappa\kappa''-5\kappa'^2+9\kappa^4}{\kappa^{8/3}},\quad
j_2=\frac{3\kappa\kappa''-4\kappa'^2+18\kappa^4}{\kappa^{10/3}}.
 $$
Expressed in jet-coordinates they equal
 $$
j_1=\frac{3y_2y_4-5y_3^2}{y_2^{8/3}},\quad j_2=
\frac{3y_1^2y_2y_4-4y_1^2y_3^2-6y_1y_2^2y_3+9y_2^4+3y_2y_4-4y_3^2}{y_2^{10/3}}.
 $$
The bona fide rational invariants of this algebraic problem are $J_1=j_1^3$, $J_2=j_2^3$
(they separate orbits and correspond to the previous invariants $I_1$, $I_2$,
 % up to some constant multiples
when $y$ is expressed through $x$ from (\ref{quadriC})),
while roots in the above formulae are the results of intermediate calculations.

%%%%%%%%%%%%%%%%%%%%%%%%%%%%%%%%%%%%%%%%%%%%%%%%%%%%%%%%%%%%%%%%%%%%%%%%%%
\subsection{Non-algebraic situation}\label{S44}

If we drop the requirement of algebraic action, some of the results continue to hold.
Namely the prolonged action is affine and so algebraic in higher jets. Thus the
finite-generation property will hold over a neighborhood in finite jets, where we have
the property for the corresponding Lie group action.

However the separation property for the orbits can fail, so that the algebra of differential
invariants will distinguish only between the closures of the orbits.

\smallskip

 {\bf Example.}
Consider the action of the 5-dimensional Lie group $G=\R^4\rtimes\R^1$ on 
$M=\R^4(x^1,x^2,x^3,x^4)$ with the Lie algebra $\g=\langle\p_{x^1},\dots,\p_{x^4},\xi\rangle$, where $\xi=(x^2\p_{x^1}-x^1\p_{x^2})-\ll(x^4\p_{x^3}-x^3\p_{x^4})$
and $\ll$ is a generic irrational number.
 % to be specified later.

The action is transitive and the stabilizer group $G_x$ is one-dimensional. We embed $G$ into the diffeomorphism
group of $\R^5(x^1,x^2,x^3,x^4,u)$ or equivalently let $G$ act on the space $J^\infty(M)$ of jets
of functions on $M$. The space $J_x^k(M)$ has coordinates $u_\z$ with the multi-index $\z$
of length $|\z|\le k$.

The first of them $u=u_\emptyset$ is obviously an invariant\footnote{This makes the group action intransitive,
but it is irrelevant for this example. Simply add the 6th generator $\p_u$ to $\g$, or observe that
all our constructions work without this.} of $G$. On the jet-level $k=1$
we have 3 more differential invariants: $I_1'=u_1^2+u_2^2$, $I_1''=u_3^2+u_4^2$ and $I_1'''=\arctan(u_4/u_3)+\ll\arctan(u_2/u_1)$.
The last invariant $I_1'''$ is however micro-local and has to be omitted in the list of global invariants.

Geometric explanation of the absence of one invariant in the global sense is the following:
$T^*=\R^4=\R^2(u_1,u_2)\times\R^2(u_3,u_4)$ has an invariant foliation by tori which are
the product of concentric circles in the factors, and the orbit of $\xi$ on every torus is
an irrational winding with the slope $\ll$, whence the closure of almost every orbit is 2-dimensional.

Thus already on this step we observe that the algebra of micro-local invariants differs from
the algebra of global differential invariants. Also notice that the omitted non-global
invariant $I_1'''$ is not a rational function and this is in agreement with the fact that
the field $\xi\in\g$ is not a replica in the sense of Chevalley \cite{Ch} and $G$ is not algebraic.

Consider the prolongations $\xi^{(k)}$ of the last vector field of $\g$. This field represent the action of $G$
in $J^k_x=\oplus_0^k S^kT^*$, where $T=T_xM$. It is semi-simple, and every summand is invariant.
Restriction to $T$ has the purely imaginary spectrum $\op{Sp}(\xi^{(1)}|T)=\{\pm i,\pm\ll\,i\}$, and consequently the spectrum on
$S^jT$ is the $j$-multiple Minkowski sum of $\op{Sp}(\xi^{(1)}|T)$ with itself (elements in the sum enter with multiplicity):
$\op{Sp}(\xi^{(k)}|J^k_x)=\{\pm s\,i\pm \ll\,t\,i\,|\,0\le s,t\le s+t\le k\}$ (some elements enter with multiplicity).

In complex coordinates $z=x_1+ix_2$, $w=x_3+ix_4$ the first order invariants are $I_1'=|u_z|^2$ and $I_1''=|u_w|^2$.
The higher order invariants are
 $$
I_{pqrs}=\frac1{u_z^pu_{\bar z}^qu_w^ru_{\bar w}^s}\frac{\p^{p+q+r+s}u}{\p z^p\p\bar z^q\p w^r\p\bar w^s},\quad
p+q+r+s\ge2
 $$
(to get real invariants one has to take the real and complex parts unless $p=q$, $r=s$; also we can let $p\le q$
and $r\le s$ if $p=q$). The algebra of differential invariants $\A$ is generated by these,
and has the following invariant derivatives
(again to stay in the real category one separates the real and imaginary parts):
 $$
\nabla_z=\frac1{u_z}\D_z,\ \nabla_{\bar z}=\frac1{u_{\bar z}}\D_{\bar z},\
\nabla_w=\frac1{u_w}\D_w,\ \nabla_{\bar w}=\frac1{u_{\bar w}}\D_{\bar w}.
 $$
Then $\A$ is finitely generated by $I_1',I_1'',I_{1100},I_{1010},I_{0011}$ and
$\nabla_z,\nabla_{\bar z},\nabla_w,\nabla_{\bar w}$ (the minimal set of generators is
$I_1',I_1''$ and $\nabla_z,\nabla_{\bar z},\nabla_w,\nabla_{\bar w}$ since 6 of the 10 second order
differential invariants occur as the coefficients of the commutators of the invariant derivatives).

Thus we see that the Lie-Tresse theorem holds, though the invariants from the algebra
$\A$ do not separate the $G$-orbits.

%%%%%%%%%%%%%%%%%%%%%%%%%%%%%%%%%%%%%%%%%%%%%%%%%%%%%%%%%%%%%%%%%%%%%%%%%%
\subsection{Singular systems}\label{S45}

In classification problems we describe the differential invariants characterizing
the regular orbits. Complement to the regular set consists of singular orbits, but can be stratified
into invariant equations $\E(\a)$, for each of which the $G$-action produces its own algebra
of differential invariants. Provided the equation is regular, irreducible and the action is transitive,
the Lie-Tresse theorem applies in our global version, and the description of this algebra is finite.

However, some singularities can be more complicated, for instance if the projection of the equation
to $M$ is not surjective or the action of $G$ is not transitive, and for them the theorem could fail.
To illustrate this we discuss an example of conservative vector fields with isolated singularities.
On the formal level the latter set is a subset of jets with fixed point
$\E\subset J^\infty_0(\R^{2n},\R^{2n})$, and so can be considered only as a generalized equation
(a subset in the space of jets that does not project onto the base).

\medskip

 {\bf Example.}
Consider the equivalence problem for Hamiltonian systems near
a non-dege\-ne\-rate linearly stable equilibrium point (the point is isolated and
we take it to be 0). In this equivalence problem the pseudogroup consists of
local diffeomorphisms of the germ of 0.

Equivalently the problem is this: the pseudogroup $G$ consists of germs of
symplectic diffeomorphisms of $(\R^{2n},\oo_0)$ preserving 0. Its subgroup acts on the
equation $\E$ consisting of the germs of functions $H$ vanishing at 0 to order 2
with the operator $A_H=\oo^{-1}d^2_0H$ having purely imaginary spectrum.
In addition, we assume that $\op{Sp}(A_H)=\{\pm i\ll_1,\dots,\pm i\ll_n\}$
has no resonances of any order.

The equation is indeed singular, as it is projected to $J^1(\R^{2n})$ to one point,
$\E\subset\pi_{2,1}^{-1}(0)$. But in addition, the condition of absence of resonances
is non-algebraic (but the closure of $\E$ is semi-algebraic).

We change, as usual, germs to jets to get the formal normal form of $H$. This is
the well-known Birkhoff normal form \cite{B}:
there are canonical coordinates (Darboux coordinates for the symplectic form $\oo_0$)
$x_1,y_1,\dots,x_n,y_n$ such that with the notations $\tau_i=x_i^2+y_i^2$,
$\tau^\z=\t_1^{\z_1}\cdots\t_n^{\z_n}$, the infinite jet of $H$ is
 $$
j_0^\infty(H)= \sum a_\z\tau^\z.
 $$
The coefficients $a_\z$ are the differential invariants of the problem.

One can write them in jet coordinates, though the formulae are rather complicated.
For instance, for $n=1$ the first invariant $a_1$ corresponds to the Hessian
 $$
I_2=H_{11}H_{22}-H_{12}^2
 $$
(we write $x^1,x^2$ instead of $x_1,y_1$ and denote derivatives by the subscripts). The next differential
invariant is
 \begin{multline*}
I_4=(3H_{22}H_{2222}-5H_{222}^2)H_{11}^3+\bigl(-3H_{12}^2H_{2222}+(30H_{122}H_{222}-12H_{22}H_{1222})H_{12}\\
+6H_{22}^2H_{1122}+(-9H_{122}^2-6H_{112}H_{222})H_{22})H_{11}^2+\Bigl(12H_{12}^3H_{1222}+\\
(-24H_{112}H_{222}+6H_{22}H_{1122}-36H_{122}^2)H_{12}^2+
\bigl(-12H_{22}^2H_{1112}+(54H_{122}H_{112}\\
+6H_{222}H_{111})H_{22}\bigr)H_{12}+3H_{22}^2(-3H_{112}^2-2H_{122}H_{111}+H_{22}H_{1111})\Bigr)H_{11}\\
-12H_{12}^4H_{1122}+(36H_{122}H_{112}+12H_{22}H_{1112}+4H_{222}H_{111})H_{12}^3-\\
3H_{22}(12H_{112}^2+H_{22}H_{1111}+8H_{122}H_{111})H_{12}^2
+30H_{22}^2H_{12}H_{112}H_{111}-5H_{22}^3H_{111}^2
 \end{multline*}
and we get 1 new invariant in any even order. Thus the Lie-Tresse derivative does not exist
in this case (otherwise it would produce invariants in any sufficiently high order).

One might get an impression that there is an invariant 2nd order differential operator
(which does the job of the usual invariant derivative in this case),
and indeed the operator
 \begin{multline*}
\Delta=H_{22}\D_1^2-2H_{12}\D_1\D_2+H_{11}\D_2^2+\\
I_2^{-1}(-H_{11}H_{122}H_{22}+H_{12}H_{11}H_{222}-H_{22}^2H_{111}+3H_{12}H_{112}H_{22}-2H_{12}^2H_{122})\D_1\\
\quad +I_2^{-1}(-2H_{12}^2H_{112}+3H_{12}H_{122}H_{11}+H_{111}H_{22}H_{12}-H_{11}H_{22}H_{112}-H_{11}^2H_{222})\D_2
 \end{multline*}
satisfies the identity
 $$
[\Delta,\hat X_F]|_\E=0
 $$
for any function $F\in C^\infty(\R^2)$ vanishing at 0 to the 2nd order ($X_F$ is the Hamiltonian field
with the generating function $F$).

 \begin{remark}\label{rkbtt}
The symbol of $\Delta$ is canonical: using $\oo_0$ it is identified with the operator $A_H$,
and further raising of the indices yields the Hessian 2-form $d_0^2H$.
 \end{remark}
However the restrictions of the fields from the pseudogroup $\hat X_F$ to $\E$
do no commute with $\Delta$, and this is the reason this operator does not map
$I_{2k}\to I_{2k+2}$.

 \begin{prop}
There does not exist a differential operator which maps the algebra of differential invariants to itself.
 \end{prop}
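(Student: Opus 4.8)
The plan is to run the argument at the symbolic level, exploiting the very special grading of the algebra of invariants in this example. By the explicit Birkhoff normal form the coefficients $a_\z$ with $|\z|=k$ occupy jet-order $2k$, so new differential invariants appear only in even orders. In the notation of \S\ref{S22}, $\mathfrak d_j=(\T_j/\T_{j-1})^*$ is the space of symbols of order-$j$ invariants, so $\mathfrak d_j=0$ for every odd $j$; in fact $\dim\mathfrak d_{2k}$ equals the number of $\z$ with $|\z|=k$, which is $1$ when $n=1$. Consequently the "Tresse symbol'' map $\d^*:\mathfrak d_k^*\ot\t\to\mathfrak d_{k+1}^*$ of (\ref{tresse}) vanishes identically --- for $k$ even its target is zero, for $k$ odd its source is zero --- that is, the symbolic $S\t$-module $\bigoplus_j\mathfrak d_j^*$ of differential invariants is annihilated by the augmentation ideal $S^+\t$. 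This degeneracy of the symbolic module is the structural fact driving the whole proof.

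Now suppose, for contradiction, that $\nabla$ is a differential operator in total derivatives of positive order $m$ with $\nabla(\mathcal A)\subset\mathcal A$. Take a differential invariant $f$ of exact order $k$. Its principal symbol lies in $\mathfrak d_k^*$, the principal symbol of $\nabla$ is an element of $S^m\t$, and the top-order component of $\nabla f$ has class in $\mathfrak d_{k+m}^*$ equal to the image of the product under the composite $S^m\t\ot\mathfrak d_k^*\to\mathfrak d_{k+m}^*$, an $m$-fold iterate of $\d^*$, hence zero. Therefore $\nabla f\in\mathcal A_{k+m-1}$. Bootstrapping this across the intermediate orders --- using that $\d^*$ vanishes in \emph{every} degree and that $\mathfrak d^*$ is concentrated in even orders --- one obtains $\nabla(\mathcal A_k)\subset\mathcal A_k$ for all $k$: $\nabla$ cannot raise the jet-order of any differential invariant. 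Hence $\nabla$ produces no invariant not already contained in the algebra generated by the one it is applied to, so it cannot serve as an invariant derivation and cannot, together with any finite collection of invariants, generate $\mathcal A$ in the Lie--Tresse sense; this is exactly the failure asserted by the Proposition, and in particular it excludes $\Delta$ and every operator of its kind.

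The main obstacle is to make the bootstrap rigorous when $\nabla$ has coefficients of high jet-order: if $\nabla=\sum_\z c_\z\D^\z$ and some $c_\z$ depends on jets of order exceeding $k+|\z|$, then the top-order behaviour of $c_\z\D^\z f$ is governed by $c_\z$ rather than by $\D^\z f$, and the clean symbolic identification of the symbol of $\nabla f$ breaks down. The natural remedy is first to reduce to the case where the $c_\z$ are themselves differential invariants --- which is forced once one imposes $\nabla(\mathcal A)\subset\mathcal A$ against sufficiently many test invariants --- and then to induct on the order of the coefficients. A second, independent ingredient handles $n\ge2$: the growth $\dim(\mathcal A_{2k}/\mathcal A_{2k-2})\asymp k^{n-1}$, i.e. $d=\deg P_G^\E=n-1<n$ (cf. Theorem \ref{thm15}), already shows that no finite set of invariants and bounded-order operators can generate $\mathcal A$, since their iterates yield only polynomially-many-in-order invariants of insufficient growth rate; this isolates the case $n=1$, where the parity/$\d^*$-vanishing argument above is the essential one.
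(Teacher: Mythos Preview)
Your argument has a genuine gap at its core. The factorization of the symbol map
\[
S^m\tau\otimes\mathfrak d_k^*\longrightarrow\mathfrak d_{k+m}^*
\]
through the intermediate $\mathfrak d_{k+1}^*$ (which you then declare to be zero by parity) is only available if $\mathfrak d^*$ is an $S\tau$-\emph{submodule} of $g^*$, equivalently if the Spencer map $\delta:\mathfrak d_{k+1}\to\mathfrak d_k\otimes\tau^*$ is well-defined, equivalently if $\varpi$ is a symbolic system: $\varpi_{k+1}\subset\varpi_k^{(1)}$. In the transitive setting of \S\ref{S2} this is guaranteed by Propositions~\ref{prp3}--\ref{prp4}, whose proofs use the horizontal lift $L(a_{k+1})$ over a genuine $n$-dimensional base. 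But the Hamiltonian example of \S\ref{S45} is precisely a \emph{singular} situation with a zero-dimensional base (a single equilibrium point), and here $\varpi$ is \emph{not} a symbolic system. Concretely, for $n=1$: since there are no invariants of pure order~$3$ we have $\varpi_3=g_3=S^3T^*$, whereas $\varpi_2\subsetneq g_2$ (because of $I_2$), so $\varpi_2^{(1)}\subsetneq g_3=\varpi_3$. In plain terms, $\D_iI_2$ is an honest order-$3$ function; its symbol $v_i\cdot\mathrm{symb}(I_2)$ is nonzero in $g_3^*$ and does \emph{not} land in $\mathfrak d_3^*=0$. So the crucial step ``an $m$-fold iterate of $\delta^*$, hence zero'' fails already for $m=1$, and with it the bootstrap.

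This is not a technicality you can patch by assuming the coefficients $c_\z$ are invariants or by the growth count in your last paragraph; the whole symbolic module picture from \S\ref{S22}--\S\ref{S24} is unavailable here, which is exactly why the example is placed in \S\ref{S45} as a counterexample. The paper's proof proceeds quite differently: it decomposes a putative second-order operator $\Delta=\Delta_2+\Delta_1+\Delta_0$, observes that the principal symbol of $\Delta_2$ must be a $G$-invariant section of $S^2\tau$ (hence a multiple $\lambda$ of the canonical Hessian form), and then applies $\Delta$ to $f(I_2)$. The chain rule produces a term $f''(I_2)\sum a_{ij}\D_iI_2\,\D_jI_2$, whose coefficient is forced to be a differential invariant of order~$3$; there are none, so $\lambda=0$. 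The remaining first-order part is then killed by the same ``invariant symbol must vanish'' argument. This route never appeals to $\mathfrak d^*$ being an $S\tau$-module and works directly in the singular setting.
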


 \begin{proof}
Let us discuss the case of the 2nd order, the general case is similar.

The space of differential invariants of order $\le 2k$ is $\A_{2k}=C^\infty(I_2,\dots,I_{2k})$
(here we can equally well consider the spaces of smooth or rational or polynomial functions).
If the operator exists it maps $\Delta:\A_{2k}\to\A_{2k+2}$.

In particular, for $k=1$ we get the following. Let us decompose $\Delta=\Delta_2+\Delta_1+\Delta_0$,
where $\Delta_i$ are differential operators of pure order $i$ (working in coordinates such a splitting
is possible). Then $\Delta(1)=\Delta_0$, so this latter is a differential invariant, and can be omitted.
The symbol of $\Delta_2=\sum a_{ij}\D_i\D_j$ is also an invariant, and so is a multiple of the operator
from Remark \ref{rkbtt}: $a_{ij}=(-1)^{i+j}\ll\,H_{3-i,3-j}$. Then we have:
 $$
\Delta f(I_2)=f'(I_2)\Delta(I_2)+f''(I_2)\sum a_{ij}\D_i(I_2)\D_j(I_2).
 $$
This implies that the third order expression $\sum a_{ij}\D_i(I_2)\D_j(I_2)$ is a differential invariant
(in general, we shall also consider the case $\op{ord}(\ll)>3$ but this leads to the same result
by restricting to larger $k$), which is impossible unless $\ll=0$.

Thus $\Delta=\Delta_1$, but there are no invariant differential operators of order 1 because its symbol would
have been invariant and hence zero (alternatively: it would have mapped  $\A_{2k}\to\A_{2k+1}=\A_{2k}$).
 \end{proof}

\smallskip

For $n>1$ the algebra of differential invariants of order $\le k$ is
$\A_{2k}=C^\infty(I_2^{i_1},I_4^{i_1i_2},\dots,I_{2k}^{i_1\dots i_k})$ ($1\le i_j\le n$ numerate different
differential invariants in different orders), and we get a similar result. Hence we conclude:

 \begin{cor}
The space of differential invariants $\A_\infty$ of
formal Hamiltonian systems at a non-dege\-nerate linearly stable non-resonant
equilibrium point is not finitely generated, even in the generalized Lie-Tresse sense.
 \end{cor}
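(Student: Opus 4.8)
The plan is to argue by contradiction, combining the structural fact extracted above from the Birkhoff normal form with the preceding Proposition. First I would record the input: taking $n=1$ for concreteness (the case $n>1$ is handled in the same way), the coefficient $a_m$ of the Birkhoff series is a differential invariant of pure jet order $2m$, and the invariants $I_2,I_4,\dots,I_{2m},\dots$ are algebraically independent over $C^\infty(M)$, since they are independent functions of the distinct higher jet-coordinates. Consequently $\mathcal{J}_k:=\mathcal{J}_\infty\cap\mathcal{A}_k$ increases strictly along the even orders, so $\mathcal{J}_\infty\ne\mathcal{J}_M$ for every finite $M$; in particular new invariants appear only in even orders, i.e.\ $\mathcal{A}_{2k+1}=\mathcal{A}_{2k}$.

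Next, suppose for contradiction that $\mathcal{J}_\infty$ were finitely generated in the generalized Lie--Tresse sense by invariants $I_1,\dots,I_t$ together with invariant derivations (or, more generally, invariant differential operators in total derivatives) $\nabla_1,\dots,\nabla_s$. If $s=0$ this already contradicts the algebraic independence above, so assume $s\ge1$. Let $N$ be an even integer bounding the jet order of all the $I_i$ and of all coefficients of the $\nabla_j$. I claim that $\nabla_J(I_i)\in\mathcal{J}_N$ for every multi-index $J$. This is proved by induction on $|J|$: if $g=\nabla_J(I_i)\in\mathcal{J}_N$, then $\nabla_j(g)$ has jet order at most $N+1$ (a first-order $\mathcal{C}$-derivation raises the order by at most one, and its coefficients have order $\le N$), and it is again a differential invariant; but $N+1$ is odd, so $\mathcal{A}_{N+1}=\mathcal{A}_N$ and hence $\nabla_j(g)\in\mathcal{J}_N$. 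For invariant differential operators of order $\ge2$ the analogous statement --- that they cannot carry $\mathcal{J}_N$ onto the genuinely new invariants of higher order --- is precisely the content of the Proposition (such an operator would have an invariant symbol, hence a vanishing one). In all cases the filtration level $N$ is preserved.

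It follows that every element of $\mathcal{J}_\infty$, being a rational function of the invariants $\nabla_J(I_i)$, has jet order at most $N$; that is, $\mathcal{J}_\infty=\mathcal{J}_N$, contradicting the first paragraph. Hence $\mathcal{J}_\infty$ is not finitely generated, even in the generalized Lie--Tresse sense, and the Corollary follows.

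I expect the main obstacle to be the filtration-preservation step for general invariant operators: for derivations it is immediate from the observation that new invariants occur only in even orders, but to cover the full generalized Lie--Tresse setup one must rule out invariant differential operators of higher order producing unbounded-order invariants, which is exactly why the Proposition (and not merely the nonexistence of invariant derivations) is needed; some care is also required to check that the rational operations allowed in the Lie--Tresse representation do not raise the jet order.
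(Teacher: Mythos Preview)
Your proposal is correct and follows essentially the same route as the paper: the corollary is deduced directly from the Proposition, since any putative generating operators $\nabla_j$ mapping $\mathcal{J}$ to itself are ruled out (or forced to be of order~$0$) by that Proposition, and then the infinite independent family $\{I_{2k}\}$ cannot be reached from finitely many $I_i$. Your parity argument for first-order derivations---that $\nabla_j(\mathcal{J}_N)\subset\mathcal{J}_{N+1}=\mathcal{J}_N$ because no invariants live in odd order---is exactly the ``alternatively'' remark at the end of the Proposition's proof, so the two approaches coincide.
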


Let us also notice that the Poincar\'e function of this equivalence problem
 $$
P_G^\E(z)=\sum_{k=0}^\infty\binom{n+k-1}{k}z^{2k}=\frac1{(1-z^2)^n}
 $$
does not satisfy the conclusion of Theorem \ref{thm15}. This is often the case with
classification problems arising in singularity theory.

%%%%%%%%%%%%%%%%%%%%%%%%%%%%%%%%%%%%%%%%%%%%%%%%%%%%%%%%%%%%%%%%%%%%%%%%%%
\section{Conclusion}\label{S5}

Differential invariants can be interpreted as invariant non-linear differential operators.
The problem of their description and significance was discussed by O.\,Veblen in the
International Congress of Mathematicians 1928 \cite{Ve}.

The structure of the algebra of differential invariants encodes the structure of the
pseudogroup and its action. It plays the same role in local differential geometry as the
invariant theory in the classical algebra. With this in mind we can informally summarize
the main results of the present paper as follows:
 \begin{quote}
{\it Hilbert and Rosenlicht theorems allow to treat the quotient of an algebraic variety by an algebraic
group action as an algebraic variety. Our global version of the Lie-Tresse theorem allows to treat
the quotient of an algebraic differential equation by an algebraic pseudogroup action as an
algebraic differential equation.}
 \end{quote}
The meaning of the latter is explained in Section \ref{S34}.
 %(here we left aside the technical issues related to singularities).

Calculations of differential invariants have several applications:
equivalence problem, homogeneity detection, image recognition, variational calculus,
computer vision, numerical computing, relativity theory, fluid dynamics and many more \cite{ALV,Ol$_3$}.
Cartan's equivalence method can be considered as an important approach to compute differential invariants,
but can in turn be obtained as a partial case of the general differential invariants theory \cite{Va}.

Differential invariants provide vital tools in integration of differential equations.
Traditionally related to the symmetry methods (and establishing exact solutions),
they find usage in the transformation theory (e.g.\ Laplace and Darboux transformation),
variational bi-complex (and conservation laws), higher symmetries and commuting flows (Lax pairs).
Our approach encompasses not only the groups of point symmetries, but also contact symmetries and
in some cases internal symmetries (the setup can require extension of the base manifold).

We almost ignored the computational aspect of the theory (see \S\ref{S41}). This uses
several algebraic methods (including Gr\"obner basis and differential algebra,
see \cite{HK}). In practice symbolic packages are helpful. For some calculations in
Section \ref{S4} we used \textsc{Maple} package \textsl{DifferentialGeometry} by I.\,Anderson.

Our examples were chosen simple to illustrate the power and limitation
of the main results. More substantial calculations will appear in future publications.

%%%%%%%%%%%%%%%%%%%%%%%%%%%%%%%%%%%%%%%%%%%%%%%%%%%%%%%%%%%%%%%%%%%%%%%%%%

 \vspace{-3pt} \hspace{-20pt} {\hbox to 12cm{ \hrulefill }}
\vspace{3pt}

{\footnotesize \hspace{-10pt} Institute of Mathematics and
Statistics, University of Troms\o, Troms\o\ 90-37, Norway.

\hspace{-10pt} E-mails: \quad \textsc{boris.kruglikov@uit.no}, \quad
\textsc{valentin.lychagin@uit.no}.} \vspace{-1pt}

\end{document}